\documentclass{amsart}

\usepackage{enumerate,mathdots,mathrsfs,verbatim,fullpage,amssymb}

\numberwithin{equation}{section}

\newtheorem{prop}{Proposition}[section]
\newtheorem{theorem}[prop]{Theorem}
\newtheorem{cor}[prop]{Corollary}
\newtheorem{lemma}[prop]{Lemma}

\theoremstyle{definition}
\newtheorem{defn}[prop]{Definition}
\newtheorem{example}[prop]{Example}

\theoremstyle{remark}
\newtheorem{rem}[prop]{Remark}

\newcommand{\A}{\mathscr{A}}

\newcommand{\C}{\mathbb{C}}

\newcommand{\F}{\mathcal{F}}
\newcommand{\G}{\mathcal{G}}
\renewcommand{\H}{\mathcal{H}}
\newcommand{\HS}{\mathcal{HS}}
\newcommand{\K}{\mathcal{K}}
\newcommand{\M}{\mathcal{M}}
\newcommand{\N}{\mathbb{N}}

\newcommand{\R}{\mathbb{R}}

\newcommand{\Norm}[1]{\left\Vert #1 \right\Vert}

\newcommand{\caret}{\char`\^}

\renewcommand{\subset}{\subseteq}
\renewcommand{\supset}{\supseteq}

\DeclareMathOperator{\tr}{tr}

\DeclareMathOperator{\mult}{mult}
\DeclareMathOperator{\card}{card}
\DeclareMathOperator{\spn}{span}
\DeclareMathOperator{\ran}{ran}
\DeclareMathOperator{\rank}{rank}

\title{Frames generated by compact group actions}
\author{Joseph W.\ Iverson}
\address{Department of Mathematics, University of Oregon, Eugene, OR 97403--1222, USA}
\email{iverson@uoregon.edu}
\date{\today}

\keywords{compact group, frame, invariant subspace, range function, translation-invariant space, unitary representation, Zak transform}
\subjclass[2010]{Primary: 42C15, 43A77, 47A15, Secondary: 22D10, 43A32}

\begin{document}

\begin{abstract}

Let $K$ be a compact group, and let $\rho$ be a representation of $K$ on a Hilbert space $\H_\rho$. We classify invariant subspaces of $\H_\rho$ in terms of range functions, and investigate frames of the form $\{\rho(\xi) f_i\}_{\xi \in K, i \in I}$. This is done first in the setting of translation invariance, where $K$ is contained in a larger group $G$ and $\rho$ is left translation on $\H_\rho = L^2(G)$. For this case, our analysis relies on a new, operator-valued version of the Zak transform. For more general representations, we develop a calculational system known as a \emph{bracket} to analyze representation structures and frames with a single generator. Several applications are explored. Then we turn our attention to frames with multiple generators, giving a duality theorem that encapsulates much of the existing research on frames generated by finite groups, as well as classical duality of frames and Riesz sequences.

\end{abstract}

\maketitle

This paper is an investigation of the interplay between frame theory and representations of compact groups. Broadly speaking, we are interested in two related questions about a unitary representation $\rho$ of a compact group $K$: 
\begin{center}
(1) What are the invariant subspaces of $\rho$?
\end{center}
and 
\begin{center}
(2) For which families $\A$ of vectors in the representation space is  the orbit 

$\{ \rho(\xi) f : \xi \in K, f \in \A\}$ a frame?
\end{center}
These questions are related in the following way. Often, the vectors $\{\rho(\xi) f: \xi \in K, f\in \A\}$ do not span the entire representation space, in which case they can only form a frame for their closed linear span. That span is precisely the invariant subspace generated by $\A$. In the most general setting, we will use frame theory to answer the first question, and representation theory to answer the second. 

The overarching theme of this paper is that frame theory and representation theory share deep connections. By this we mean much more than the prominence of reproducing systems associated with group actions. As we will see, many of the standard tools of frame theory give vital information about the structure of  representations. In Section 4, for instance, we develop an analogue of the \emph{bracket map}, which found its first use in the study of multiresolution analysis \cite{JM}. It turns out that the bracket carries information about the isotypical components of a representation and the multiplicities of irreducibles, and in many cases can be used to test a purported cyclic vector. In the final section, we give a complete description of the invariant subspaces of an arbitrary representation of a compact group, and explain how to use one irreducible decomposition to classify all such decompositions. The main tool for both of these applications is essentially the analysis operator. We did not go out looking for these results, but stumbled into them where they lay directly in the path of our investigation of frame properties. 

Many of the prototypical examples of frames, including wavelets and Gabor systems, are associated with group actions \cite{DGM}. Frames of the form described in (2), which occupy the full orbit of a vector family, are particularly nice. We will call these objects \emph{group frames}, and say the vectors in $\A$ are \emph{generators}. Examples range from the continuous wavelet transform, which is associated with an action of the $ax+b$ group \cite{GrMo,K}, to harmonic frames, which come from actions of finite abelian groups \cite{VW2}. The reproducing properties of group frames are often greatly simplified by the aid of the representation. When there is a single generator $f$, for instance, the frame operator $S$ lies in the commutant of $\rho$. This means that when one wants to reproduce a vector $g$ with the formula 
\[ g = \int_K \langle g, S^{-1} \rho(\xi) f \rangle \rho(\xi) f\, d\xi, \]
there is no need to compute $S^{-1} \rho(\xi) f$ for every $\xi \in K$. It suffices to compute $S^{-1} f$ and then observe that
\[  \langle g, S^{-1} \rho(\xi) f \rangle = \langle \rho(\xi^{-1}) g, S^{-1} f \rangle. \] 
Group frames are made using the natural symmetries of the representation space, and as a consequence they often combine utility, beauty, and simplicity. In Example \ref{ex:PermFrm}, for instance, we explain how to make a unit norm tight frame for $\C^n$ consisting of $n!$ vectors, just by permuting the entries of a single vector. 

\medskip

The paper is split into three parts, each of which can be read more or less independently from the others. The first part, Sections \ref{sec:Zak}--\ref{sec:tranFrm}, investigates questions (1) and (2) for actions of compact groups by translation. Let $G$ be a second countable locally compact group, and let $K \subset G$ be a compact subgroup. The purpose of these sections is to describe the structure of closed subspaces of $L^2(G)$ which are invariant under left translation by $K$. We call these spaces \emph{$K$-invariant}. Our first major development occurs in Section \ref{sec:Zak}, where we introduce an operator-valued analogue of the Zak transform, generalizing a classical construction of Weil \cite{W2,W} and Gelfand \cite{G}. It forms the basis for much of our subsequent analysis. In Section \ref{sec:ranTran}, we make our first mention of \emph{range functions}, which make several appearances throughout the paper. We use range functions to classify $K$-invariant subspaces of $L^2(G)$, and explore this correspondence in depth. This line of thinking comes to a culmination in Section \ref{sec:tranFrm}, where we give precise conditions for a family of functions in $L^2(G)$ to generate a frame via left translation by $K$.

The second part of the paper, Sections \ref{sec:brack} and \ref{sec:brackApp}, describes a symbolic calculus for the analysis of representations of compact groups. We introduce an operator-valued version of the bracket map first developed for the study of principle shift-invariant spaces by Jia and Micchelli \cite{JM}, and subsequently generalized for actions of locally compact abelian (LCA) groups by Weiss and his collaborators \cite{HSWW2}, then by a variety of authors in other settings \cite{BHM,BHP,BHP3}. Our main result, Theorem \ref{thm:brackFrm}, gives the frame properties of the orbit of a cyclic vector in terms of the eigenvalues of the bracket. We develop basic properties of the bracket in Section \ref{sec:brack}. Several of these show the bracket carries vital information about the structure of the representation itself. Section \ref{sec:brackApp} contains a host of applications: classification of group frames with a single generator, block diagonalization of the Gramian operator, disjointness properties, and several new examples of frames, including a generalization of harmonic frames for nonabelian groups.

The third part, Section \ref{sec:multGen}, is dedicated to group frames with multiple generators. Here we mimic the program of Sections \ref{sec:Zak}--\ref{sec:tranFrm} for an arbitrary representation $\rho$ of a compact group $K$, assuming only that we know how to decompose $\rho$ as a direct sum of irreducible subrepresentations. We classify the invariant subspaces of $\rho$ using range functions and a sort of analysis operator, then describe every possible decomposition of the representation space as a direct sum of irreducible invariant subspaces. The capstone of this section, and the culmination of the entire paper, is the duality result in Theorem \ref{thm:multFrm2}. Three decades since the start of the wavelet revolution, we still do not have a satisfactory answer for a very simple question:
\begin{center}
(2') Given a locally compact group $G$, a unitary representation $\pi \colon G \to U(\H_\pi)$, and a family of vectors $\A \subset \H_\pi$, under what circumstances is the orbit $\{ \pi(x) f : x \in G, f \in \A\}$ a frame?
\end{center}
Theorem \ref{thm:multFrm2} answers this question for representations of compact groups using a simple duality statement. Our result unifies classical duality of frames and Riesz sequences with, among other things, the pioneering work of Vale and Waldron \cite{VW2,VW,VW3}, and the well-known result that the orbit of a nonzero vector under an irreducible representation of $K$ always forms a tight frame. We hope that this theorem, and many of the other ideas in this paper, will give some clues for subsequent research on group frames.

\medskip

%ZAK TRANSFORM==============================================================================
\section{The Zak transform of a compact subgroup} \label{sec:Zak}

In Sections \ref{sec:Zak} -- \ref{sec:tranFrm}, $G$ is a second countable locally compact group (not necessarily abelian), and $K\subset G$ is a compact subgroup. Our main result is the existence of an operator-valued Zak transform on $L^2(G)$ that treats left translation by $K$ in a manner similar to the Fourier transform on $L^2(K)$. This operator will form the basis for our classification of $K$-invariant subspaces of $L^2(G)$ in Section \ref{sec:ranTran}, and for our analysis of frames formed by $K$-translates in Section \ref{sec:tranFrm}. 

The reader may consult \cite{F,HR2} for background on compact groups and their representations. We record a few of the basics here. Throughout the paper, we normalize Haar measure on $K$ so that $|K| = 1$. The left and right translates of $f\colon K \to \C$ by $\xi \in K$ are denoted $L_\xi f$ and $R_\xi f$, respectively. That is,
\[ (L_\xi f)(\eta) = f(\xi^{-1} \eta), \quad (R_\xi f)(\eta) = f(\eta \xi) \qquad (\eta \in K). \]
We give $L^2(K)$ the usual convolution and involution, namely
\[ (f * g)(\xi) = \int_K f(\eta) g(\eta^{-1} \xi)\, d\eta \qquad (f,g\in L^2(K);\ \xi \in K) \]
and
\[ (f^*)(\xi) = \overline{f(\xi^{-1})} \qquad (f \in L^2(K),\ \xi \in K). \]
These operations make $L^2(K)$ a Banach $*$-algebra. 

The dual object of $K$ is $\hat{K}$; it has one representative of each equivalence class of irreducible unitary representations of $K$. Each $\pi \in \hat{K}$ acts on a finite dimensional space, which we denote $\H_\pi$. Its dimension is $d_\pi = \dim \H_\pi$. The \emph{Fourier transform} of $f\in L^2(K)$ evaluated at $\pi \in \hat{K}$ is the operator
\[ \hat{f}(\pi) = \int_K f(\xi) \pi(\xi^{-1})\, d\xi \in B(\H_\pi), \]
where the integral is to be interpreted in the weak sense. For our purposes, the utility of the Fourier transform lies in the formulae
\begin{equation} \label{eq:FourTrans}
(L_\xi f)\caret(\pi) = \hat{f}(\pi) \pi(\xi^{-1}), \quad (R_\xi f)\caret(\pi) = \pi(\xi) \hat{f}(\pi) \qquad (f \in L^2(K),\ \xi \in K,\ \pi \in \hat{K})
\end{equation}
and
\begin{equation}\label{eq:FourConv}
(f^*)\caret(\pi) = \hat{f}(\pi)^*, \quad (f * g)\caret(\pi) = \hat{g}(\pi) \hat{f}(\pi) \qquad (f,g \in L^2(K);\ \pi \in \hat{K}).
\end{equation}
If $B(\H_\pi)$ is treated as a Hilbert space with inner product $\langle A, B \rangle = d_\pi \langle A,B \rangle_{\HS} = d_\pi \tr(B^*A)$, the Fourier transform may be viewed as a unitary
\[ \F \colon L^2(K) \to \bigoplus_{\pi \in \hat{K}} B(\H_\pi), \qquad \F f = (\hat{f}(\pi))_{\pi \in \hat{K}}. \]
This is called \emph{Plancherel's Theorem}. When an orthonormal basis $e_1^\pi,\dotsc,e_{d_\pi}^\pi \in \H_\pi$ is chosen for each $\pi \in \hat{K}$, we define the \emph{matrix elements} $\pi_{i,j} \in C(K)$ by
\[ \pi_{i,j}(\xi) = \langle \pi(\xi) e_j^\pi, e_i^\pi \rangle \qquad (\pi \in \hat{K};\ \xi \in K;\ i,j = 1,\dotsc,d_\pi). \]
In other words, the matrix for $\pi(\xi)$ with respect to the chosen basis is $(\pi_{i,j}(\xi))_{i,j=1}^{d_\pi}$. For $f \in L^2(K)$, the $(i,j)$-entry of the matrix for $\hat{f}(\pi)$ over this basis is 
\[ \hat{f}(\pi)_{i,j} =  \int_K f(\xi) \overline{\pi_{i,j}(\xi)}\, d\xi \qquad (f \in L^2(K);\ \pi \in \hat{K};\ i,j=1,\dotsc,d_\pi). \]
The \emph{contragredient} to $\pi\in \hat{K}$ is the representation $\overline{\pi}$ on $\H_\pi$ with matrix elements
\[ \overline{\pi}_{i,j}(\xi) = \overline{\pi_{i,j}(\xi)} \qquad (\xi \in K;\, i,j=1,\dotsc,d_\pi). \]
The contragredient of an irreducible representation is also irreducible. The \emph{Peter-Weyl Theorem} asserts that 
\[ \{ \sqrt{d_\pi} \pi_{i,j} : \pi \in \hat{K},\, i,j = 1,\dotsc,d_\pi\} \]
is an orthonormal basis for $L^2(K)$. In particular,
\begin{equation}\label{eq:Planch}
\Norm{f}_{L^2(K)}^2 = \sum_{\pi \in \hat{K}} \sum_{i,j=1}^{d_\pi} d_\pi | \hat{f}(\pi)_{i,j}|^2 = \sum_{\pi \in \hat{K}} d_\pi \Norm{ \hat{f}(\pi) }_{\HS}^2 \qquad (f \in L^2(K)).
\end{equation}

\medskip

Let $K\backslash G$ be the quotient space of \emph{right} cosets of $K$ in $G$. A \emph{cross section} of $K\backslash G$ in $G$ is a map $\tau \colon K\backslash G \to G$ that selects a representative of each coset. In other words, $\tau(Kx) \in Kx$ for every $Kx \in K\backslash G$. By a classic result of Feldman and Greenleaf \cite{FG}, there is a Borel cross section $\tau \colon K\backslash G \to G$ which maps compact subsets of $K\backslash G$ to sets with compact closure in $G$. Fix such a cross section, and let $T \colon K \times K \backslash G \to G$ be the bijection
\begin{equation} \label{eq:measIsom}
T(\xi, Kx) = \xi\cdot \tau(Kx) \qquad (\xi \in K,\ Kx \in K\backslash G).
\end{equation}
By \cite[Theorem 3.6]{I}, $K\backslash G$ admits a unique regular Borel measure with respect to which $T$ is a measure space isomorphism. We shall always have this measure in mind when we treat $K\backslash G$ as a measure space.

Given a function $f\colon G \to \C$ and a coset $Kx \in K \backslash G$, we will denote $f_{Kx} \colon K \to \C$ for the function given by
\begin{equation} \label{eq:cosetFunct}
f_{Kx}(\xi) = f(\xi\cdot \tau(Kx)) \qquad (\xi \in K).
\end{equation}
Intuitively, we are treating the coset $Kx$ like a copy of $K$ itself, with the chosen representative $\tau(Kx)$ taking the role of the identity element. In this sense, $f_{Kx}$ is just the restriction of $f$ to $Kx$. Obviously,
\begin{equation} \label{eq:tranRes}
(L_\xi f)_{Kx} = L_\xi(f_{Kx}) \qquad (\xi \in K,\ Kx \in K\backslash G).
\end{equation}

\smallskip

\begin{theorem} \label{thm:Zak}
There is a unitary
\[ Z \colon L^2(G) \to \bigoplus_{\pi \in \hat{K}} B(\H_\pi, L^2(K\backslash G ; \H_\pi) ) \]
given by
\begin{equation} \label{eq:ZakFour}
[(Zf)(\pi) u ](Kx) = (f_{Kx})\caret (\pi) u \qquad (f \in L^2(G),\ \pi \in \hat{K},\ u \in \H_\pi,\ Kx \in K\backslash G).
\end{equation}
Here $B(\H_\pi, L^2(K\backslash G ; \H_\pi) )$ is treated as a Hilbert space with inner product $\langle A, B \rangle = d_\pi \tr(B^*A)$, and the direct sum is that of Hilbert spaces.

For $f \in L^2(G)$, $\xi \in K$, and $\pi \in \hat{K}$, the unitary $Z$ satisfies
\begin{equation}\label{eq:ZakTrans}
[Z (L_\xi f)](\pi) = (Zf)(\pi)\, \pi(\xi^{-1}).
\end{equation}
\end{theorem}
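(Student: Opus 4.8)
The plan is to realize $Z$ as a composition of three elementary unitaries, after which both the formula \eqref{eq:ZakFour} and the unitarity of $Z$ become transparent, and then to read off the intertwining relation \eqref{eq:ZakTrans} by a one-line computation. The three stages are: (i) restrict a function on $G$ to its cosets, landing in a vector-valued $L^2$ space over $K\backslash G$; (ii) apply the Fourier transform $\F$ on $L^2(K)$ fiberwise; and (iii) reinterpret a $B(\H_\pi)$-valued $L^2$ function as an operator into $L^2(K\backslash G;\H_\pi)$.

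For the first stage I would exploit the measure space isomorphism $T$ of \eqref{eq:measIsom}. Applying Fubini's theorem to $|f|^2$, viewed through $T$ as an element of $L^1(K\times K\backslash G)$, shows that for $f\in L^2(G)$ the coset function $f_{Kx}$ of \eqref{eq:cosetFunct} lies in $L^2(K)$ for almost every $Kx$, and that $\int_{K\backslash G}\Norm{f_{Kx}}_{L^2(K)}^2\,d(Kx)=\Norm{f}_{L^2(G)}^2$. Together with \cite[Theorem 3.6]{I}, this makes $f\mapsto(Kx\mapsto f_{Kx})$ a unitary $U_1\colon L^2(G)\to L^2(K\backslash G;L^2(K))$. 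Since $\F\colon L^2(K)\to\bigoplus_\pi B(\H_\pi)$ is unitary by Plancherel's Theorem \eqref{eq:Planch}, post-composing with $\F$ in each fiber gives a unitary $U_2\colon L^2(K\backslash G;L^2(K))\to\bigoplus_{\pi\in\hat{K}}L^2(K\backslash G;B(\H_\pi))$, where each $B(\H_\pi)$ carries the Plancherel inner product $\langle A,B\rangle=d_\pi\langle A,B\rangle_{\HS}$. Explicitly, the $\pi$-component of $U_2U_1 f$ is the function $Kx\mapsto(f_{Kx})\caret(\pi)$.

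The heart of the matter is the third stage: identifying $L^2(K\backslash G;B(\H_\pi))$ with $B(\H_\pi,L^2(K\backslash G;\H_\pi))$. Because $\H_\pi$ is finite dimensional, a bounded operator $A$ in the latter space is determined by the $B(\H_\pi)$-valued function $Kx\mapsto A(Kx)$ characterized by $(Au)(Kx)=A(Kx)u$, and conversely. A short computation then shows $B^*A=\int_{K\backslash G}B(Kx)^*A(Kx)\,d(Kx)$ in the weak sense, whence the defining inner product satisfies $d_\pi\tr(B^*A)=\int_{K\backslash G}d_\pi\langle A(Kx),B(Kx)\rangle_{\HS}\,d(Kx)$; this is exactly the norm on $L^2(K\backslash G;B(\H_\pi))$ with the Plancherel normalization, so the correspondence is a unitary $U_3$. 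Setting $Z=U_3U_2U_1$ and unwinding the three stages recovers $[(Zf)(\pi)u](Kx)=(f_{Kx})\caret(\pi)u$, which is \eqref{eq:ZakFour}, and exhibits $Z$ as a composition of unitaries.

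Finally, the intertwining relation \eqref{eq:ZakTrans} follows with no further work: using \eqref{eq:tranRes} to write $(L_\xi f)_{Kx}=L_\xi(f_{Kx})$ and then the translation formula \eqref{eq:FourTrans}, one gets $\big[[Z(L_\xi f)](\pi)\,u\big](Kx)=(f_{Kx})\caret(\pi)\,\pi(\xi^{-1})\,u=\big[(Zf)(\pi)\,\pi(\xi^{-1})\,u\big](Kx)$. I expect the only real obstacle to be bookkeeping rather than conceptual difficulty: one must check that the fiberwise operations in stages (ii) and (iii) preserve measurability, so that the intermediate maps genuinely take values in the asserted Hilbert spaces, and one must verify the inner-product identity in stage (iii) with care, since the normalization $\langle A,B\rangle=d_\pi\tr(B^*A)$ is precisely what makes $U_3$ isometric. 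Everything else is a formal composition of unitaries.
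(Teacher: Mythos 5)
Your proposal is correct and follows essentially the same route as the paper: both factor $Z$ as the coset-restriction unitary induced by the measure-space isomorphism \eqref{eq:measIsom}, followed by the fiberwise Plancherel transform, followed by the canonical identification of $B(\H_\pi)$-valued $L^2$ functions with operators into $L^2(K\backslash G;\H_\pi)$, and both deduce \eqref{eq:ZakTrans} from \eqref{eq:tranRes} and \eqref{eq:FourTrans}. The only cosmetic difference is that the paper phrases the intermediate space as the tensor product $L^2(K)\otimes L^2(K\backslash G)$ and leaves the final identifications as ``natural,'' whereas you work with vector-valued $L^2$ spaces and spell out the inner-product verification explicitly.
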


We call $Z$ the \emph{Zak transform} for the pair $(G,K)$. 

\begin{proof}
The measure space isomorphism $T\colon K \times K\backslash G \to G$ induces a unitary $U \colon L^2(G) \to L^2(K\times K\backslash G)$, namely
\[ (Uf)(\xi,Kx) = f(\xi \cdot \tau(Kx)) = f_{Kx}(\xi) \qquad (f \in L^2(G),\ \xi \in K,\ Kx \in K\backslash G). \]
Follow this with the canonical unitary $V \colon L^2(K\times K\backslash G) \to L^2(K) \otimes L^2(K\backslash G)$, and then apply 
\[ \F_K \otimes \text{id} \colon L^2(K) \otimes L^2(K\backslash G) \to [ \bigoplus_{\pi \in \hat{K}} B(\H_\pi) ] \otimes L^2(K\backslash G). \]
Finally, make the natural identifications
\[ [ \bigoplus_{\pi \in \hat{K}} B(\H_\pi) ] \otimes L^2(K\backslash G) \cong \bigoplus_{\pi \in \hat{K}} [ B(\H_\pi) \otimes L^2(K\backslash G) ] \cong \bigoplus_{\pi \in \hat{K}} B(\H_\pi, \H_\pi \otimes L^2(K\backslash G) ) \]
\[ \cong \bigoplus_{\pi \in \hat{K}} B(\H_\pi, L^2(K\backslash G; \H_\pi)). \]
The resulting composition is $Z$. The translation identity \eqref{eq:ZakTrans} follows directly from \eqref{eq:tranRes}, \eqref{eq:ZakFour}, and the corresponding identity for the Fourier transform \eqref{eq:FourTrans}.
\end{proof}

\begin{rem} \label{rem:ZakFour}
In the extreme case where $K$ is all of $G$, the quotient $K\backslash G$ consists of a single point, and we can interpret $L^2(K\backslash G; \H_\pi)$ as simply being $\H_\pi$. Then the Zak transform reduces to the usual Fourier transform on $L^2(K)$, as long as the cross section $\tau$ chooses the identity element as the representative of the (single) coset of $K$ in $G$.
\end{rem}

\sloppy
In general, the choice of cross-section $\tau$ is noncanonical, and the operator $Z$ depends on this choice. Nonetheless, Zak transforms associated with different cross-sections are easily related. Suppose that $\tau' \colon K\backslash G \to G$ is another cross-section with the required properties. For each $Kx \in K\backslash G$, there is an element $\eta_{Kx} \in K$ such that $\tau'(Kx) = \eta_{Kx} \tau(Kx)$. Denoting $Z_\tau$ and $Z_{\tau'}$ for the versions of the Zak transform obtained using $\tau$ and $\tau'$, respectively, we obtain the following formula from \eqref{eq:ZakFour} and \eqref{eq:FourTrans}:
\[ [(Z_{\tau'} f)(\pi) u](Kx) = \pi(\eta_{Kx}) [(Z_\tau f)(\pi) u](Kx) \qquad (f \in L^2(G),\ \pi \in \hat{K},\ u \in \H_\pi,\ Kx \in K\backslash G). \]
In other words, $Z_{\tau'}$ can be obtained from $Z_\tau$ by applying post composition with $\pi(\eta_{Kx})$ at each point $Kx \in K\backslash G$ and in every coordinate $\pi \in \hat{K}$.

As with the usual Fourier transform on $L^2(K)$, there is another, basis-dependent version of the Zak transform that sometimes makes computation more convenient. When an orthonormal basis $e_1^{\pi}, \dotsc, e_{d_\pi}^\pi$ is chosen for $\H_\pi$, the space $B(\H_\pi, L^2(K\backslash G; \H_\pi) )$ can be identified with $M_{d_\pi}(L^2(K\backslash G))$ by mapping the operator $A$ to the matrix whose $(i,j)$-entry is the function
\[ Kx \mapsto \langle (A e_j^\pi)(Kx), e_i^\pi \rangle \qquad (Kx \in K\backslash G). \]
Under this identification, the inner product on $M_{d_\pi}(L^2(K\backslash G))$ corresponding to the one in the definition of the Zak transform is given by
\[ \langle M, N \rangle = d_\pi \sum_{i,j=1}^{d_\pi} \langle M_{i,j}, N_{i,j} \rangle \qquad (M,N \in M_{d_\pi}(L^2(K\backslash G)) ). \]
When this identification is made for each $\pi \in \hat{K}$, the Zak transform becomes a unitary
\[ \tilde{Z} \colon L^2(G) \to \bigoplus_{\pi \in \hat{K}} M_{d_\pi}(L^2(K\backslash G)). \]
The translation formula \eqref{eq:ZakTrans} then becomes
\begin{equation} \label{eq:ZakTransBas}
[\tilde Z (L_\xi f)](\pi) = (\tilde{Z} f)(\pi) \cdot (\pi_{i,j}(\xi^{-1}) )_{i,j=1}^{d_\pi} \qquad (f \in L^2(G),\ \xi \in K,\ \pi \in \hat{K}),
\end{equation}
where the vector- and scalar-valued matrices multiply using the usual formula for matrix multiplication. For $f \in L^2(G)$ and $\pi \in \hat{K}$, the $(i,j)$-entry of $(\tilde Z f)(\pi)$ is the function in $L^2(K\backslash G)$ given by
\begin{equation} \label{eq:ZakBas}
Kx \mapsto \int_K f(\xi \tau(Kx)) \pi_{i,j}(\xi^{-1})\, d\xi \qquad (Kx \in K\backslash G).
\end{equation}

For example, when $K$ is a compact \emph{abelian} group, each irreducible representation $\pi \in \hat{K}$ has dimension 1. Thus $M_{d_\pi}( L^2(K\backslash G) )$ can be identified with $L^2(K\backslash G)$, and if we reinterpret the direct sum, we may view the Zak transform as a unitary
\[ \tilde{\tilde{Z}} \colon L^2(G) \to \ell^2(\hat{K}; L^2(K\backslash G) ) \]
given by
\[ [(\tilde{\tilde{Z}} f)(\alpha)](Kx) = \int_K f(\xi \tau(Kx)) \overline{\alpha(\xi)}\, d\xi \qquad (f \in L^2(G),\ \alpha \in \hat{K},\ Kx \in K \backslash G). \]
This agrees with the notion of Zak transform for an abelian subgroup described by the author in \cite{I}. If $G$ and $K$ are both abelian, this definition is equivalent to the original notion of Zak transform as described by Weil in \cite[p. 164--165]{W}. That version of the Zak transform has a very long history in harmonic analysis. We refer the reader to \cite{HSWW} for a brief survey.

\medskip

%RANGE FUNCTIONS AND TRANSLATION INVARIANCE================================================
\section{Range functions and translation invariance} \label{sec:ranTran}

A closed subspace $V \subset L^2(G)$ will be called \emph{$K$-invariant} if $L_\xi f \in V$ whenever $f \in V$ and $\xi \in K$. In this section, we apply the Zak transform to classify the $K$-invariant subspaces of $L^2(G)$ in terms of \emph{range functions}.

\begin{defn}
Let $X$ be an indexing set, and let $\mathscr{H} = \{\H(x)\}_{x\in X}$ be a family of Hilbert spaces. A \emph{range function} in $\mathscr{H}$ is a mapping
\[ J \colon X \to \bigcup_{x\in X} \{\text{closed subspaces of } \H(x)\} \]
such that $J(x) \subset \H(x)$ for each $x \in X$. In other words, it is a choice of closed subspace $J(x) \subset \H(x)$ for each $x \in X$.
\end{defn}

If $J$ is a range function in $\{L^2(K\backslash G; \H_\pi)\}_{\pi \in \hat{K}}$, we define
\[ V_J = \{ f \in L^2(G) : \text{for all $\pi \in \hat{K}$, the range of $(Zf)(\pi)$ is contained in $J(\pi)$}\}. \]
In terms of the Zak transform,
\begin{equation} \label{eq:ZVJ}
Z(V_J) = \bigoplus_{\pi \in \hat{K}} B(\H_\pi, J(\pi)),
\end{equation}
where we consider $B(\H_\pi, J(\pi) )$ to be a closed subspace of $B(\H_\pi, L^2(K\backslash G; \H_\pi) )$. The translation identity \eqref{eq:ZakTrans} for the Zak transform shows that $V_J$ is $K$-invariant. Remarkably, every $K$-invariant subspace of $L^2(G)$ takes this form.

\begin{theorem} \label{thm:ranTran}
The mapping $J \mapsto V_J$ is a bijection between range functions in $\{L^2(K\backslash G; \H_\pi)\}_{\pi \in \hat{K}}$ and $K$-invariant subspaces of $L^2(G)$.
\end{theorem}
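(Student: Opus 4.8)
The plan is to show that $J \mapsto V_J$ is both injective and surjective by exploiting the structure of the Zak transform as a unitary intertwining left translation with right multiplication by $\pi(\xi^{-1})$. The key realization is that equation \eqref{eq:ZVJ} already tells us precisely what $Z(V_J)$ looks like: it is the sub-Hilbert-space $\bigoplus_\pi B(\H_\pi, J(\pi))$ of the target space. Since $Z$ is a unitary, injectivity of $J \mapsto V_J$ reduces to showing that distinct range functions $J$ produce distinct subspaces $\bigoplus_\pi B(\H_\pi, J(\pi))$. This should be routine: if $J(\pi) \neq J'(\pi)$ for some $\pi$, then there is a vector in one of $J(\pi), J'(\pi)$ but not the other, and tensoring against a fixed nonzero element of $\H_\pi$ (viewing $B(\H_\pi, \cdot)$ appropriately) produces an operator in one subspace but not the other. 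So the whole weight of the theorem falls on surjectivity.

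For surjectivity, I would start with an arbitrary $K$-invariant subspace $V \subset L^2(G)$ and examine its image $W = Z(V)$, which is a closed subspace of $\bigoplus_\pi B(\H_\pi, L^2(K\backslash G; \H_\pi))$. The goal is to show $W$ has the special product form $\bigoplus_\pi B(\H_\pi, J(\pi))$ for some range function $J$. The translation identity \eqref{eq:ZakTrans} says that for $A \in W$, the element $(A(\pi)\,\pi(\xi^{-1}))_\pi$ also lies in $W$ for every $\xi \in K$; that is, $W$ is invariant under right multiplication by each $\pi(\xi^{-1})$. Because $\pi$ is irreducible, the operators $\{\pi(\xi^{-1}) : \xi \in K\}$ span all of $B(\H_\pi)$ (by the fact that an irreducible representation generates the full matrix algebra, a consequence of Schur's lemma / the density theorem). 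Hence $W$ is in fact invariant under right multiplication by every element of $B(\H_\pi)$ in each coordinate. This is the crucial structural step: right-multiplication invariance by the full matrix algebra is exactly what forces $W$ to be a ``column space'' of the form $B(\H_\pi, J(\pi))$.

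The technical heart, which I expect to be \textbf{the main obstacle}, is converting that module-invariance into the pointwise subspace description. Concretely, for each $\pi$ I would define
\[
J(\pi) = \overline{\spn}\{ (A(\pi)u)(Kx) : A \in W,\ u \in \H_\pi,\ Kx \in K\backslash G \},
\]
or more carefully the closed span of ranges of the operators $A(\pi)$ as $A$ ranges over $W$, and then argue that $W$ coincides with $\bigoplus_\pi B(\H_\pi, J(\pi))$. One inclusion is immediate from the definition of $J$. For the reverse, I must use right-multiplication invariance to show that if $v \in L^2(K\backslash G; \H_\pi)$ lies in the range of some $A(\pi)$, then \emph{every} operator $\H_\pi \to L^2(K\backslash G;\H_\pi)$ whose range sits inside $J(\pi)$ is realized by some element of $W$. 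The point is that right multiplication by rank-one operators $u \otimes u'$ lets one isolate individual ``columns'' $A(\pi)u$ and reassemble them freely, so that $W$ contains all operators built from vectors already in $J(\pi)$. Care is needed with the direct-sum structure over $\pi$ and with closedness, since $J(\pi)$ is defined as a closed span and one must confirm the assembled subspace is genuinely closed and that the finite-dimensionality of $\H_\pi$ keeps the algebraic manipulations (rank-one right multiplications, column extraction) well-behaved.

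Once $W = Z(V)$ is shown to equal $\bigoplus_\pi B(\H_\pi, J(\pi))$, applying $Z^{-1}$ and comparing with \eqref{eq:ZVJ} gives $V = V_J$, establishing surjectivity and completing the bijection. I would organize the write-up so that the irreducibility-gives-full-matrix-algebra observation is stated cleanly up front, since it is the bridge between the representation-theoretic hypothesis ($K$-invariance) and the purely operator-theoretic conclusion about the shape of $W$.
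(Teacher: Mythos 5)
Your overall strategy is the paper's: injectivity from unitarity of $Z$ together with \eqref{eq:ZVJ}, and surjectivity by showing that right-multiplication invariance of $W=Z(V)$ under $B(\H_\pi)=\spn\{\pi(\xi^{-1}):\xi\in K\}$ (Lemma \ref{lem:repSpn}) forces $W$ to have the column-space form $\bigoplus_\pi B(\H_\pi,J(\pi))$, with $J(\pi)$ the closed span of the ranges. The per-$\pi$ ``column extraction'' you sketch (rank-one right multiplications, finite-dimensionality of $\H_\pi$, a closedness/approximation step) is exactly what the paper does.

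There is, however, one genuine gap: the passage from ``$W$ is invariant under the simultaneous right multiplication $(A(\sigma))_\sigma\mapsto (A(\sigma)\sigma(\xi^{-1}))_\sigma$'' to ``$W$ is invariant under right multiplication by every element of $B(\H_\pi)$ \emph{in each coordinate separately}.'' The fact that $\spn\{\pi(\xi^{-1}):\xi\in K\}=B(\H_\pi)$ for each fixed $\pi$ does not give this: a finite linear combination $\sum_j c_j\,(\sigma(\xi_j^{-1}))_\sigma$ acts on \emph{all} coordinates at once, and when $\hat K$ is infinite no finite combination is supported at a single $\pi$. What is needed first is the decomposition $W=\bigoplus_{\pi}W_\pi$ into its $\pi$-blocks, i.e.\ that the ``central projections'' onto each coordinate preserve $W$. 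This is true, but it requires either an averaging argument (integrate $\xi\mapsto (A(\sigma)\sigma(\xi^{-1}))_\sigma$ against $d_\pi\overline{\chi_\pi}$ and use Schur orthogonality plus closedness of $W$ under weak integrals) or, as the paper does, the isotypical-component machinery: the paper first identifies $M_\pi=\{f:(Zf)(\sigma)=0\text{ for }\sigma\neq\pi\}$ as the isotypical component of $\overline{\pi}$ in the translation representation and invokes Lemma \ref{lem:isoCom}(ii) to write $V=\bigoplus_\pi V\cap M_\pi$ \emph{before} running the per-$\pi$ argument on $W_\pi=Z(V\cap M_\pi)$. Your argument is complete once you insert this decoupling step; without it, the coordinate-wise invariance you rely on is asserted rather than proved (it does hold automatically only when $K$ is finite, where Artin--Wedderburn gives $\spn\{(\sigma(\xi))_\sigma\}=\bigoplus_\sigma B(\H_\sigma)$ exactly).
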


A basis-dependent version of this theorem runs as follows. Choose orthonormal bases for each of the spaces $\H_\pi$, $\pi \in \hat{K}$, and let
\[ \tilde{Z} \colon L^2(G) \to \bigoplus_{\pi \in \hat{K}} M_{d_\pi}( L^2(K\backslash G) ) \]
be the resulting basis-dependent Zak transform. For each $\pi \in \hat{K}$, we will think of the columns of $M_{d_\pi}( L^2(K\backslash G) )$ as elements of $L^2(K\backslash G)^{\oplus d_\pi}$, the direct sum of $d_\pi$ copies of $L^2(K\backslash G)$. Given a range function $J$ in $\{ L^2(K\backslash G)^{\oplus d_\pi} \}_{\pi \in \hat{K}}$, let
\[ \tilde{V}_J = \{ f \in L^2(G) : \text{for all $\pi \in \hat{K}$, the columns of $(\tilde{Z}f)(\pi)$ lie in $J(\pi)$}\}. \]
Then $J \mapsto \tilde{V}_J$ is a bijection between range functions in $\{ L^2(K\backslash G)^{\oplus d_\pi}\}_{\pi \in \hat{K}}$ and $K$-invariant subspaces of $L^2(G)$.

Range functions have a long history in the theory of translation invariance. Helson \cite{He} and Srinivasan \cite{S} seem to have the first results in this area. Their work was released at approximately the same time, and each cites the other, so it is not clear who deserves credit for this line of research. The idea of applying a Fourier-like transform and classifying invariant subspaces in terms of range functions has since been applied by a host of researchers in a variety of settings \cite{ACHKM,ACP,ACP2,B,B2,BR,CP,CP2,CMO,BDR,He2,KR}. Recently, the author \cite{I} and Hern{\'a}ndez, et al.\ \cite{BHP2} independently applied a version of the Zak transform to classify translation invariance by an abelian subgroup. The present theorem extends these results to the setting of compact groups. 

We emphasize the novelty of applying this technique with a nonabelian subgroup. Of the results mentioned above, only Currey, et al.\ \cite{CMO} treats a noncommutative case.\footnote{At least one other group of researchers has shown interest in generalizing the shift-invariance results of \cite{B} to the compact nonabelian setting. An attempt at a classification theorem appears in \cite{RK}.} The theory of translation invariance in the nonabelian setting is only in its beginning stages. We hope that by first understanding the case of compact groups, where the representation theory is comparatively simple, we can help point a direction for understanding more general locally compact nonabelian groups.

\medskip

The proof of Theorem \ref{thm:ranTran} relies on a standard decomposition of actions of compact groups. If $\rho\colon K \to U(\H_\rho)$ is a unitary representation of $K$, then the \emph{isotypical component} of $\pi \in \hat{K}$ in $\rho$ is the invariant subspace $\mathcal{M}_\pi \subset \H_\rho$ spanned by all subspaces of $\H_\rho$ on which $\rho$ is unitarily equivalent to $\pi$. Then 
\begin{equation} \label{eq:dirSumIso}
\H_\rho = \bigoplus_{\pi \in \hat{K}} \mathcal{M}_\pi.
\end{equation}
Moreover, each $\mathcal{M}_\pi$ decomposes as a direct sum of irreducible subspaces on which $\rho$ is equivalent to $\pi$. If $\mult(\pi,\rho)$ is the multiplicity of $\pi$ in $\rho$, it follows that
\begin{equation} \label{eq:multDimIso}
\dim \mathcal{M}_\pi = d_\pi \cdot \mult(\pi,\rho) \qquad (\pi \in \hat{K}).
\end{equation}
See \cite[\S 5.1]{F}. 

Given an invariant subspace $V \subset \H_\rho$, we will write $\rho^V$ for the subrepresentation of $\rho$ on $V$. The following can be deduced easily from \cite[Theorem 27.44]{HR2}.

\begin{lemma} \label{lem:isoCom}
Let $\rho \colon K \to U(\H_\rho)$ be a unitary representation of $K$, with isotypical components $\mathcal{M}_\pi \subset \H_\rho$ for $\pi\in \hat{K}$.
\begin{enumerate}[(i)]
\item For each $\pi \in \hat{K}$, let $E_\pi \subset \H_\rho$ be the closed linear span of some invariant subspaces on which $\rho$ is equivalent to $\pi$. If $\H_\rho = \bigoplus_{\pi \in \hat{K}} E_\pi$, then $E_\pi = \mathcal{M}_\pi$ for every $\pi \in \hat{K}$.

\item If $V \subset \H_\rho$ is an invariant subspace, then $V \cap \mathcal{M}_\pi$ is the isotypical component of $\pi \in \hat{K}$ in $\rho^V$.
\end{enumerate}
\end{lemma}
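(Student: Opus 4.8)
The plan is to prove both statements by reducing them to the uniqueness of isotypical decomposition, which is recorded in equation \eqref{eq:dirSumIso} and the surrounding discussion. The essential point is that the isotypical components $\M_\pi$ are canonically determined by $\rho$ alone, so any two natural descriptions of them must agree.

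For part (i), first I would recall that each $E_\pi$, being the closed span of invariant subspaces on which $\rho \cong \pi$, is itself an invariant subspace on which $\rho$ acts as a multiple of $\pi$; hence $E_\pi \subset \M_\pi$ by the very definition of the isotypical component as the span of \emph{all} subspaces equivalent to $\pi$. The hypothesis $\H_\rho = \bigoplus_{\pi} E_\pi$ then combines with the decomposition \eqref{eq:dirSumIso} as follows: I have an inclusion $E_\pi \subset \M_\pi$ for every $\pi$, the $E_\pi$ sum (orthogonally) to all of $\H_\rho$, and the $\M_\pi$ also sum orthogonally to $\H_\rho$. Since $\M_{\pi'} \perp \M_\pi$ for $\pi' \neq \pi$ (the isotypical components are mutually orthogonal, being the images of distinct central projections in the group von Neumann algebra), the inclusion $E_\pi \subset \M_\pi$ combined with $\bigoplus E_\pi = \bigoplus \M_\pi$ forces equality $E_\pi = \M_\pi$ coordinatewise. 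Concretely: if some $E_{\pi_0} \subsetneq \M_{\pi_0}$ were proper, there would be a nonzero vector $v \in \M_{\pi_0} \ominus E_{\pi_0}$; this $v$ is orthogonal to $E_{\pi_0}$ by construction and to every $E_\pi$ with $\pi \neq \pi_0$ since $v \in \M_{\pi_0} \perp \M_\pi \supset E_\pi$, contradicting $\bigoplus_\pi E_\pi = \H_\rho$.

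For part (ii), I would work inside a fixed invariant subspace $V$ and compare the isotypical decomposition of $\rho^V$ with that of $\rho$. Write $\N_\pi \subset V$ for the isotypical component of $\pi$ in $\rho^V$. The inclusion $\N_\pi \subset V \cap \M_\pi$ is immediate: $\N_\pi$ is a subspace of $V$ on which $\rho^V$ (hence $\rho$) is a multiple of $\pi$, so $\N_\pi \subset \M_\pi$, and $\N_\pi \subset V$ by definition. For the reverse inclusion, the spaces $\{V \cap \M_\pi\}_{\pi \in \hat K}$ are invariant subspaces of $V$, they are mutually orthogonal (inheriting orthogonality from the $\M_\pi$), each one is a multiple of $\pi$ (being a subrepresentation of $\M_\pi$), and they sum to all of $V$: indeed, since $V$ is invariant, the orthogonal projection $P_\pi$ onto $\M_\pi$ commutes with $\rho$ and therefore maps $V$ into $V$, giving $P_\pi V = V \cap \M_\pi$ and $V = \bigoplus_\pi V \cap \M_\pi$. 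This places me exactly in the hypothesis of part (i), applied to the representation $\rho^V$ on $V$ with the candidate spaces $E_\pi := V \cap \M_\pi$. Invoking part (i) then yields $V \cap \M_\pi = \N_\pi$, as desired.

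The main obstacle, and the one step deserving care, is justifying that the projection $P_\pi$ onto $\M_\pi$ commutes with $\rho$ and consequently preserves the invariant subspace $V$, so that $V = \bigoplus_\pi (V \cap \M_\pi)$ rather than merely $V \subset \bigoplus_\pi \M_\pi$. This is where the cited \cite[Theorem 27.44]{HR2} does the real work: it identifies $P_\pi$ as a central projection (explicitly, convolution against $d_\pi \overline{\chi_\pi}$, the character projection), which is intertwining precisely because it lies in the center of the relevant operator algebra. Once $P_\pi \in \rho(K)'$ is in hand, the reduction to part (i) is purely formal, and everything else is bookkeeping with orthogonal direct sums.
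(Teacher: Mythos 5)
Your proof is correct. The paper itself offers no argument here beyond the remark that the lemma ``can be deduced easily from [Theorem 27.44]{HR2}'', so your writeup is essentially a self-contained expansion of what the paper delegates to that citation, and it hits the right points: $E_\pi \subset \mathcal{M}_\pi$ by maximality of the isotypical component, mutual orthogonality of the $\mathcal{M}_\pi$ (via Schur) forcing coordinatewise equality in (i), and in (ii) the centrality of $P_\pi$ (as the character projection $d_\pi \int_K \overline{\chi_\pi(\xi)}\rho(\xi)\,d\xi$) to get $V = \bigoplus_\pi (V \cap \mathcal{M}_\pi)$ before invoking (i) for $\rho^V$. The one step you gloss over slightly --- that each $V \cap \mathcal{M}_\pi$ is genuinely a closed span of irreducibles equivalent to $\pi$, i.e.\ that a subrepresentation of a multiple of $\pi$ is again a multiple of $\pi$ --- is standard for compact groups and you do flag it, so there is no gap.
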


The next lemma follows from Schur's Lemma and the Double Commutant Theorem for von Neumann algebras.

\begin{lemma} \label{lem:repSpn}
Let $\pi \in \hat{K}$. Then $B(\H_\pi) = \spn\{ \pi(\xi) : \xi \in K\}$.
\end{lemma}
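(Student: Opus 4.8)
The plan is to realize $\A := \spn\{\pi(\xi) : \xi \in K\}$ as a von Neumann algebra whose commutant is trivial, and then to invoke the Double Commutant Theorem. The hint in the statement points exactly here: Schur's Lemma computes the commutant, and taking the bicommutant recovers $\A$ itself on one side while producing all of $B(\H_\pi)$ on the other.

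First I would check that $\A$ is a unital $*$-subalgebra of $B(\H_\pi)$. It is a linear subspace by definition, and it contains $I = \pi(e)$. It is closed under multiplication because $\pi(\xi)\pi(\eta) = \pi(\xi\eta)$, so products of spanning elements stay in $\A$ and bilinearity handles the rest. It is closed under adjoints because $\pi$ is unitary, giving $\pi(\xi)^* = \pi(\xi^{-1}) \in \A$, with conjugate-linearity finishing the argument. Since $\H_\pi$ is finite dimensional, $B(\H_\pi)$ is finite dimensional, so $\A$ is automatically norm-closed (hence weakly closed); being a self-adjoint unital algebra that is weakly closed, it is a von Neumann algebra. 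This is the one place requiring a little care, since the Double Commutant Theorem gives $\A = \A''$ only because $\A$ is self-adjoint, unital, and weakly closed; but all three properties are immediate here, thanks to unitarity of $\pi$ and finite-dimensionality.

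Next I would compute the commutant $\A'$. An operator $T \in B(\H_\pi)$ lies in $\A'$ exactly when $T\pi(\xi) = \pi(\xi)T$ for every $\xi \in K$, i.e.\ exactly when $T$ is a self-intertwiner of the irreducible representation $\pi$. By Schur's Lemma, $\A' = \C I$. Finally, the Double Commutant Theorem yields $\A = \A'' = (\C I)'$, and since every operator commutes with the scalars, $(\C I)' = B(\H_\pi)$. Hence $\A = B(\H_\pi)$, as claimed. The only conceptual step is identifying $\A'$ with the intertwiners of $\pi$ so that Schur applies; everything else is bookkeeping made trivial by the finite-dimensional setting.
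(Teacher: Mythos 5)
Your proof is correct and follows exactly the route the paper indicates: the paper gives no detailed argument but states that the lemma ``follows from Schur's Lemma and the Double Commutant Theorem for von Neumann algebras,'' which is precisely your argument. The verification that the span is a weakly closed unital $*$-algebra (automatic in finite dimensions) and the identification of the commutant with the self-intertwiners of $\pi$ are the right details to supply.
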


\medskip

\noindent \emph{Proof of Theorem \ref{thm:ranTran}.}
Since $Z$ is unitary, \eqref{eq:ZVJ} shows that the mapping $J \mapsto V_J$ is injective. We need only prove that every $K$-invariant subspace $V\subset L^2(G)$ arises as such a $V_J$. To do this, we will first show that $V$ decomposes as a direct sum of simpler pieces, and then we will leverage the Zak transform's translation property \eqref{eq:ZakTrans} on each piece.

Let $\rho$ be the action of $K$ on $L^2(G)$ by left translation.  For each $\pi \in \hat{K}$, let
\[ M_\pi = \{f \in L^2(G) : (Zf)(\sigma) = 0 \text{ for }\sigma \neq \pi\}. \]
We claim that $M_\pi$ is the isotypical component of $\overline{\pi}$ in $\rho$. Fix an orthonormal basis $e_1^\pi,\dotsc,e_{d_\pi}^\pi \in \H_\pi$. For each $\pi \in \hat{K}$, $i=1,\dotsc,d_\pi$, and nonzero $F \in L^2(K\backslash G; \H_\pi)$, we define $F_{\pi,i} \in L^2(G)$ by
\[ (ZF_{\pi,i})(\sigma)e_j^\sigma = \begin{cases}
d_\pi^{-1/2} \Norm{F}^{-1} \cdot F, & \text{if }\sigma = \pi \text{ and }i=j \\
0, & \text{otherwise}
\end{cases} \qquad (\sigma \in \hat{K};\ j = 1,\dotsc,d_\sigma)\]
Then $\langle F_{\pi,i}, F_{\pi,j} \rangle = \langle Z F_{\pi,i}, Z F_{\pi,} \rangle = \delta_{i,j}$, and one can check that
\[ L_\xi F_{\pi,j} = \sum_{i=1}^{d_\pi} \overline{\pi}_{i,j}(\xi)\cdot F_{\pi,i}. \]
Hence $\spn\{F_{\pi,i} : i=1,\dotsc,d_\pi\}$ is a $K$-invariant subspace of $M_\pi$ on which $\rho$ is equivalent to $\overline{\pi}$. Moreover,
\[ M_\pi = \overline{\spn}\{ F_{\pi,i} : F\in L^2(K\backslash G; \H_\pi), i = 1,\dotsc, d_\pi\}. \]
The claim follows from Lemma \ref{lem:isoCom}(i).

Now let $V \subset L^2(G)$ be a $K$-invariant subspace. By Lemma \ref{lem:isoCom}(ii),
\[ V = \bigoplus_{\pi \in \hat{K}} V \cap M_\pi. \]
Since $(Zf)(\sigma)=0$ for $f\in V\cap M_\pi$ and $\sigma \neq \pi$, we may view $W_\pi := Z(V \cap M_\pi)$ as a closed subspace of $B(\H_\pi, L^2(K\backslash G; \H_\pi))$. Let
\[ J(\pi) = \overline{\spn}\{ Au : A \in W_\pi, u \in \H_\pi \} \subset L^2(K\backslash G; \H_\pi). \]
Clearly $W_\pi \subset B(\H_\pi, J(\pi) )$. If we can upgrade this inclusion to equality, we will be able to conclude that
\[ Z(V) = \bigoplus_{\pi \in \hat{K}} Z(V\cap M_\pi) = \bigoplus_{\pi \in \hat{K}} B(\H_\pi, J(\pi) ), \]
and the proof will be complete.

Fix any $A \in B(\H_\pi, J(\pi) )$. We want to show that $A \in W_\pi$. A moment's thought shows that $A$ is the sum of operators in $B(\H_\pi, J(\pi) )$ whose kernels have codimension one. It is enough to show that each of those operators belongs to $W_\pi$. We may therefore assume that there is a unit norm vector $u \in \H_\pi$ such that $Av = 0$ for all $v \perp u$. Let $\epsilon > 0$ be arbitrary. Since $\ran A \subset J(\pi)$, we can find operators $B_1,\dotsc,B_n \in W_\pi$ and nonzero vectors $v_1,\dotsc,v_n \in \H_\pi$ such that
\[ \Norm{ Au - \sum_{j=1}^n B_j v_j }^2 < \epsilon. \]
We are going to produce an operator $B \in W_\pi$ with $B u = \sum_{j=1}^n B_j v_j$ and $B v = 0$ for $v \perp u$.

Here is the key step. Since $V \cap M_\pi$ is invariant under left translation by $K$, the identity \eqref{eq:ZakTrans} shows that $W_\pi = Z(V\cap M_\pi)$ is invariant under right multiplication by $\pi(\xi^{-1})$ for each $\xi \in K$. Therefore $W_\pi$ is invariant under right multiplication by $B(\H_\pi) = \spn\{\pi(\xi^{-1}) : \xi \in K\}$. In particular, we can precompose each $B_j \in W_\pi$ with another operator in $B(\H_\pi)$ to make $B_j' \in W_\pi$ satisfying $B_j' u = B_j v_j$ and $B_j' v = 0$ for all $v \perp u$. Then $B:= B_1' + \dotsb + B_n'$ belongs to $W_\pi$, and 
\[ \Norm{ A - B }^2 = d_\pi \Norm{ Au - Bu }^2 < d_\pi \epsilon. \]
Since $W_\pi$ is closed and $\epsilon >0$ was arbitrary, we conclude that $A \in W_\pi$. Therefore,
\[ Z(V \cap M_\pi) = W_\pi = B(\H_\pi, J(\pi) ), \]
as desired. \hfill \qed

\medskip

The preceding proof contained a fact that is useful in its own right.

\begin{prop} \label{prop:isoComp}
Let $J$ be a range function in $\{L^2(K\backslash G; \H_\pi)\}_{\pi \in \hat{K}}$, and let $\rho_J$ be the representation of $K$ on $V_J$ given by left translation. Then the isotypical component of $\pi \in \hat{K}$ in $\rho_J$ is 
\[ \mathcal{M}_\pi = \{ f \in V_J : (Zf)(\sigma) = 0 \text{ for }\sigma \neq \overline{\pi} \}. \]
In particular,
\begin{equation} \label{eq:multRan}
\mult(\pi, \rho_J) = \dim J(\overline{\pi}).
\end{equation}
and
\begin{equation} \label{eq:dimRan}
\dim V_J = \sum_{\pi \in \hat{K}} d_\pi\cdot \dim J(\overline{\pi}).
\end{equation}
\end{prop}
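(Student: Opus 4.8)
The plan is to piggyback on the work already done in the proof of Theorem~\ref{thm:ranTran}, where the subspaces $M_\pi = \{f \in L^2(G) : (Zf)(\sigma) = 0 \text{ for } \sigma \neq \pi\}$ were shown to be the isotypical components of $\overline\pi$ in the full left-translation representation $\rho$ on $L^2(G)$. Equivalently, the isotypical component of $\pi$ in $\rho$ is $M_{\overline\pi}$. Since $V_J \subset L^2(G)$ is $K$-invariant, I would apply Lemma~\ref{lem:isoCom}(ii) to conclude that the isotypical component of $\pi$ in $\rho_J = \rho^{V_J}$ is $V_J \cap M_{\overline\pi}$, which is exactly $\{f \in V_J : (Zf)(\sigma) = 0 \text{ for } \sigma \neq \overline\pi\}$. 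This establishes the displayed formula for $\mathcal{M}_\pi$.

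For the multiplicity, I would transport everything through the Zak transform. Because $Z$ is injective, $Z(\mathcal{M}_\pi) = Z(V_J) \cap Z(M_{\overline\pi})$; by \eqref{eq:ZVJ} the first factor is $\bigoplus_\sigma B(\H_\sigma, J(\sigma))$, while the second is the subspace supported in the single coordinate $\sigma = \overline\pi$, so the intersection is $B(\H_{\overline\pi}, J(\overline\pi))$ sitting in the $\overline\pi$-slot. As $Z$ is unitary, $\dim \mathcal{M}_\pi = \dim B(\H_{\overline\pi}, J(\overline\pi))$. A rank-one basis argument (or the identification $B(\H_{\overline\pi}, J(\overline\pi)) \cong \H_{\overline\pi}^* \otimes J(\overline\pi)$) shows this dimension is $d_{\overline\pi} \cdot \dim J(\overline\pi) = d_\pi \cdot \dim J(\overline\pi)$, using $d_{\overline\pi} = d_\pi$. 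Comparing with \eqref{eq:multDimIso}, which gives $\dim \mathcal{M}_\pi = d_\pi \cdot \mult(\pi, \rho_J)$, cancels the factor $d_\pi$ and yields $\mult(\pi, \rho_J) = \dim J(\overline\pi)$.

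Finally, the dimension formula for $V_J$ is immediate from the isotypical decomposition \eqref{eq:dirSumIso} applied to $\rho_J$: summing over $\pi$ gives $\dim V_J = \sum_\pi \dim \mathcal{M}_\pi = \sum_\pi d_\pi \cdot \dim J(\overline\pi)$.

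I expect the difficulties here to be bookkeeping rather than conceptual. The first point to watch is keeping the contragredient straight: the slot indexed by $\pi$ in the Zak transform corresponds to the isotypical component of $\overline\pi$, so the $\pi$-component of $\rho_J$ reads off $J(\overline\pi)$ and not $J(\pi)$. The second is the dimension count for $B(\H_{\overline\pi}, J(\overline\pi))$ when $J(\overline\pi)$ is infinite-dimensional; here I would interpret $\dim$ as Hilbert-space (orthogonal) dimension and invoke cardinal arithmetic, noting that the finite factor $d_\pi$ is absorbed in the infinite case, so the identity $\mult(\pi, \rho_J) = \dim J(\overline\pi)$ persists regardless.
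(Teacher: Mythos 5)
Your proposal is correct and follows essentially the same route as the paper: the paper's proof likewise cites the identification of $M_\pi$ as the isotypical component of $\overline{\pi}$ established in the proof of Theorem~\ref{thm:ranTran}, computes $\dim\mathcal{M}_\pi = \dim Z\mathcal{M}_\pi = d_\pi\cdot\dim J(\overline{\pi})$, and then invokes \eqref{eq:multDimIso} and \eqref{eq:dirSumIso}. Your extra care with the contragredient indexing and the infinite-dimensional cancellation is sound but does not change the argument.
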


\begin{proof}
That $\mathcal{M}_\pi$ is the isotypical component of $\pi$ in $\rho_J$ was proven above. To see \eqref{eq:multRan}, simply observe that
\[ \dim \mathcal{M}_\pi = \dim Z \mathcal{M}_\pi = d_\pi \cdot \dim J(\pi) \]
and apply \eqref{eq:multDimIso}. Then \eqref{eq:dimRan} follows from \eqref{eq:dirSumIso}.
\end{proof}

\begin{rem} \label{rem:dimRan}
\sloppy
$K$-invariant spaces are determined up to unitary equivalence by the dimensions of the spaces chosen by their range functions, in the following sense. Let $J_1$ and $J_2$ be two range functions in $\{L^2(K\backslash G; \H_\pi)\}_{\pi \in \hat{K}}$, and let $V_1$ and $V_2$ be the corresponding $K$-invariant subspaces of $L^2(G)$. Then there is a unitary map $U\colon V_1 \to V_2$ with the property that
\[ U L_\xi = L_\xi U \qquad (\xi \in K) \]
if and only if
\[ \dim J_1(\pi) = \dim J_2(\pi) \qquad (\pi \in \hat{K}). \]
This is a consequence of \eqref{eq:multRan}, since representations of compact groups are determined up to unitary equivalence by multiplicities of irreducible representations. Compare with Bownik's results on the dimension function for shift-invariant subspaces of $L^2(\R^n)$ \cite[Theorem 4.10]{B}.
\end{rem}

\begin{theorem} \label{thm:ranGen}
Let $\A \subset L^2(G)$ be an arbitrary family of functions, and let $S(\A) \subset L^2(G)$ be the $K$-invariant subspace generated by $\A$. That is,
\[ S(\A) = \overline{\spn}\{ L_\xi f : \xi \in K,\ f \in \A\}. \]
Then $S(\A) = V_J$, where
\[ J(\pi) = \overline{\spn}\{ \ran (Zf)(\pi) : f \in \A\} \qquad (\pi \in \hat{K}). \]
\end{theorem}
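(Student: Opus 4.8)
The plan is to sandwich $S(\A)$ between $V_J$ and itself, exploiting that $V_J$ is already known to be a closed $K$-invariant subspace and that $S(\A)$, being itself a closed $K$-invariant subspace, must arise as some $V_{J'}$ by Theorem \ref{thm:ranTran}. The correspondence $J \mapsto V_J$ from that theorem does essentially all of the work; what remains is bookkeeping about pointwise inclusions of range functions.

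First I would establish $S(\A) \subset V_J$. By the very definition of $J$, every $f \in \A$ satisfies $\ran (Zf)(\pi) \subset J(\pi)$ for all $\pi \in \hat{K}$, so $f \in V_J$. Since the translation identity \eqref{eq:ZakTrans} shows $V_J$ is $K$-invariant, and $V_J$ is closed (because $Z(V_J) = \bigoplus_{\pi \in \hat{K}} B(\H_\pi, J(\pi))$ is closed and $Z$ is unitary, cf.\ \eqref{eq:ZVJ}), the closed linear span $S(\A)$ of $\{L_\xi f : \xi \in K,\ f \in \A\}$ is contained in $V_J$.

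For the reverse inclusion I would note that $S(\A)$ is a closed $K$-invariant subspace, so Theorem \ref{thm:ranTran} furnishes a range function $J'$ with $S(\A) = V_{J'}$. Since $\A \subset S(\A) = V_{J'}$, each $f \in \A$ has $\ran (Zf)(\pi) \subset J'(\pi)$ for all $\pi$; because $J'(\pi)$ is a closed subspace, taking the closed span over $f \in \A$ gives $J(\pi) \subset J'(\pi)$. The pointwise inclusion $J \subset J'$ immediately yields $V_J \subset V_{J'}$ straight from the definitions: if $\ran (Zf)(\pi) \subset J(\pi) \subset J'(\pi)$ for all $\pi$, then $f \in V_{J'}$. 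Hence $V_J \subset V_{J'} = S(\A)$, and combined with the first inclusion this gives $S(\A) = V_J$.

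The argument is short precisely because Theorem \ref{thm:ranTran} has already absorbed the difficulty: the genuinely hard step, namely that every closed $K$-invariant subspace is of the form $V_{J'}$, is exactly the surjectivity proved there, and the delicate spanning-over-ranges estimate lives in that proof rather than here. The only points needing care in the present proof are the trivial monotonicity $J \subset J' \Rightarrow V_J \subset V_{J'}$ and the routine verification that $S(\A)$ is closed and $K$-invariant so that Theorem \ref{thm:ranTran} applies; beyond invoking the closedness of each $J'(\pi)$, no further manipulation is required.
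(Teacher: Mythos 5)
Your proof is correct and follows essentially the same route as the paper: both arguments rest on the trivial monotonicity $J(\pi)\subset J'(\pi)$ for all $\pi$ implies $V_J\subset V_{J'}$, together with the surjectivity of $J\mapsto V_J$ from Theorem \ref{thm:ranTran} applied to the closed $K$-invariant space $S(\A)$. The paper phrases this as a single minimality argument ($S(\A)$ is the smallest invariant space containing $\A$, hence corresponds to the smallest admissible range function), whereas you unpack it into two inclusions, but the content is identical.
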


\begin{proof}
If $J$ and $J'$ are two range functions in $\{L^2(K\backslash G; \H_\pi)\}_{\pi \in \hat{K}}$ with the property that $J(\pi) \subset J'(\pi)$ for all $\pi \in \hat{K}$, then it is easy to see that $V_J \subset V_{J'}$. Moreover, $V_{J'}$ contains $S(\A)$ if and only if $J'(\pi)$ contains $\ran (Zf)(\pi)$ for all $f \in \A$, for every $\pi \in \hat{K}$. Since $S(\A)$ is the smallest $K$-invariant space containing $\A$, the corresponding range function $J$ must be such that $J(\pi)$ is the smallest closed subspace of  $L^2(K\backslash G; \H_\pi)$ containing $\ran (Zf)(\pi)$ for all $f \in \A$, for every $\pi \in \hat{K}$. That subspace is precisely
\[ \overline{\spn}\{ \ran (Zf)(\pi) : f \in \A\}. \qedhere\]
\end{proof}

\smallskip

\begin{cor} \label{cor:noCyc}
$L^2(G)$ contains a function $f$ with $\overline{\spn}\{L_\xi f : \xi \in K\} = L^2(G)$ if and only if $G=K$.
\end{cor}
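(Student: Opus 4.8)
The plan is to read off cyclicity directly from the range-function description in Theorem \ref{thm:ranGen}. Taking $\A = \{f\}$, that theorem gives $\overline{\spn}\{L_\xi f : \xi \in K\} = V_J$, where $J(\pi) = \overline{\ran (Zf)(\pi)}$. Since $Z$ is unitary and $J \mapsto V_J$ is a bijection (Theorem \ref{thm:ranTran}), this span equals all of $L^2(G)$ precisely when $J(\pi) = L^2(K\backslash G; \H_\pi)$ for every $\pi \in \hat{K}$; that is, when $(Zf)(\pi)$ has dense range for each $\pi$.

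The key observation is a dimension count. For each $\pi$, the operator $(Zf)(\pi)$ maps the finite-dimensional space $\H_\pi$ (of dimension $d_\pi$) into $L^2(K\backslash G; \H_\pi)$, so its range is a subspace of dimension at most $d_\pi$; being finite-dimensional, this range is already closed, and hence $J(\pi) = \ran (Zf)(\pi)$ satisfies $\dim J(\pi) \le d_\pi$. On the other hand, $L^2(K\backslash G; \H_\pi) \cong L^2(K\backslash G) \otimes \H_\pi$ has dimension $d_\pi \cdot \dim L^2(K\backslash G)$. Therefore the equality $J(\pi) = L^2(K\backslash G; \H_\pi)$ can hold only if $\dim L^2(K\backslash G) \le 1$. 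A cyclic $f$ thus forces $\dim L^2(K\backslash G) = 1$, meaning the measure space $K\backslash G$ reduces to a single point, which is to say $G = K$.

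Conversely, suppose $G = K$. By Remark \ref{rem:ZakFour} the Zak transform is then the Fourier transform and $L^2(K\backslash G; \H_\pi)$ is just $\H_\pi$, so cyclicity of $f$ amounts to $\hat{f}(\pi)$ being surjective---equivalently invertible---on each $\H_\pi$. To produce such an $f$, I would prescribe its Fourier transform directly: since $K$ is second countable, $\hat{K}$ is countable, so I can choose scalars $c_\pi > 0$ with $\sum_{\pi \in \hat{K}} d_\pi^2 c_\pi^2 < \infty$ and set $\hat{f}(\pi) = c_\pi I_{\H_\pi}$. Plancherel's Theorem \eqref{eq:Planch} guarantees that this data arises from a genuine $f \in L^2(K)$, since $\sum_\pi d_\pi \Norm{\hat f(\pi)}_{\HS}^2 = \sum_\pi d_\pi^2 c_\pi^2 < \infty$, and each $\hat{f}(\pi)$ is invertible, so $f$ is cyclic.

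The bulk of the argument is elementary once Theorem \ref{thm:ranGen} is in hand; the point requiring the most care is the measure-theoretic claim that $G \neq K$ genuinely forces $\dim L^2(K\backslash G) \ge 2$. This is where the structure of $K\backslash G$ as a measure space---built from the Feldman--Greenleaf cross-section and the isomorphism $T$ of \eqref{eq:measIsom}---enters: if $G \neq K$ there are at least two distinct cosets, and one must confirm that the induced measure does not collapse them, so that $L^2(K\backslash G)$ is at least two-dimensional. Granting this, the dimension count closes the forward implication, while the explicit Fourier-side construction handles the converse.
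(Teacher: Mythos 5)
Your proposal is correct, and its forward direction is the same argument the paper gives: by Theorem \ref{thm:ranGen} the range function of $S(\{f\})$ is $J(\pi)=\ran (Zf)(\pi)$, whose dimension is at most $d_\pi$, and this can never equal $L^2(K\backslash G;\H_\pi)$ when $K\subsetneq G$. The one place you differ is the converse: the paper simply cites Greenleaf--Moskowitz \cite{GM} for the fact that every subrepresentation of the regular representation is cyclic, whereas you construct a cyclic vector explicitly by prescribing $\hat{f}(\pi)=c_\pi I_{\H_\pi}$ with $\sum_\pi d_\pi^2 c_\pi^2<\infty$ and invoking surjectivity of the Plancherel unitary. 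That construction is valid ($\hat{K}$ is countable by separability of $L^2(K)$, and each $\hat f(\pi)$ is invertible, so $J(\pi)=\H_\pi$ for all $\pi$), it keeps the proof self-contained, and it is exactly the device the paper itself uses later in the proof of Theorem \ref{thm:KFrmDim}, so nothing is lost. Finally, the measure-theoretic point you flag --- that $K\subsetneq G$ forces $\dim L^2(K\backslash G)\geq 2$ --- is also left implicit in the paper's strict inequality $d_\pi<\dim L^2(K\backslash G;\H_\pi)$; it can be settled quickly: if $K$ is open in $G$, each of the (at least two) cosets is an atom of positive measure, and if $K$ is not open, each coset is null under the product identification \eqref{eq:measIsom}, so the measure on $K\backslash G$ is nonatomic and $L^2(K\backslash G)$ is infinite-dimensional. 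With that observation supplied, your argument is complete.
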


\begin{proof}
The $K$-invariant space $L^2(G)$ corresponds with the range function $J'$ given by
\[ J'(\pi) = L^2(K\backslash G; \H_\pi) \qquad (\pi \in \hat{K}). \]
If $K \subsetneq G$, then any $f \in L^2(G)$ has
\[ \rank (Zf)(\pi) \leq d_\pi < \dim L^2(K\backslash G ; \H_\pi) \qquad (\pi \in \hat{K}). \]
By the previous theorem, the range function $J$ associated with $S(\{f\})$ has $J(\pi) = \ran (Zf)(\pi) \neq J'(\pi)$ for each $\pi \in \hat{K}$. Hence,
\[ S(\{f\}) = V_J \neq V_{J'} = L^2(G). \]

When $G=K$, on the other hand, it is well known that every subrepresentation of the regular representation is cyclic. See, for instance, \cite{GM}.
\end{proof}

We will now study the correspondence between range functions and $K$-invariant spaces in greater detail. Roughly speaking, we will see that the map $V_J \mapsto J$ allows us to view the lattice of $K$-invariant spaces as a much simpler lattice of linear subspaces. Many of the ideas that follow will appear again in our analysis of invariant subspaces of general representations of compact groups in Section \ref{sec:multGen}.

To begin, we introduce the notion of direct sum for range functions. If $J$ and $J'$ are two range functions in the same family $\mathscr{H} = \{\H(x)\}_{x\in X}$, with the property that $J(x) \perp J'(x)$ for every $x \in X$, then we say that $J$ and $J'$ are \emph{orthogonal}, and write $J \perp J'$. Given a family $\{J_\alpha\}_{\alpha \in A}$ of pairwise orthogonal range functions in $\mathscr{H}$, we denote $\oplus_{\alpha \in A} J_\alpha$ for the range function in $\mathscr{H}$ given by
\[ [\bigoplus_{\alpha \in A} J_\alpha](x) = \bigoplus_{\alpha \in A} [J_\alpha(x)] \qquad (x \in X). \]

Let $J$ and $J'$ be two range functions in $\{ L^2(K\backslash G; \H_\pi) \}_{\pi \in \hat{K}}$. For each $\pi \in \hat{K}$, we view $B(\H_\pi, J(\pi) )$ and $B(\H_\pi, J'(\pi))$ as closed subspaces of $B(\H_\pi, L^2(K\backslash G; \H_\pi))$, with the inner product
\[ \langle A,B \rangle =  d_\pi \langle A, B \rangle_{\HS}. \]
Then $B(\H_\pi, J(\pi) )$ is orthogonal to $B(\H_\pi, J'(\pi))$ if and only if $J(\pi) \perp J'(\pi)$. Since $Z$ is unitary and
\[ Z(V_J) = \bigoplus_{\pi \in \hat{K}} B(\H_\pi, J(\pi) ), \]
we conclude that
\begin{equation} \label{eq:ranPerp}
J \perp J' \iff V_J \perp V_{J'}.
\end{equation}
Moreover, if $\{J_\alpha\}_{\alpha \in A}$ is a family of range functions in $\{ L^2(K\backslash G; \H_\pi) \}_{\pi \in \hat{K}}$, then
\begin{equation} \label{eq:ranSum}
J = \bigoplus_{\alpha \in A} J_\alpha \iff V_J = \bigoplus_{\alpha \in A} V_{J_\alpha}.
\end{equation}

With these simple observations, we can easily describe all possible decompositions of $V_J$ as a direct sum of irreducible subspaces.

\begin{theorem} \label{thm:ranDecomp}
Let $J$ be a range function in $\{ L^2(K\backslash G; \H_\pi)\}_{\pi \in \hat{K}}$. For each $\pi \in \hat{K}$, choose an orthonormal basis $\{F_i^\pi\}_{i \in I_\pi}$ for $J(\pi)$.\footnote{If $J(\pi) = \{0\}$, we take $I_\pi$ to be the empty set.} Then
\[ V_{\pi,i} := \{ f \in L^2(G) : \ran (Zf)(\pi) \subset \spn\{F_i^\pi\},\text{ and }(Zf)(\sigma) = 0 \text{ for }\sigma \neq \pi\text{ in }\hat{K}\} \]
is an irreducible $K$-invariant space for each $\pi \in \hat{K}$ and $i \in I_\pi$, and
\begin{equation} \label{eq:ranDecomp1}
V_J = \bigoplus_{\pi \in \hat{K}} \bigoplus_{i \in I_\pi} V_{\pi,i}.
\end{equation}
Moreover, every decomposition of $V_J$ as a direct sum of irreducible $K$-invariant spaces occurs in this way.
\end{theorem}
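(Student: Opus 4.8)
The plan is to reduce everything to the dictionary between range functions and $K$-invariant subspaces from Theorem~\ref{thm:ranTran}, the multiplicity formula of Proposition~\ref{prop:isoComp}, and the direct-sum correspondences \eqref{eq:ranPerp} and \eqref{eq:ranSum}. The point is that all the representation-theoretic content has already been packaged into those statements, so the proof should be essentially bookkeeping about range functions.

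First I would dispatch the two concrete claims. For fixed $\pi \in \hat{K}$ and $i \in I_\pi$, note that $V_{\pi,i} = V_{J_{\pi,i}}$, where $J_{\pi,i}$ is the range function equal to $\spn\{F_i^\pi\}$ at the index $\pi$ and $\{0\}$ at every other index; in particular $V_{\pi,i}$ is automatically $K$-invariant. The multiplicity formula \eqref{eq:multRan} then gives $\mult(\sigma, \rho_{J_{\pi,i}}) = \dim J_{\pi,i}(\overline{\sigma})$, which equals $1$ when $\sigma = \overline{\pi}$ and $0$ otherwise. Thus $\rho_{J_{\pi,i}}$ consists of a single irreducible constituent (of type $\overline{\pi}$) with multiplicity one, so $V_{\pi,i}$ is irreducible. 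For the decomposition \eqref{eq:ranDecomp1}, since $\{F_i^\pi\}_{i\in I_\pi}$ is an orthonormal basis of $J(\pi)$, the range functions $\{J_{\pi,i}\}_{\pi,i}$ are pairwise orthogonal and $J = \bigoplus_{\pi}\bigoplus_{i\in I_\pi} J_{\pi,i}$; applying \eqref{eq:ranSum} yields $V_J = \bigoplus_{\pi}\bigoplus_{i\in I_\pi} V_{\pi,i}$.

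The substance is the final sentence. Suppose $V_J = \bigoplus_{\alpha\in A} W_\alpha$ is an arbitrary decomposition into irreducible $K$-invariant subspaces. By Theorem~\ref{thm:ranTran}, each $W_\alpha = V_{J_\alpha}$ for a unique range function $J_\alpha$, and since the $W_\alpha$ are pairwise orthogonal with direct sum $V_J$, relation \eqref{eq:ranPerp} shows the $J_\alpha$ are pairwise orthogonal while \eqref{eq:ranSum} gives $J = \bigoplus_{\alpha} J_\alpha$. Now irreducibility enters: because $\rho_{J_\alpha}$ is irreducible, exactly one multiplicity $\mult(\sigma, \rho_{J_\alpha}) = \dim J_\alpha(\overline{\sigma})$ equals $1$ and all others vanish. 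As $\sigma \mapsto \overline{\sigma}$ is a bijection of $\hat{K}$, this means $J_\alpha$ is supported at a single index $\pi_\alpha \in \hat{K}$, where it is a one-dimensional subspace $L_\alpha \subset J(\pi_\alpha)$. Grouping the indices by the value of $\pi_\alpha$, the identity $J = \bigoplus_\alpha J_\alpha$ restricted to each $\pi$ says that $J(\pi)$ is the orthogonal direct sum of the lines $L_\alpha$ with $\pi_\alpha = \pi$. Choosing a unit vector in each such line produces an orthonormal basis of $J(\pi)$ indexed by $\{\alpha : \pi_\alpha = \pi\}$, and with respect to this basis $W_\alpha = V_{\pi_\alpha, i}$ for the corresponding index $i$. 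Hence the given decomposition coincides with one of the decompositions \eqref{eq:ranDecomp1}.

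The main obstacle is this last irreducibility step: extracting from the single hypothesis ``$W_\alpha$ is irreducible'' the structural conclusion that $J_\alpha$ lives in one coordinate and is one-dimensional there. Everything hinges on the multiplicity formula \eqref{eq:multRan}, and one must track the contragredient carefully, since irreducibility of $\rho_{J_\alpha}$ of type $\overline{\pi}$ corresponds to $J_\alpha$ being supported at $\pi$ rather than at $\overline{\pi}$; the bijectivity of the contragredient makes this bookkeeping harmless. A minor point is that $V_{\pi,i}$ depends only on the line $\spn\{F_i^\pi\}$ and not on the chosen unit vector, so the freedom in selecting unit vectors in the lines $L_\alpha$ does not affect the identification.
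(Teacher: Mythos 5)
Your proposal is correct and follows essentially the same route as the paper's own proof: identify $V_{\pi,i}$ with $V_{J_{\pi,i}}$, use the multiplicity formula \eqref{eq:multRan} for irreducibility, and use \eqref{eq:ranSum} together with the forced one-dimensional, single-index support of each $J_\alpha$ to show every irreducible decomposition arises from orthonormal bases of the $J(\pi)$. Your extra care with the contragredient bookkeeping is a welcome clarification but does not change the argument.
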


In terms of the Zak transform, the direct sum decomposition \eqref{eq:ranDecomp1} simply says that
\[ Z(V_J) = \bigoplus_{\pi \in \hat{K}} \bigoplus_{i \in I_\pi} B(\H_\pi,\spn\{F_i^\pi\}). \]
We can think of $\spn\{F_i^\pi\}$ as being a copy of $\C$, so that $B(\H_\pi,\spn\{F_i^\pi\})$ is like a copy of $\H_\pi^*$. It will therefore come as no surprise that the corresponding action of $K$ on $B(\H_\pi, \spn\{F_i^\pi\})$ is unitarily equivalent to $\overline{\pi}$.

\begin{proof}
For each $\pi \in \hat{K}$ and each $i = 1,\dotsc,d_\pi$, let $J_{\pi,i}$ be the range function given by
\[ J_{\pi,i}(\sigma) = \begin{cases}
\spn\{F_i^\pi\}, & \text{if }\sigma = \pi \\
\{0\}, & \text{if }\sigma \neq \pi
\end{cases}  \qquad (\sigma \in \hat{K}). \]
Then $V_{\pi,i} = V_{J_{\pi,i}}$, and the direct sum decomposition \eqref{eq:ranDecomp1} follows immediately from \eqref{eq:ranSum}. If $\rho_{\pi,i}$ is the action of $K$ on $V_{\pi,i}$ by left translation, then $\rho_{\pi,i} \cong \overline{\pi}$ by  \eqref{eq:multRan}. In particular, $V_{\pi,i}$ is irreducible.

Suppose
\begin{equation} \label{eq:ranDecomp2}
V_J = \bigoplus_{\alpha \in A} V_\alpha
\end{equation}
is another decomposition of $V_J$ into irreducible $K$-invariant spaces. Each $V_\alpha$ has the form $V_{J_\alpha}$ for some range function $J_\alpha$, and \eqref{eq:multRan} shows that $J_\alpha(\pi)$ is one dimensional for exactly one $\pi \in \hat{K}$, and trivial for all others. For that unique value of $\pi$, we choose a unit norm vector $G_\alpha \in J_\alpha(\pi)$.

Applying \eqref{eq:ranSum} again, we see that $J = \bigoplus_{\alpha \in A} J_\alpha$. In particular,
\[ J(\pi) = \bigoplus_{\substack{\alpha \in A, \\ J_\alpha(\pi) \neq \{0\}}} J_\alpha(\pi) = \bigoplus_{\substack{\alpha \in A, \\ J_\alpha(\pi) \neq \{0\}}} \spn\{G_\alpha\} \]
for each $\pi \in \hat{K}$. Hence $\{G_\alpha : \alpha \in A,\, J_\alpha(\pi) \neq \{0\} \}$ is an orthonormal basis for $J(\pi)$. Rearranging the decomposition \eqref{eq:ranDecomp2} as
\[ V_J = \bigoplus_{\pi \in \hat{K}} \bigoplus_{\substack{\alpha \in A, \\ J_\alpha(\pi) \neq \{0\}}} V_\alpha \]
shows it has the same form as \eqref{eq:ranDecomp1}.
\end{proof}

\medskip

%FRAMES GENERATED BY TRANSLATIONS==========================================================
\section{Frames of translates} \label{sec:tranFrm}

There is a long tradition of combining range function classifications of invariant spaces with conditions for a family of translates to form a reproducing system. Bownik \cite{B} seems to have the first results along these lines. His example was followed in \cite{BHP2,BR,CP,CMO,I,KR}. We now carry that tradition to the setting of compact, nonabelian subgroups. For our purposes, the relevant notion will be a continuous version of frames.

\begin{defn}
Let $\H$ be a separable Hilbert space, and let $(\M,\mu)$ be a $\sigma$-finite measure space. Let $\{f_x\}_{x\in \M}$ be an indexed family with the property that $x \mapsto \langle g, f_x\rangle$ is a measurable function on $\M$ for every $g \in \H$. Then $\{f_x\}_{x\in \M}$ is a \emph{Bessel mapping} if there is a constant $B > 0$ such that
\[ \int_\M | \langle g, f_x \rangle |^2\, d\mu(x) \leq B \Norm{g}^2 \qquad \text{for every }g \in \H. \]
It is a \emph{continuous frame} for $\H$ if there are constants $0 < A \leq B < \infty$ such that 
\[ A \Norm{g}^2 \leq \int_\M | \langle g, f_x \rangle |^2\, d\mu(x) \leq B \Norm{g}^2 \qquad \text{for every }g \in \H. \]
The constants $A$ and $B$ are called \emph{bounds}. If we can take $A=B$, the frame is \emph{tight}. If we can take $A=B=1$, it is a \emph{Parseval frame}.
\end{defn}

The reader unfamiliar with this notion may consult \cite{AAG,K}, where it was originally developed. Further details are available in \cite{GH} and \cite{RND}. In the case where $\M$ is a discrete set equipped with counting measure, continuous frames reduce to the usual, discrete version. (The reader may even take this as a definition.) We will use the terms ``frame'' and ``continuous frame'' interchangeably.

The usual reproducing properties of discrete frames carry over to the continuous versions, with predictable modifications. Let $\{f_x \}_{x\in \M}$ be a Bessel mapping. The associated \emph{analysis operator} $T \colon \H \to L^2(\M)$ is defined by
\[ (Tg)(x) = \langle g, f_x \rangle \qquad (g \in \H,\ x \in \M); \]
its adjoint is the \emph{synthesis operator} $T^*\colon L^2(\M) \to \H$, 
\[ T^* \phi = \int_\M \phi(x) f_x\, d\mu(x) \qquad (\phi \in L^2(\M)), \]
where the vector-valued integral is interpreted in the weak sense. The \emph{Gramian} is $\G = T T^*$, and the \emph{frame operator} is $S = T^* T$. When our Bessel mapping is a continuous frame, the frame operator is positive and invertible, and $\{S^{-1/2} f_x \}_{x\in \M}$ is a continuous Parseval frame for $\H$, called the \emph{canonical tight frame}. For Parseval frames, the frame operator is the identity map, and the Gramian is an orthogonal projection. Even when the frame is not tight, $\{S^{-1} f_x\}_{x\in \M}$ is another frame for $\H$ which satisfies
\[ g = \int_\M \langle g, S^{-1} f_x \rangle f_x\, d\mu(x) \qquad (g \in \H). \]

\begin{rem} \label{rem:frmBnds}
The results in this paper apply for arbitrary second countable compact groups, which includes finite groups in particular. When $K$ is finite, all of our results about continuous frames indexed by $K$ can be interpreted in terms of discrete frames. We caution that it is necessary to reinterpret the frame bounds in this case, since Haar measure on $K$ is normalized so that $|K|=1$. In the special case where $K$ is finite, a continuous frame over $K$ having bounds $A,B$ is the same as a discrete frame indexed by $K$ having bounds $\card(K)\cdot A, \card(K)\cdot B$.
\end{rem}

For a countable family $\A \subset L^2(G)$, we will denote
\[ E(\A) = \{L_\xi f\}_{\xi \in K, f \in \A} \]
for the translates of $\A$. Recall that
\[ S(\A) = \overline{\spn}\, E(\A) \]
is the $K$-invariant space generated by $\A$, and that $S(\A) = V_J$, with
\begin{equation} \label{eq:SARan}
J(\pi) = \overline{\spn} \{ \ran (Zf)(\pi) : f \in \A \} \qquad (\pi \in \hat{K}).
\end{equation}
We would like to know under what circumstances $E(\A)$ forms a continuous frame for $S(\A)$. Our main result is as follows.

\begin{theorem} \label{thm:tranFrm}
Let $\A \subset L^2(G)$ be a countable family of functions, and let $J$ be the range function in \eqref{eq:SARan}. For any constants $0 < A \leq B < \infty$ and any choice of orthonormal bases $e_1^\pi,\dotsc,e_{d_\pi}^\pi \in \H_\pi$, $\pi \in \hat{K}$, the following are equivalent.
\begin{enumerate}[(i)]
\item $E(\A)$ is a continuous frame for $S(\A)$ with bounds $A,B$. That is,
\begin{equation} \label{eq:tranFrm0}
A \Norm{g}^2 \leq \sum_{f \in \A} \int_K |\langle g, L_\xi f \rangle|^2\, d\xi \leq B \Norm{g}^2 \qquad (g \in S(\A)).
\end{equation}
\item For every $\pi \in \hat{K}$, $\{(Zf)(\pi) e_i^\pi : f \in \A, i =1,\dotsc,d_\pi\}$ is a discrete frame for $J(\pi)$ with bounds $A,B$.
\end{enumerate}
\end{theorem}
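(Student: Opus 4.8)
The plan is to push the entire frame inequality through the Zak transform $Z$, thereby decoupling the single continuous frame condition over $K$ into the family of independent discrete frame conditions indexed by $\hat{K}$. The crux is a \emph{master identity} that rewrites the frame functional of $E(\A)$ directly in terms of the candidate frame vectors $(Zf)(\pi)e_i^\pi$.

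First I would fix $g \in S(\A)$ and $f \in \A$ and analyze the function $\xi \mapsto \langle g, L_\xi f \rangle$ on $K$. Since $Z$ is unitary (Theorem \ref{thm:Zak}) and obeys the translation rule \eqref{eq:ZakTrans}, the unitarity of each $\pi$ gives $\langle g, L_\xi f \rangle = \sum_{\pi \in \hat{K}} d_\pi \tr\bigl( \pi(\xi)\,(Zf)(\pi)^*(Zg)(\pi) \bigr)$. This exhibits $\xi \mapsto \langle g, L_\xi f \rangle$ as a function whose Fourier transform at $\pi$ is (a transpose of) $(Zf)(\pi)^*(Zg)(\pi)$, so Plancherel's theorem \eqref{eq:Planch} yields $\int_K |\langle g, L_\xi f \rangle|^2\, d\xi = \sum_\pi d_\pi \Norm{(Zf)(\pi)^*(Zg)(\pi)}_{\HS}^2$ (the transpose being irrelevant, as the Hilbert--Schmidt norm ignores it). Expanding each Hilbert--Schmidt norm against the chosen basis $e_1^\pi, \dotsc, e_{d_\pi}^\pi$ rewrites the term as $\sum_{i,j} |\langle (Zg)(\pi)e_j^\pi,\, (Zf)(\pi)e_i^\pi \rangle|^2$, and summing over $f \in \A$ produces the master identity
\[ \sum_{f \in \A} \int_K |\langle g, L_\xi f \rangle|^2\, d\xi = \sum_{\pi \in \hat{K}} d_\pi \sum_{j=1}^{d_\pi} \Bigl( \sum_{f \in \A} \sum_{i=1}^{d_\pi} |\langle (Zg)(\pi)e_j^\pi,\, (Zf)(\pi)e_i^\pi \rangle|^2 \Bigr). \]

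With this identity in hand, both implications are quick. For (ii) $\Rightarrow$ (i), note that $g \in S(\A) = V_J$ forces $(Zg)(\pi)e_j^\pi \in \ran (Zg)(\pi) \subset J(\pi)$, so hypothesis (ii) bounds the inner sum between $A\Norm{(Zg)(\pi)e_j^\pi}^2$ and $B\Norm{(Zg)(\pi)e_j^\pi}^2$; summing over $\pi$ and $j$ with weights $d_\pi$ and using $\sum_\pi d_\pi \sum_j \Norm{(Zg)(\pi)e_j^\pi}^2 = \sum_\pi d_\pi \Norm{(Zg)(\pi)}_{\HS}^2 = \Norm{g}^2$ (unitarity of $Z$) reproduces \eqref{eq:tranFrm0}. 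For (i) $\Rightarrow$ (ii), fix $\pi_0$ and $w \in J(\pi_0)$, and test (i) on the function $g$ determined by $(Zg)(\pi_0)e_1^{\pi_0} = w$, $(Zg)(\pi_0)e_j^{\pi_0} = 0$ for $j \neq 1$, and $(Zg)(\pi) = 0$ for $\pi \neq \pi_0$. This $g$ exists by surjectivity of $Z$ and lies in $V_J = S(\A)$, since its only nonzero range is $\spn\{w\} \subset J(\pi_0)$. The master identity then collapses to $d_{\pi_0} \sum_{f,i} |\langle w, (Zf)(\pi_0)e_i^{\pi_0} \rangle|^2$ while $\Norm{g}^2 = d_{\pi_0}\Norm{w}^2$, and dividing (i) by $d_{\pi_0}$ gives exactly the frame bounds for $\{(Zf)(\pi_0)e_i^{\pi_0}\}$ at $w$.

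The work is essentially bookkeeping rather than conceptual, and the main thing to get right is the index gymnastics in passing from the trace pairing to the Hilbert--Schmidt expansion, so that the frame vectors $(Zf)(\pi)e_i^\pi$ surface with the correct roles of $i$ and $j$. One should also record that $J(\pi) = \overline{\spn}\{(Zf)(\pi)e_i^\pi : f \in \A,\ i\}$ agrees with \eqref{eq:SARan}, confirming that (ii) is a frame condition for the closed span of its own vectors. Measurability and the interchange of the countable sum over $f$ with the integral over $K$ are routine, since $\A$ is countable and every term is nonnegative.
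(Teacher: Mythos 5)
Your proof is correct, and its overall skeleton is the same as the paper's: establish the ``master identity''
\[ \sum_{f \in \A} \int_K |\langle g, L_\xi f \rangle|^2\, d\xi = \sum_{\pi \in \hat{K}} d_\pi \sum_{j=1}^{d_\pi} \sum_{f \in \A}\sum_{i=1}^{d_\pi} |\langle (Zg)(\pi)e_j^\pi, (Zf)(\pi)e_i^\pi \rangle|^2, \]
pair it with $\Norm{g}^2 = \sum_\pi d_\pi \sum_j \Norm{(Zg)(\pi)e_j^\pi}^2$, sum for (ii)$\Rightarrow$(i), and test on a hand-built $g$ supported at a single $\pi$ for (i)$\Rightarrow$(ii). (Your test vector, concentrated on $e_1^{\pi_0}$, differs cosmetically from the paper's, which spreads $d_{\pi_0}^{-1/2}G$ over all $j$; both work.) Where you genuinely diverge is in how the master identity is obtained. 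The paper isolates it as Lemma \ref{lem:ZakBrack}, $(V_f g)\caret(\pi) = (Zf)(\pi)^*(Zg)(\pi)$, proved by a hands-on computation: decompose $G \cong K \times K\backslash G$, recognize $\int_G g\,\overline{L_\xi f}$ as $\int_{K\backslash G}(g_{Kx}*f_{Kx}^*)(\xi)\,d(Kx)$, and justify Fubini via a Minkowski-inequality estimate on $\int_{K\backslash G}\Norm{f_{Kx}}_{L^1(K)}^2$. You instead get it ``softly'' from unitarity of $Z$ plus the equivariance \eqref{eq:ZakTrans}, writing $\langle g, L_\xi f\rangle = \sum_\pi d_\pi \tr(\pi(\xi)(Zf)(\pi)^*(Zg)(\pi))$ and reading off the Fourier coefficients. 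This is cleaner and avoids Fubini entirely, but the one step you dismiss as routine is precisely where the content lives: you must check that this pointwise-convergent expansion \emph{is} the Peter--Weyl expansion of $V_f g$ before Plancherel applies. That is genuinely routine --- the coefficient family is square-summable with weights $d_\pi$ (operator-norm bound on $(Zf)(\pi)^*(Zg)(\pi)$ plus unitarity of $Z$), so the finite partial sums are $L^2$-Cauchy, and an a.e.-convergent subsequence identifies the $L^2$-limit with the everywhere pointwise limit --- but it deserves a sentence, since it is the replacement for the paper's entire Fubini argument. You are also right that the transpose ambiguity in the inversion convention is harmless, as only the Hilbert--Schmidt norm of $(Zf)(\pi)^*(Zg)(\pi)$ enters.
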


This is in the spirit of \cite[Theorem 2.3]{B}. When $K$ is compact and \emph{abelian}, the theorem above reduces to \cite[Theorem 5.4]{I}. If $G$ is also abelian, the same result was given in \cite[Theorem 6.10]{BHP2}. Similar results appear in \cite{BHP2,BR,CP,CMO,I,JaLem,KR,RS}.

The proof of Theorem \ref{thm:tranFrm} relies on the following lemma, which will also play a prominent role in Section \ref{sec:brack}. To each pair $f,g \in L^2(G)$, we associate the \emph{matrix element} $V_f g \in C(K)$ given by
\[ (V_f g)(\xi) = \langle g, L_\xi f \rangle \qquad (\xi \in K). \]

\begin{lemma} \label{lem:ZakBrack}
For $f, g \in L^2(G)$ and $\pi \in \hat{K}$,
\[ (V_f g)\caret(\pi) = (Zf)(\pi)^* (Zg)(\pi). \]
\end{lemma}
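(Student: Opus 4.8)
\emph{Proof proposal.} The plan is to pass from the global inner product on $L^2(G)$ to a fiberwise computation over the cosets $K\backslash G$, prove the identity one coset at a time as a statement about $L^2(K)$, and then reassemble. First I would unfold the matrix element using the measure isomorphism $T$ of \eqref{eq:measIsom} and the coset restrictions of \eqref{eq:cosetFunct}. Since $T$ induces a unitary $U\colon L^2(G) \to L^2(K\times K\backslash G)$, the $L^2(G)$ inner product factors as $\langle g, h\rangle = \int_{K\backslash G}\langle g_{Kx}, h_{Kx}\rangle_{L^2(K)}\, d(Kx)$. Applying this with $h = L_\xi f$ and invoking the translation identity \eqref{eq:tranRes}, namely $(L_\xi f)_{Kx} = L_\xi(f_{Kx})$, gives
\[ (V_f g)(\xi) = \int_{K\backslash G} \langle g_{Kx}, L_\xi(f_{Kx})\rangle_{L^2(K)}\, d(Kx) = \int_{K\backslash G} (V_{f_{Kx}} g_{Kx})(\xi)\, d(Kx), \]
where $V_a b$ denotes the analogous matrix element computed inside $L^2(K)$. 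This reduces everything to the case $G = K$ plus an integral over cosets.

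Next I would establish the local identity on $K$: for $a,b \in L^2(K)$,
\[ (V_a b)\caret(\pi) = \hat{a}(\pi)^*\, \hat{b}(\pi). \]
The clean route is to recognize $V_a b$ as a convolution. Writing out $(V_a b)(\xi) = \int_K b(\eta)\overline{a(\xi^{-1}\eta)}\, d\eta$ and comparing with the definitions of $*$ and $*$-involution shows $V_a b = b * a^{*}$. Then the Fourier rules \eqref{eq:FourConv}, which reverse order under convolution and conjugate-transpose under involution, yield $(b * a^{*})\caret(\pi) = (a^{*})\caret(\pi)\,\hat{b}(\pi) = \hat{a}(\pi)^{*}\hat{b}(\pi)$, as claimed.

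To finish, I would take the Fourier transform of the coset-integral formula for $V_f g$ and interchange it with the integral over $K\backslash G$, obtaining
\[ (V_f g)\caret(\pi) = \int_{K\backslash G} (V_{f_{Kx}} g_{Kx})\caret(\pi)\, d(Kx) = \int_{K\backslash G} (f_{Kx})\caret(\pi)^{*}\, (g_{Kx})\caret(\pi)\, d(Kx), \]
using the local identity in the last step. It then remains to recognize the right-hand side as $(Zf)(\pi)^{*}(Zg)(\pi)$. Computing the composite directly from the defining formula \eqref{eq:ZakFour}: for $u,v \in \H_\pi$ the adjoint (taken with respect to the native Hilbert space inner products on $\H_\pi$ and $L^2(K\backslash G;\H_\pi)$) gives $\langle (Zf)(\pi)^{*}(Zg)(\pi)u, v\rangle = \int_{K\backslash G}\langle (g_{Kx})\caret(\pi)u, (f_{Kx})\caret(\pi)v\rangle\, d(Kx)$, which is exactly the weak form of the displayed integral. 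Matching the two expressions proves the lemma.

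I expect the main obstacle to be bookkeeping rather than conceptual. Two points deserve care. First, the interchange of the Fourier transform (an integral over $K$) with the coset integral needs a Fubini justification; this is supplied by the uniform bound $|(V_{f_{Kx}} g_{Kx})(\xi)| \le \Norm{f_{Kx}}_{L^2(K)}\Norm{g_{Kx}}_{L^2(K)}$ together with Cauchy--Schwarz in $L^2(K\backslash G)$, giving integrability over $K\times K\backslash G$. Second, one must keep the inner product normalizations straight: the factor $d_\pi$ appearing in the operator-space inner products $\langle A,B\rangle = d_\pi\tr(B^{*}A)$ is a Plancherel convention and does not enter the operator adjoint $(Zf)(\pi)^{*}$, which is computed from the unscaled inner products; the asserted equality is an identity of operators in $B(\H_\pi)$ and is therefore unaffected by that scaling. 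If one prefers to sidestep the measurability technicalities entirely, the identity can first be verified on the dense set of finite sums of simple tensors under $U$ and then extended to all of $L^2(G)$ by continuity, since both sides depend continuously on $f$ and $g$.
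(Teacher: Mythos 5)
Your proposal is correct and follows essentially the same route as the paper's proof: decompose $G$ over the cosets $K\backslash G$ via the measure isomorphism \eqref{eq:measIsom}, identify the matrix element fiberwise as the convolution $g_{Kx} * f_{Kx}^*$, apply the Fourier convolution rule \eqref{eq:FourConv}, interchange with the coset integral by Fubini, and recognize the resulting integral as the weak form of $(Zf)(\pi)^*(Zg)(\pi)$. The only notable difference is your Fubini justification (a pointwise Cauchy--Schwarz bound $|\langle g_{Kx}, L_\xi f_{Kx}\rangle| \le \Norm{f_{Kx}}_{L^2(K)}\Norm{g_{Kx}}_{L^2(K)}$ followed by Cauchy--Schwarz over $K\backslash G$), which is somewhat cleaner than the paper's $L^1$/Minkowski argument but serves the same purpose.
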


\begin{proof}
Fix an orthonormal basis $e_1^\pi,\dotsc,e_{d_\pi}^\pi$ for each $\H_\pi$, $\pi \in \hat{K}$. For $f, g \in L^2(G)$, $\pi \in \hat{K}$, and $i,j=1,\dotsc,d_\pi$, the $(i,j)$-entry of the matrix for $(V_f g)\caret(\pi)$ with respect to this basis is
\[ (V_f g)\caret(\pi)_{i,j} = \int_K \int_G g(x) \overline{(L_\xi f)(x)}\, dx\, \overline{\pi_{i,j}(\xi)}\, d\xi. \]
Applying the measure space isomorphism $G \to K\backslash G \times K$ from \eqref{eq:measIsom}, we see this is equal to
\[ \int_K \int_{K\backslash G} \int_K g_{Kx}(\eta) \overline{ f_{Kx}(\xi^{-1} \eta) }\, d\eta\, d(Kx)\, \overline{\pi_{i,j}(\xi)}\, d\xi = \int_K \int_{K\backslash G} (g_{Kx} * f_{Kx}^*)(\xi)\, d(Kx)\, \overline{\pi_{i,j}(\xi)}\, d\xi,  \]
where $f_{Kx}$ and $g_{Kx}$ are as defined in \eqref{eq:cosetFunct}. We wish to reverse the order of integration above with Fubini's Theorem. Assuming for the moment that this is possible, we will have
\[ (V_f g)\caret(\pi)_{i,j} = \int_K \int_{K\backslash G} (g_{Kx} * f_{Kx}^*)(\xi)\, d(Kx)\, \overline{\pi_{i,j}(\xi)}\, d\xi \]
\[ = \int_{K\backslash G} \int_K  (g_{Kx} * f_{Kx}^*)(\xi) \overline{\pi_{i,j}(\xi)}\, d\xi\, d(Kx)  = \int_{K\backslash G} (g_{Kx} * f_{Kx}^*)\caret(\pi)_{i,j}\, d(Kx) \]
\[ = \int_{K\backslash G} \langle (f_{Kx})\caret(\pi)^* (g_{Kx})\caret(\pi) e_j^\pi, e_i^\pi \rangle\, d(Kx) = \int_{K\backslash G} \langle (g_{Kx})\caret(\pi) e_j^\pi, (f_{Kx})\caret(\pi) e_i^\pi \rangle\, d(Kx) \]
\[  = \int_{K\backslash G} \langle [(Zg)(\pi)e_j^\pi](Kx), [(Zf)(\pi) e_i^\pi](Kx) \rangle\, d(Kx) = \langle (Zg)(\pi) e_j^\pi, (Zf)(\pi) e_i^\pi \rangle \]
\[  = [(Zf)(\pi)^* (Zg)(\pi)]_{i,j}, \]
where we have applied the definition of the Zak transform \eqref{eq:ZakFour} in the third to last equality. Once the above holds for all $i$ and $j$, we will be able to conclude that
\[ (V_f g)\caret(\pi) = (Zf)(\pi)^* (Zg)(\pi), \]
as desired.

It only remains to justify our use of Fubini's Theorem. To do so, we observe first that
\[ |\pi_{i,j}(\xi)| = | \langle \pi(\xi) e_j^\pi, e_i^\pi \rangle | \leq \Norm{\pi(\xi) e_j^\pi} \Norm{e_i^\pi} = 1 \qquad (\xi \in K), \]
by Cauchy-Schwarz. Hence,
\[ \int_{K\backslash G} \int_K | (g_{Kx} * f_{Kx}^*)(\xi) \pi_{i,j}(\xi)|\, d\xi\, d(Kx) \leq \int_{K\backslash G} \Norm{g_{Kx} * f_{Kx}^*}_{L^1(K)}\, d(Kx) \]
\[ \leq \int_{K\backslash G} \Norm{g_{Kx}}_{L^1(K)} \Norm{f_{Kx}}_{L^1(K)}\, d(Kx) \leq \left( \int_{K\backslash G} \Norm{g_{Kx}}_{L^1(K)}^2\, d(Kx) \right)^{1/2} \left( \int_{K\backslash G} \Norm{f_{Kx}}_{L^1(K)}^2\, d(Kx) \right)^{1/2}. \]

The proof will be finished if we can show that $\int_{K\backslash G} \Norm{f_{Kx}}_{L^1(K)}^2 d(Kx) < \infty$ for all $f \in L^2(G)$. An application of Minkowski's Integral Inequality produces
\[ \left( \int_{K\backslash G} \Norm{ f_{Kx} }_{L^1(K)}^2\, d(Kx) \right)^{1/2} = \left( \int_{K\backslash G} \left| \int_K |f(\eta \tau(Kx))|\, d\eta \right|^2 d(Kx) \right)^{1/2} \]
\[ \leq \int_K \left( \int_{K\backslash G} | f(\eta \tau(Kx)) |^2\, d(Kx) \right)^{1/2} d\eta. \]
Let
\begin{align*}
E &= \{ \eta \in K : \int_{K\backslash G} | f(\eta \tau(Kx) |^2\, d(Kx) < 1\} \\
F &= \{ \eta \in K : \int_{K\backslash G} |f(\eta \tau(Kx) |^2\, d(Kx) \geq 1\}.
\end{align*}
(These are well defined up to sets of measure zero.) Then
\[  \left( \int_{K\backslash G} \Norm{ f_{Kx} }_{L^1(K)}^2\, d(Kx) \right)^{1/2} \leq \int_K \left( \int_{K\backslash G} | f(\eta \tau(Kx)) |^2\, d(Kx) \right)^{1/2} d\eta \]
\[ = \int_E \left( \int_{K\backslash G} | f(\eta \tau(Kx)) |^2\, d(Kx) \right)^{1/2} d\eta + \int_F \left( \int_{K\backslash G} | f(\eta \tau(Kx)) |^2\, d(Kx) \right)^{1/2} d\eta \]
\[ \leq |E| + \int_F  \int_{K\backslash G} | f(\eta \tau(Kx)) |^2\, d(Kx) \, d\eta \leq 1 + \int_K \int_{K\backslash G} | f(\eta \tau(Kx)) |^2\, d(Kx) \, d\eta \]
\[ = 1 + \int_G | f(x) |^2\, dx < \infty, \]
where we have once again applied the measure space isomorphism $K \times K\backslash G \to G$. This completes the proof.
\end{proof}

With this lemma in hand, Theorem \ref{thm:tranFrm} becomes an easy consequence of Plancherel's Theorem and our classification of $K$-invariant spaces.

\medskip

\begin{proof}[Proof of Theorem \ref{thm:tranFrm}] For any $f,g \in L^2(G)$, we use Plancherel's Theorem and Lemma \ref{lem:ZakBrack}  to perform the fundamental calculation
\begin{equation}\label{eq:tranFrm1}
\int_K |\langle g, L_\xi f \rangle |^2\, d\xi =  \sum_{\pi \in \hat{K}} d_\pi \Norm{ (Zf)(\pi)^* (Zg)(\pi)}_{\HS}^2 = \sum_{\pi \in \hat{K}} d_\pi \sum_{j=1}^{d_\pi} \sum_{i=1}^{d_\pi} |\langle (Zg)(\pi) e_j^\pi, (Zf)(\pi) e_i^\pi \rangle |^2.
\end{equation}
On the other hand, the fact that $Z$ is unitary implies
\begin{equation} \label{eq:tranFrm2}
\Norm{g}^2 = \sum_{\pi \in \hat{K}} d_\pi \Norm{ (Zg)(\pi) }_{\HS}^2 = \sum_{\pi \in \hat{K}} d_\pi \sum_{j=1}^{d_\pi} \Norm{ (Zg)(\pi)e_j^\pi }^2.
\end{equation}

Suppose (i) holds. Fix $\pi \in \hat{K}$, and choose any $G \in J(\pi)$. Define $g \in L^2(G)$ by the formula
\[ (Zg)(\sigma)e_j^\sigma = \begin{cases} 
d_\pi^{-1/2} G, & \text{if }\sigma = \pi \\ 
0, & \text{if }\sigma \neq \pi
\end{cases} \qquad (\sigma \in \hat{K};\ j = 1,\dotsc,d_\sigma). \]
Then $g \in V_J = S(\A)$, by construction. It satisfies
\[ \Norm{g}^2 =\Norm{G}^2, \]
by \eqref{eq:tranFrm2}, and
\[ \sum_{f\in \A} \int_K | \langle g, L_\xi f \rangle |^2\, d\xi = \sum_{f\in \A}  \sum_{i=1}^{d_\pi} |\langle G, (Zf)(\pi) e_i^\pi \rangle |^2, \]
by \eqref{eq:tranFrm1}. Substituting these equations into \eqref{eq:tranFrm0} gives
\[ A \Norm{G}^2 \leq \sum_{f\in \A}  \sum_{i=1}^{d_\pi} |\langle G, (Zf)(\pi) e_i^\pi \rangle |^2 \leq B \Norm{G}^2. \]
In other words, (ii) holds.

Now assume (ii) is satisfied. For every $g \in S(\A) = V_J$ and every $\pi \in \hat{K}$, $(Zg)(\pi)e_j^\pi \in J(\pi)$. By \eqref{eq:tranFrm2} and the frame inequality,
\[ A \Norm{g}^2 = \sum_{\pi \in \hat{K}} d_\pi \sum_{j=1}^{d_\pi} A \Norm{ (Zg)(\pi)e_j^\pi }^2 \leq \sum_{\pi \in \hat{K}} d_\pi \sum_{j=1}^{d_\pi} \sum_{f \in \A} \sum_{i=1}^{d_\pi} |\langle (Zg)(\pi) e_j^\pi, (Zf)(\pi) e_i^\pi \rangle |^2. \]
Applying \eqref{eq:tranFrm1} to the last expression above, we see that
\[ A \Norm{g}^2 \leq \sum_{f \in \A} \int_K |\langle g, L_\xi f \rangle |^2\, d\xi. \]
A similar computation produces
\[ \sum_{f \in \A} \int_K |\langle g, L_\xi f \rangle |^2\, d\xi \leq B \Norm{g}^2. \]
This proves (i).
\end{proof}

\medskip

%BRACKET ANALYSIS FOR COMPACT GROUP ACTIONS================================================
\section{Bracket analysis for compact group actions} \label{sec:brack}

We turn our attention now to a detailed study of group frames, as described in the introduction. In this section, we introduce a computational system known as a \emph{bracket} for the analysis of representations of compact groups. Our primary motivation is the study of group frames with a single generator. We will see, however, that the bracket carries vital information about the structure of the representation itself, including its isotypical components and the multiplicities of irreducible representations. Several applications for the theory of group frames, including a complete classification of (compact) group frames with a single generator, appear in Section \ref{sec:brackApp}. Throughout, we fix a second countable compact group $K$, as in the previous sections, with Haar measure normalized so that $|K|=1$. We also fix a unitary representation $\rho$ of $K$, acting on a separable Hilbert  space $\H_\rho$.

Our approach is motivated by the work of Weiss, et al.\ in \cite{HSWW2}. Let $\G$ be a second countable locally compact abelian (LCA) group, with dual group $\hat{\G}$. Normalize Haar measures on $\G$ and $\hat{\G}$ so that the Plancherel theorem holds. A representation $\pi\colon \G \to U(\H_\pi)$ is called \emph{dual integrable} if there is a \emph{bracket}
\[ [ \cdot, \cdot] \colon \H_\pi \times \H_\pi \to L^1(\hat{\G}) \]
such that
\[ \langle f, \pi(x) g \rangle = \int_{\hat{\G}} [f, g](\alpha) \overline{\alpha(x)}\, d\alpha \qquad (f, g \in \H_\pi;\ x \in \G). \]
When $\G$ is identified with the dual of $\hat{\G}$ via Pontryagin Duality, this means that $\langle f, \pi(\cdot) g \rangle$ is the Fourier transform of $[f,g]$. The bracket provides an elegant description of frame properties for an orbit $\{\pi(x) f\}_{x\in \G}$.

\begin{prop}[ \cite{HSWW2,I} ] \label{prop:brackFrm}
For $f \in \H_\pi$ and constants $A,B$ with $0 < A \leq B < \infty$, the following are equivalent.
\begin{enumerate}[(i)]
\item The orbit $\{\pi(x) f\}_{x\in \G}$ is a continuous frame for its closed linear span, with bounds $A,B$
\item For a.e.\ $\alpha \in \hat{\G}$, either $[f,f](\alpha) = 0$ or $A \leq [f,f](\alpha) \leq B$.
\end{enumerate}
\end{prop}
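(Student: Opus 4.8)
The plan is to linearize the orbit of $f$ through the bracket and recast the frame inequality as an operator bound for a multiplication operator. Write $\omega = [f,f] \in L^1(\hat\G)$. Comparing the two expressions for $\langle \pi(y) f, \pi(x) f\rangle$ --- namely $\langle f, \pi(y^{-1}x) f\rangle = \int_{\hat\G}\omega(\alpha)\alpha(y)\overline{\alpha(x)}\,d\alpha$ on one hand and $\int_{\hat\G}[\pi(y)f,f](\alpha)\overline{\alpha(x)}\,d\alpha$ on the other --- and invoking uniqueness of the Fourier transform on $\G$, I would first record the covariance identity $[\pi(y)f,f](\alpha) = \alpha(y)\,\omega(\alpha)$. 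By (conjugate-)linearity of the bracket, a finite combination $g = \sum_k c_k\,\pi(y_k) f$ then satisfies $[g,f] = m_g\,\omega$ and $[g,g] = |m_g|^2\,\omega$, where $m_g(\alpha) = \sum_k c_k\,\alpha(y_k)$ is a trigonometric polynomial on $\hat\G$. Since $\Norm{g}^2 = \langle g,g\rangle = \int_{\hat\G}[g,g]\,d\alpha = \int_{\hat\G}|m_g|^2\,\omega\,d\alpha$, the weight $\omega$ is real (the bracket is Hermitian, $[g,f]=\overline{[f,g]}$) and nonnegative (the last integral is $\ge 0$ for every trigonometric polynomial).

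Next I would build the relevant isometry. The assignment $g \mapsto m_g$ extends from finite combinations to an isometry $\Phi$ of $V := \overline{\spn}\{\pi(x) f : x \in \G\}$ into $L^2(\hat\G,\omega\,d\alpha)$, the $L^2$ space of a finite measure of total mass $\Norm{f}^2$. The characters $\{\alpha \mapsto \alpha(y) : y \in \G\}$ are dense in this space by Stone--Weierstrass, so $\Phi$ is a unitary of $V$ onto $L^2(\hat\G,\omega\,d\alpha)$. For the frame quantity I would use the Plancherel theorem on $\G$, noting that $x \mapsto \langle g, \pi(x) f\rangle$ is the inverse Fourier transform of $[g,f] \in L^1(\hat\G)$ and lies in $L^2(\G)$ exactly when $[g,f]\in L^2(\hat\G)$, with equal norms; this gives, as an identity in $[0,\infty]$,
\[ \int_\G |\langle g, \pi(x) f\rangle|^2\,dx = \int_{\hat\G}|[g,f]|^2\,d\alpha = \int_{\hat\G}|m_g|^2\,\omega^2\,d\alpha = \langle M_\omega\,\Phi g,\,\Phi g\rangle_{L^2(\omega\,d\alpha)}, \]
where $M_\omega$ denotes multiplication by $\omega$ on $L^2(\hat\G,\omega\,d\alpha)$.

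Under the unitary $\Phi$, statement (i) then says precisely that $A\,I \leq M_\omega \leq B\,I$ as operators on $L^2(\hat\G,\omega\,d\alpha)$, since $\Phi$ carries $\Norm{g}^2$ to $\Norm{\Phi g}^2$ and the frame quantity to $\langle M_\omega\,\Phi g,\,\Phi g\rangle$, and $\Phi$ is onto. To finish I would apply the standard spectral fact that for the measure $\mu = \omega\,d\alpha$ the bound $A\,I \leq M_\omega \leq B\,I$ holds if and only if $A \leq \omega(\alpha) \leq B$ for $\mu$-a.e.\ $\alpha$; because $\mu$ is equivalent to Lebesgue measure on the support $\{\omega > 0\}$ and charges nothing outside it, this is exactly the dichotomy in (ii), that for a.e.\ $\alpha$ either $\omega(\alpha) = 0$ or $A \leq \omega(\alpha) \leq B$.

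I expect the two analytic junctures, rather than the algebra, to be where the work lies. The first is showing $\Phi$ is genuinely onto, i.e.\ density of the characters in $L^2(\hat\G,\omega\,d\alpha)$, which is what permits testing the operator inequality against all of $L^2$ rather than only the trigonometric polynomials. The second is the careful use of Plancherel with the $L^1$-valued bracket: I must ensure the displayed identity holds in $[0,\infty]$ so that the upper (Bessel) bound is not silently assuming finiteness, and so that the lower bound is not lost on a null set. Once $\mu$ is understood to live on the support of $\omega$, the passage from the operator bound to the pointwise dichotomy is routine.
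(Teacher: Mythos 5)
Your argument is correct, but note that the paper does not actually prove Proposition \ref{prop:brackFrm}; it is quoted from \cite{HSWW2,I}, and the closest in-paper argument is the proof of its nonabelian generalization, Theorem \ref{thm:brackFrm}. That proof follows the same strategy as yours in different clothing: Lemma \ref{lem:cycIsom} replaces the cyclic representation by a model in which $f$ becomes a function of positive type in $L^2(K)$, and the Fourier (Zak) transform then turns the Gramian into a multiplication operator whose pointwise spectral bounds are read off. Your unitary $\Phi \colon V \to L^2(\hat{\G}, \omega\,d\alpha)$ is, up to composing with the obvious unitary $h \mapsto h\,\omega^{1/2}$ into $L^2(\hat{\G}, d\alpha)$, exactly $\mathcal{F} \circ T_f$ in the abelian setting, so the two routes coincide modulo bookkeeping. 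Two small points deserve repair. First, density of the characters in $L^2(\hat{\G}, \omega\,d\alpha)$ does not follow from Stone--Weierstrass when $\hat{\G}$ is noncompact (i.e.\ when $\G$ is not discrete); the correct justification is uniqueness of Fourier--Stieltjes transforms: if $h \in L^2(\omega\,d\alpha)$ is orthogonal to every character, then the finite complex measure $\overline{h}\,\omega\,d\alpha$ has vanishing transform and hence vanishes. Second, the identity $[g,f] = (\Phi g)\,\omega$ is established only for finite combinations $g = \sum_k c_k \pi(y_k) f$; to use it for arbitrary $g \in V$ in your displayed Plancherel computation you need continuity of $g \mapsto [g,f]$ in $L^1(\hat{\G})$, which follows from the pointwise Cauchy--Schwarz inequality $|[g,f]| \leq [g,g]^{1/2} [f,f]^{1/2}$ available in \cite{HSWW2}; alternatively, verify both frame inequalities on the dense subspace of finite combinations and extend the upper one by Fatou's lemma and then the lower one by continuity of the (now finite) frame quantity. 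With these repairs the proof is complete.
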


A possible difficulty with this approach is that, generally speaking, one may know a representation is dual integrable without being able to compute the bracket.\footnote{For certain kinds of representations, there are ways to recover the bracket even when $\G$ is not compact. Most of these methods involve variants of the Zak transform. See \cite{HSWW2} and \cite{I}.} Suppose, however, that $\G$ is \emph{compact} abelian. Then we can compute brackets as follows. Let $\pi$ be \emph{any} unitary representation of $\G$ on a separable Hilbert space $\H_\pi$. Then $\pi$ decomposes as a direct sum of cyclic subrepresentations, each of which is unitarily equivalent to a subrepresentation of the regular representation. (See, for instance, \cite{GM}.) By \cite[Corollary 3.4]{HSWW2}, $\pi$ is dual integrable. Let $[\cdot,\cdot] \colon \H_\pi \times \H_\pi \to L^1(\hat{\G})$ be a bracket for $\pi$. That is,
\[ \langle f , \pi(x) g \rangle = [f,g]\caret(x) \qquad (f,g \in \H;\ x \in \G). \]
Since $\G$ is compact, $[f,g]\caret$ lies in $C(\G) \subset L^1(\G)$ for every $f,g \in \H_\pi$. Therefore we can apply Fourier inversion to recover the bracket from the matrix elements $\langle f, \pi(\cdot) g \rangle$:
\[ [f,g](\alpha) = \langle f, \pi(\cdot) g \rangle\caret(\alpha^{-1}) \qquad (f,g \in \H_\pi;\ \alpha \in \hat{\G}). \]

These results suggest that, for our general compact group $K$ with unitary representation $\rho$, it should be possible to analyze frames appearing as orbits of $\rho$ using the (operator-valued) Fourier transform of the matrix elements 
\[ (V_g f)(\xi) := \langle f, \rho(\xi) g \rangle \qquad (f,g \in \H_\rho;\ \xi \in K). \]
This is indeed the case.

\begin{defn}
The \emph{bracket} associated with $\rho$ is the map
\[ [ \cdot, \cdot ] \colon \H_\rho \times \H_\rho \to \bigoplus_{\pi \in \hat{K}} B(\H_\pi) \]
given by
\[ [f,g](\pi) = (V_g f)\caret(\pi) \qquad (\pi \in \hat{K}). \]
\end{defn}

Here, as elsewhere, we consider $B(\H_\pi)$ to be a Hilbert space with inner product given by
\[ \langle A, B \rangle = d_\pi \langle A, B \rangle_{\HS} = d_\pi \tr(B^* A). \]
Then $\bigoplus_{\pi \in \hat{K}} B(\H_\pi)$ is the Hilbert space direct sum.

Following the notation of \cite{HSWW2}, we will denote $\langle f \rangle \subset \H_\rho$ for the cyclic subspace generated by $f \in \H_\rho$. That is,
\[ \langle f \rangle = \overline{\spn} \{ \rho(\xi) f : \xi \in K\} \qquad (f \in \H_\rho). \]
Our main result is the following.

\begin{theorem} \label{thm:brackFrm}
For $f \in \H_\rho$ and constants $A,B$ with $0 < A \leq B < \infty$, the following are equivalent.
\begin{enumerate}[(i)]
\item The orbit $\{ \rho(\xi) f \}_{\xi\in K}$ is a continuous frame for $\langle f \rangle$ with bounds $A, B$.
\item For every $\pi \in \hat{K}$, the nonzero eigenvalues of $[f,f](\pi)$ lie in the interval $[A,B]$.
\end{enumerate}
\end{theorem}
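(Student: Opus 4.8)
The plan is to route everything through the frame operator and the Gramian of the orbit, and to diagonalize the Gramian using Plancherel's Theorem. First I would record that the orbit $\{\rho(\xi)f\}_{\xi \in K}$ is automatically a Bessel mapping: since $K$ is compact with $|K|=1$ and $\xi \mapsto \langle g, \rho(\xi) f\rangle = (V_f g)(\xi)$ is continuous, the analysis operator $T\colon \H_\rho \to L^2(K)$ defined by $(Tg)(\xi) = \langle g, \rho(\xi) f\rangle$ is everywhere-defined and bounded, with $\Norm{T} \le \Norm{f}$. Consequently the Gramian $\G = TT^*$ on $L^2(K)$ and the frame operator $S = T^*T$ on $\H_\rho$ are bounded positive operators, so there are no a priori integrability worries. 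Note also that condition (i) is literally the statement $A\Norm{g}^2 \le \langle Sg,g\rangle \le B\Norm{g}^2$ for $g \in \langle f\rangle$, since $\langle Sg,g\rangle = \int_K |\langle g, \rho(\xi) f\rangle|^2\, d\xi$.

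Next I would compute the Gramian explicitly. Using $T^*\phi = \int_K \phi(\xi)\rho(\xi) f\, d\xi$, a short calculation gives $(\G\phi)(\eta) = \int_K \phi(\xi)\langle f, \rho(\xi^{-1}\eta) f\rangle\, d\xi = (\phi * V_f f)(\eta)$, so $\G$ is right convolution by $V_f f$. Applying Plancherel's Theorem, under the unitary $\F\colon L^2(K) \to \bigoplus_{\pi \in \hat{K}} B(\H_\pi)$ this convolution becomes the block-diagonal operator that acts on the $\pi$-block $B(\H_\pi)$ by left multiplication by $(V_f f)\caret(\pi) = [f,f](\pi)$, by the convolution identity \eqref{eq:FourConv}. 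In particular, since $\G \ge 0$ and left multiplication by $M$ on $B(\H_\pi)$ is positive exactly when $M \ge 0$, each $[f,f](\pi)$ is a positive operator; its eigenvalues are thus the nonnegative reals appearing in (ii), and the statement of (ii) makes sense.

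The heart of the argument is the spectral bookkeeping. On the finite-dimensional Hilbert-Schmidt space $B(\H_\pi)$, left multiplication $X \mapsto [f,f](\pi)X$ is unitarily equivalent to $[f,f](\pi) \otimes I$, so its eigenvalues are precisely those of $[f,f](\pi)$, with multiplicities scaled by $d_\pi$. Taking the Hilbert-space direct sum over the (generally infinite) dual $\hat{K}$, the spectrum of $\G$ is the closure of $\bigcup_{\pi \in \hat{K}} \{\text{eigenvalues of } [f,f](\pi)\}$. Because $[A,B]$ is closed and bounded away from $0$, the nonzero part of the spectrum of $\G$ lies in $[A,B]$ if and only if every nonzero eigenvalue of every $[f,f](\pi)$ lies in $[A,B]$ — which is exactly condition (ii).

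Finally I would connect $\G$ to condition (i) by standard frame theory. Here $\langle f\rangle = \overline{\ran}\,T^* = \overline{\ran}\,S$ and $\ker S = \langle f\rangle^\perp$, and the orbit is a frame for $\langle f\rangle$ with bounds $A,B$ precisely when the spectrum of $S$ restricted to $\langle f\rangle$ lies in $[A,B]$; since $S = T^*T$ and $\G = TT^*$ have the same nonzero spectrum, this matches the condition on $\G$ established above. The one point requiring genuine care is the equivalence between ``frame bounds $A,B$'' and ``nonzero spectrum in $[A,B]$'': because $A>0$, the spectral hypothesis forces a gap at $0$, so $S$ restricted to $\ker(S)^\perp = \langle f\rangle$ is bounded below by $A$, yielding the lower frame bound (the upper bound being immediate). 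I expect this gap-at-zero argument, together with the careful passage through the infinite direct sum and the Hilbert-Schmidt spectral computation, to be the only delicate step; everything else is bookkeeping with Plancherel's Theorem and the convolution identities \eqref{eq:FourConv}.
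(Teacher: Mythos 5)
Your argument is correct, but it takes a genuinely different route from the paper's own proof of Theorem \ref{thm:brackFrm}. The paper first invokes Lemma \ref{lem:cycIsom} (which rests on the Dixmier-style isometry for square-integrable cyclic representations) to replace $f$ by a function of positive type in $L^2(K)$ with $\rho$ acting by left translation, and then specializes the translation-frame criterion of Theorem \ref{thm:tranFrm} with $G=K$, reading off condition (ii) from the eigenvalues of $\hat f(\pi)\geq 0$ and the range function $J(\pi)=\ran\hat f(\pi)$. You instead work directly with the Gramian $\G\phi=\phi*V_ff$, conjugate it by Plancherel into the block-diagonal operator of left multiplication by $[f,f](\pi)$ on each $B(\H_\pi)$, identify $\sigma(\G)$ with the closure of the union of the eigenvalue sets, and finish with the standard equivalence between frame bounds and the location of $\sigma(\G)\setminus\{0\}$ (where, as you note, the gap at $0$ forced by $A>0$ is what makes the spectral projection onto $\{0\}$ coincide with projection onto $\ker S=\langle f\rangle^\perp$). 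This is essentially the alternative proof the paper itself sketches in one sentence after Proposition \ref{prop:blkDiagGrm}, and you have filled in the details it omits: the identification of left multiplication with $[f,f](\pi)\otimes I$, the spectrum of an infinite self-adjoint block direct sum as the closure of the union of block spectra, and the passage between $\sigma(T^*T)$ and $\sigma(TT^*)$ off zero. What your route buys is self-containment and elementarity --- it needs only Plancherel, the convolution identity \eqref{eq:FourConv}, and spectral theory, and it delivers the optimal bounds as $\inf$ and $\sup$ of $\sigma(\G)\setminus\{0\}$ for free; what the paper's route buys is reuse of the Zak-transform/range-function machinery already built in Sections \ref{sec:ranTran}--\ref{sec:tranFrm}, which simultaneously identifies $\langle f\rangle$ as $V_J$ and situates the single-generator case inside the general translation-invariance framework. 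Both proofs are sound.
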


When $\dim \H_\rho < \infty$, it is easy to tell when $\langle f \rangle = \H_\rho$ using the ranks of $[f,f](\pi)$, $\pi \in \hat{K}$; see Proposition \ref{prop:cycBrack} below. Thus, one can tell whether or not $\{\rho(\xi) f \}_{\xi \in K}$ is a frame for $\H_\rho$, and with what bounds, based solely on the eigenvalues of $[f,f](\pi)$, $\pi \in \hat{K}$, and their multiplicities. The condition that $\dim \H_\rho < \infty$ is always satisfied when $\{\rho(\xi) f \}_{\xi \in K}$ is a frame for $\H_\rho$; this is a consequence of Theorem \ref{thm:KFrmDim}, infra.

If $Q_\pi$ denotes orthogonal projection of $\H_\pi$ onto $(\ker [f,f](\pi))^\perp$, then condition (ii) of the theorem above can be interpreted to say that $A Q_\pi \leq [f,f](\pi) \leq B Q_\pi$ for each $\pi \in \hat{K}$. (Compare with \cite[Theorem A]{BHP}.) In the special case where $K$ is compact \emph{abelian}, Theorem \ref{thm:brackFrm} reduces to Proposition \ref{prop:brackFrm}.

Tight frames generated by actions of \emph{finite} nonabelian groups have been the focus of a flurry of recent activity \cite{CW,CW2,Ha,VW2,VW,VW3}. See \cite[Theorem 6.18]{VW2} and its generalization \cite[Theorem 2.8]{VW3} in particular for another characterization of \emph{tight} frames that occur in this way. A nice summary of the state of the art circa 2013 appears in \cite{Wa}; unfortunately the survey is already out of date, thanks in part to recent work by Waldron himself. This field is advancing rapidly.

Brackets have been used to analyze reproducing systems in $L^2(\R^n)$ since at least the work of Jia and Micchelli \cite{JM}. Weiss and his collaborators brought these techniques into the group-theoretic domain with \cite{HSWW2}, as described above. In the nonabelian setting, Hern{\'a}ndez, et al.\ have developed notions of bracket maps for the Heisenberg group and for countable discrete groups \cite{BHM,BHP,BHP3}.

The bracket defined above is related to the one that appears in \cite{BHP,BHP3}. Suppose that $K$ is finite (that is, both compact and discrete). Let us write $[\cdot,\cdot]_0 \colon \H_\rho \times \H_\rho \to B(L^2(K))$ for the bracket as developed in \cite{BHP}. One can show that, for all $f,g \in \H_\rho$,
\[ [f,g]_0(\phi) = \phi * V_g f \qquad (\phi \in L^2(K)). \]
Conjugating with the Fourier transform turns $[f,g]_0$ into left multiplication by $[f,g]$. 
One might say the papers \cite{BHP,BHP3} study the convolution operator given by $V_g f$, where this paper studies its Fourier transform. 

Much of our analysis relies on functions of positive type. We remind the reader that $\phi \in C(K)$ is said to be of \emph{positive type} if
\[ \int_K (f * f^*)(\xi)\, \phi(\xi) \, d\xi \geq 0 \qquad \text{for all }f\in L^1(K). \]
Equivalently, there is a unitary representation $\sigma$ of $K$ and a vector $f \in \H_\sigma$ such that
\[ \phi(\xi) = \langle f, \sigma(\xi) f \rangle \qquad (\xi \in K). \]
The representation and the vector are unique in the following sense: If $\sigma'$ is another representation of $K$ with a cyclic vector $f' \in \H_{\sigma'}$ such that $\phi(\xi) = \langle f', \sigma'(\xi) f' \rangle$ for all $\xi \in K$, then there is a unitary $U \colon \H_{\sigma'} \to \H_\sigma$ intertwining $\sigma'$ with $\sigma$ and mapping $f' \mapsto f$. (See, for instance, \cite[\S 3.3]{F}.) When $\sigma$ is the regular representation and $f, g \in L^2(K)$, we have
\begin{equation} \label{eq:transPos}
\langle f, L_\xi g \rangle = \int_K f(\eta) g^*(\eta^{-1} \xi)\, d\eta = (f * g^* )(\xi) \qquad (\xi \in K).
\end{equation}
For arbitrary $f \in L^2(K)$, this means that $\phi = f * f^*$ is a function of positive type. Up to unitary equivalence, the cyclic representations of $K$ are precisely the subrepresentations of the regular representation (\cite{GM}); thus \emph{every} function of positive type takes this form. In particular, 
\begin{equation} \label{eq:posSelfAdj}
\phi^* = \phi,
\end{equation}
and
\begin{equation}\label{eq:posFour}
\hat{\phi}(\pi) = (f * f^*)\caret (\pi) = \hat{f}(\pi)^* \hat{f}(\pi) \geq 0 \qquad (\pi \in \hat{K}).
\end{equation}
(It is positive semidefinite.)

The bracket $[f,f]$ in Theorem \ref{thm:brackFrm} is the Fourier transform of the associated function of positive type 
\[ V_f f(\xi) = \langle f, \rho(\xi) f \rangle \qquad (\xi \in K). \]
Given $V_f f$, it is possible to reconstruct the Hilbert space $\langle f \rangle$, the restriction of $\rho$ to $\langle f \rangle$, and the cyclic vector $f$. In other words, $V_f f$ contains complete information about the cyclic representation generated by $f$. Philosophically speaking, it must also be able to tell us when the orbit of $f$ is a continuous frame for $\langle f \rangle$. Theorem \ref{thm:brackFrm} tells how to extract this information.

\medskip

We will write $A^\dagger$ for the Moore-Penrose pseudoinverse of a bounded linear operator $A$. When $A$ has closed range, $AA^\dagger$ is orthogonal projection onto the range of $A$, and $A^\dagger A$ is orthogonal projection onto $(\ker A)^\perp$.

\begin{lemma} \label{lem:cycIsom}
For every $f \in \H_\rho$, there is a unique linear isometry $T_f \colon \langle f \rangle \to L^2(K)$ intertwining $\rho$ with left translation, and sending $f$ to a function of positive type.
Explicitly,
\begin{equation}\label{eq:cycIsom1}
(T_fg)\caret(\pi) = ( [f,f](\pi)^{1/2} )^\dagger\cdot [g,f](\pi) \qquad (g \in \langle f \rangle,\ \pi \in \hat{K}).
\end{equation}
\end{lemma}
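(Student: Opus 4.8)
The plan is to recognize the matrix coefficient $\phi := V_f f$, with $\phi(\xi) = \langle f, \rho(\xi) f\rangle$, as a function of positive type whose associated cyclic representation is exactly $(\rho|_{\langle f\rangle}, f)$. By the uniqueness clause for functions of positive type (\cite{F}), this data determines the pair $(\langle f\rangle, f)$ up to an intertwining unitary, so the whole problem is to realize this cyclic representation concretely inside $L^2(K)$ under left translation and to single out the one realization whose cyclic vector is of positive type. Since $\phi$ is of positive type, \eqref{eq:posFour} gives $\widehat\phi(\pi) = [f,f](\pi) \geq 0$, and I would build everything from the positive square roots $[f,f](\pi)^{1/2}$.

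For existence, define $h \in L^2(K)$ by $\hat h(\pi) = [f,f](\pi)^{1/2}$ for every $\pi \in \hat K$. To see this is legitimate I would use Plancherel \eqref{eq:Planch}: writing $\phi = \gamma * \gamma^*$ for some $\gamma \in L^2(K)$ (every function of positive type has this form), one gets $\sum_\pi d_\pi \Norm{[f,f](\pi)^{1/2}}_{\HS}^2 = \sum_\pi d_\pi \tr [f,f](\pi) = \Norm{\gamma}_{L^2(K)}^2 < \infty$, so $h$ is a well-defined element of $L^2(K)$ with $\hat h(\pi) \geq 0$; in particular $\hat h(\pi)$ is self-adjoint, so $h = h^*$ and $h$ is of positive type. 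Now $\langle h, L_\xi h\rangle = (h * h^*)(\xi)$ by \eqref{eq:transPos}, and its Fourier transform is $\hat h(\pi)^*\hat h(\pi) = [f,f](\pi) = \widehat\phi(\pi)$, whence $\langle h, L_\xi h\rangle = \phi(\xi) = \langle f, \rho(\xi) f\rangle$ for all $\xi$. Because the two Gram matrices then agree, the assignment $\rho(\xi) f \mapsto L_\xi h$ is well-defined and isometric on $\spn\{\rho(\xi) f : \xi \in K\}$, and it extends by continuity to an isometry $T_f\colon \langle f\rangle \to L^2(K)$ intertwining $\rho$ with left translation and satisfying $T_f f = h$.

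To obtain the explicit formula, I would first record the translation covariance of the bracket: for $g = \rho(\eta) f$ one has $V_f g = L_\eta(V_f f)$, so \eqref{eq:FourTrans} gives $[g,f](\pi) = [f,f](\pi)\,\pi(\eta^{-1})$, while $(T_f g)\caret(\pi) = (L_\eta h)\caret(\pi) = \hat h(\pi)\,\pi(\eta^{-1}) = [f,f](\pi)^{1/2}\pi(\eta^{-1})$. The two are reconciled by the pseudoinverse identity $([f,f](\pi)^{1/2})^\dagger [f,f](\pi) = [f,f](\pi)^{1/2}$, which holds because $([f,f](\pi)^{1/2})^\dagger [f,f](\pi)^{1/2}$ is the orthogonal projection onto $(\ker [f,f](\pi))^\perp$ and fixes the range of $[f,f](\pi)^{1/2}$. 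Thus $([f,f](\pi)^{1/2})^\dagger [g,f](\pi) = (T_f g)\caret(\pi)$ for every $g = \rho(\eta) f$, hence for all finite linear combinations, and since both sides depend boundedly on $g$ the identity \eqref{eq:cycIsom1} extends to all $g \in \langle f\rangle$.

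Uniqueness is the clean part: if $S\colon \langle f\rangle \to L^2(K)$ is any isometric intertwiner with $Sf =: h'$ of positive type, then $\langle h', L_\xi h'\rangle = \langle f, \rho(\xi) f\rangle = \phi(\xi)$ forces $\hat{h'}(\pi)^*\hat{h'}(\pi) = [f,f](\pi)$; since $\hat{h'}(\pi) \geq 0$, this reads $\hat{h'}(\pi)^2 = [f,f](\pi)$, and uniqueness of the positive square root gives $\hat{h'}(\pi) = [f,f](\pi)^{1/2} = \hat h(\pi)$, i.e.\ $h' = h$; as an intertwiner is determined by its value on the cyclic vector $f$, we conclude $S = T_f$. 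I expect the main obstacle to be the careful justification that $h$ lies in $L^2(K)$ (the trace summability of $[f,f](\pi)$) together with the correct reading of ``of positive type'' for the possibly discontinuous $L^2$ function $h$ — the property actually doing the work throughout is the positive-semidefiniteness $\hat h(\pi) \geq 0$, which both guarantees $h = h^*$ and singles $T_f$ out uniquely. A secondary technical point is the bookkeeping with $(\ker [f,f](\pi))^\perp$ when $[f,f](\pi)$ is not invertible, which the pseudoinverse handles uniformly.
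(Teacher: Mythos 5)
Your proof is correct, and it takes a genuinely different route from the paper's. The paper gets existence of $T_f$ in one stroke by citing Dixmier's theorem on square-integrable representations \cite[Theorem 13.8.6]{D}, then identifies $(T_f f)\caret(\pi) = [f,f](\pi)^{1/2}$ and derives \eqref{eq:cycIsom1} for \emph{general} $g \in \langle f \rangle$ from the convolution identity $[g,f](\pi) = (T_f f)\caret(\pi)(T_f g)\caret(\pi)$, using Theorem \ref{thm:ranGen} to check that $\ran (T_f g)\caret(\pi) \subset \ran (T_f f)\caret(\pi)$ before applying the pseudoinverse; uniqueness then falls out of the explicit formula. You instead build everything by hand: the target vector $h$ with $\hat h(\pi) = [f,f](\pi)^{1/2}$ is produced via Plancherel (your trace-summability check, using $\phi = \gamma * \gamma^*$, is exactly what is needed and avoids any circular appeal to Proposition \ref{prop:brackProp2}), the isometry comes from the GNS-type Gram-matrix argument, the formula is verified on the orbit and extended by density and boundedness of both sides in $g$, and uniqueness is a separate, clean argument from positivity of the square root. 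Your version is self-contained where the paper leans on Dixmier, at the cost of being longer; both hinge on the same identity $\hat h(\pi) = [f,f](\pi)^{1/2}$. One remark on the point you flag at the end: the function $h$ need not be continuous (one can arrange $\sum_\pi d_\pi \tr [f,f](\pi)^{1/2} = \infty$ even though $\sum_\pi d_\pi \tr [f,f](\pi) < \infty$), so "of positive type" must be read in the square-integrable sense of Godement--Dixmier, for which $\hat h(\pi) \ge 0$ is indeed the operative property; this mild tension with the paper's definition of positive type for $\phi \in C(K)$ is present in the paper's own proof as well, so it is not a defect of your argument relative to the original.
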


\begin{proof}
Since the restriction of $\rho$ to $\langle f \rangle$ is square integrable, the existence of a linear isometry $T_f \colon \langle f \rangle \to L^2(K)$ intertwining $\rho$ with left translation and mapping $f$ to a function of positive type is given by \cite[Theorem 13.8.6]{D}. Then $(T_f f)^* = T_f f$, and
\[ (V_f f)(\xi) = \langle T_f f, L_\xi (T_f f) \rangle = [T_f f * (T_f f)^*](\xi) = (T_f f * T_f f)(\xi) \qquad (\xi \in K). \]
Since $(T_f f)\caret(\pi) \geq 0$ for all $\pi \in \hat{K}$, we conclude that
\[ (T_f f)\caret(\pi) = [f,f](\pi)^{1/2} \qquad (\pi \in \hat{K}). \]
For any $g \in \langle f \rangle$, \eqref{eq:transPos} gives
\[ (V_f g)(\xi) = \langle T_f g, L_\xi T_f f \rangle = [(T_f g) * (T_f f)^*](\xi) = [(T_f g) * (T_f f)](\xi) \qquad (\xi \in K), \]
or equivalently,
\begin{equation} \label{eq:cycIsom2}
[g,f](\pi) = (T_f f)\caret(\pi)\cdot (T_f g)\caret(\pi) \qquad (\pi \in \hat{K}).
\end{equation}
Since $T_f g \in \langle T_f f \rangle$, Theorem \ref{thm:ranGen} shows that
\[ \ran (T_f g)\caret(\pi) \subset \ran (T_f f)\caret(\pi) = (\ker (T_f f)\caret(\pi) )^\perp \qquad (\pi \in \hat{K}). \]
(Here we use the Fourier transform in place of the Zak transform; see Remark \ref{rem:ZakFour}.) Applying $[(T_f f)\caret(\pi)]^\dagger = ([f,f](\pi)^{1/2})^\dagger$ to both sides of \eqref{eq:cycIsom2} establishes \eqref{eq:cycIsom1}. In particular, $T_f$ is uniquely determined.
\end{proof}

\smallskip

\begin{prop} \label{prop:brackProp1}
The bracket has the following properties.
\begin{enumerate}[(i)]
\item $[\cdot,\cdot]$ is linear in the first variable, and conjugate linear in the second.

\item For all $f, g \in \H_\rho$ and $\pi \in \hat{K}$,
\[ [f,g](\pi) = [g,f](\pi)^*. \]

\item For all $f \in \H_\rho$ and $\pi \in \hat{K}$, $[f,f](\pi) \geq 0$.

\item For all $f,g \in \H_\rho$ and $A \in B(\H_\rho)$, 
\[ [Af, g ] = [f,A^* g]. \]

\item For all $f,g \in \H_\rho$, $\pi \in \hat{K}$, and $\xi \in K$,
\[ [f, \rho(\xi) g](\pi) = \pi(\xi)\cdot [f,g](\pi) \]
and
\[ [\rho(\xi) f, g](\pi) = [f,g](\pi)\cdot \pi(\xi^{-1}). \]

\item For $f,g \in \H_\rho$, $f \perp \langle g \rangle$ if and only if $[f,g]=0$.

\end{enumerate}
\end{prop}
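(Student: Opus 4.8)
The plan is to reduce each property to a one-line identity for the continuous function $V_g f \in C(K)$, $(V_g f)(\xi) = \langle f,\rho(\xi)g\rangle$, and then transport it through $\F$ using the Fourier identities \eqref{eq:FourTrans}, \eqref{eq:FourConv}, and \eqref{eq:posFour} from Section \ref{sec:Zak}. The guiding dictionary is that translating the generator in the second slot produces a \emph{right} translate of $V_g f$, translating it in the first slot produces a \emph{left} translate, the $L^2(K)$-involution interchanges the two slots, and $V_f f$ is precisely a function of positive type. Since $[f,g] = (V_g f)\caret$, every assertion becomes the image under the Fourier transform of an elementary statement about functions on $K$.

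Properties (i), (ii), and (iii) are the purely algebraic ones. For (i), sesquilinearity of the inner product makes $V_g f$ linear in $f$ and conjugate-linear in $g$, and linearity of $\F$ passes this to the bracket. For (ii), I would compute $(V_g f)^*(\xi)=\overline{(V_g f)(\xi^{-1})}=\overline{\langle f,\rho(\xi^{-1})g\rangle}=\langle g,\rho(\xi)f\rangle=(V_f g)(\xi)$, so $(V_g f)^*=V_f g$; combining this with the involution half of \eqref{eq:FourConv} gives $[g,f](\pi)^*=(V_f g)\caret(\pi)^*=((V_f g)^*)\caret(\pi)=(V_g f)\caret(\pi)=[f,g](\pi)$. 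For (iii), $V_f f(\xi)=\langle f,\rho(\xi)f\rangle$ is a matrix coefficient of $\rho$ at $f$, hence of positive type, and \eqref{eq:posFour} yields $[f,f](\pi)=(V_f f)\caret(\pi)\ge 0$.

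Properties (iv), (v), and (vi) are the translation- and orthogonality-type statements. For (v) the two computations $V_{\rho(\xi)g}f=R_\xi(V_g f)$ and $V_g(\rho(\xi)f)=L_\xi(V_g f)$ follow by moving $\rho(\xi)$ across the inner product, using $\rho(\eta)\rho(\xi)=\rho(\eta\xi)$ and $\rho(\xi)^*=\rho(\xi^{-1})$; feeding these into the right- and left-translation halves of \eqref{eq:FourTrans} produces the factor $\pi(\xi)$ on the left and $\pi(\xi^{-1})$ on the right. For (iv), expanding $(V_g(Af))(\xi)=\langle Af,\rho(\xi)g\rangle=\langle f,A^*\rho(\xi)g\rangle$ and identifying it with $(V_{A^*g}f)(\xi)=\langle f,\rho(\xi)A^*g\rangle$ requires $A^*\rho(\xi)=\rho(\xi)A^*$; this is where the intertwining (commutant) nature of $A$ is used, after which $V_g(Af)=V_{A^*g}f$ and $\F$ finishes the claim. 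For (vi), $f\perp\langle g\rangle$ exactly when $\langle f,\rho(\xi)g\rangle=0$ for every $\xi$, i.e.\ when $V_g f\equiv 0$; since $\F$ is unitary by Plancherel \eqref{eq:Planch} and therefore injective, this is equivalent to $[f,g]=(V_g f)\caret=0$.

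I expect the only real hazard to be the left/right bookkeeping in (v): translating the translated argument $g$ gives a right translate and hence a left factor $\pi(\xi)$, whereas translating the analyzed vector $f$ gives a left translate and a right factor $\pi(\xi^{-1})$, and interchanging these would swap the two formulas. The secondary point to keep in mind is that the identity in (iv) is exactly the statement that the bracket is conjugate-equivariant for operators commuting with $\rho$; unlike the other parts, it genuinely uses that $A$ intertwines the representation.
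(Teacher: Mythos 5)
Your proposal is correct and follows essentially the same route as the paper: every item is reduced to an identity for the continuous function $V_g f$ and then pushed through the Fourier transform via \eqref{eq:FourTrans}, \eqref{eq:FourConv}, and \eqref{eq:posFour}; the left/right bookkeeping in (v) is exactly as the paper has it. Two points of difference are worth recording. First, for (vi) the paper proves the forward implication by writing $[f,g]=[f,P_g g]=[P_g f,g]=0$ with $P_g$ the projection onto $\langle g\rangle$ (an application of (iv)), and only uses Plancherel for the converse; your argument --- $f\perp\langle g\rangle$ iff $\langle f,\rho(\xi)g\rangle=0$ for all $\xi$ iff $V_g f\equiv 0$ iff $[f,g]=0$ by injectivity of $\F$ --- handles both directions at once and avoids (iv) entirely, which is slightly cleaner. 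Second, and more importantly, you are right that the identity $V_g(Af)=V_{A^*g}f$ underlying (iv) requires $A^*\rho(\xi)=\rho(\xi)A^*$ for all $\xi$: the proposition as stated, for arbitrary $A\in B(\H_\rho)$, is false, and the correct hypothesis is that $A$ lies in the commutant of $\rho$. The paper's one-line proof of (iv) silently assumes this, and every subsequent use of (iv) in the paper (with $P_g$ or $P_\pi$, projections onto invariant subspaces) is consistent with it, so your flag is a genuine and worthwhile correction rather than a gap in your own argument.
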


More properties will be given in Propositions \ref{prop:brackProp2} and \ref{prop:brackProp3} below.

\begin{proof}
Item (i) follows from linearity of the Fourier transform and sesquilinearity of the map $(f,g) \mapsto V_g f$. To see (ii), apply  \eqref{eq:FourConv} to the identity $V_f g = (V_g f)^*$. Equation \eqref{eq:posFour} gives (iii), since $V_f f$ is a function of positive type. Apply the simple identity $V_g (Af) = V_{A^* g}f$ to get (iv). For (v), use \eqref{eq:FourTrans} and the identities 
\[ V_{\rho(\xi) g} f = R_\xi (V_g f), \quad V_g (\rho(\xi) f) = L_\xi (V_g f) \qquad (f,g \in \H_\rho;\ \xi \in K). \]

For (vi), first assume that $f \perp \langle g \rangle$. Let $P_g$ denote orthogonal projection of $\H_\rho$ onto $\langle g \rangle$, and apply (iv) to see that
\[ [f, g] = [f, P_g g] = [P_g f, g] = 0. \]
Now suppose that $f,g \in \H_\rho$ satisfy $[f,g]=0$. By Plancherel's Theorem, $V_g f =0$. That is, $\langle f, \rho(\xi) g \rangle = 0$ for all $\xi \in K$. Hence $f \perp \langle g \rangle$.
\end{proof}

When $K$ is contained in a larger second countable, locally compact group $G$, the Zak transform provides a bracket for the action of $K$ on $L^2(G)$ by left translation. Indeed, Lemma \ref{lem:ZakBrack} says precisely that
\[ [f,g](\pi) = (Zg)(\pi)^* (Zf)(\pi) \qquad (f,g \in L^2(G);\ \pi \in \hat{K}) \]
in this case. The theorem below shows that this example is universal; it is always possible to embed $\H_\rho$ as a $K$-invariant subspace of $L^2(G)$, for some larger group $G$ containing $K$, in such a way that $\rho$ becomes left translation by $K$.

\begin{theorem} \label{thm:isom}
There is a second countable, locally compact group $G$ containing $K$ as a closed subgroup, and a linear isometry $T \colon \H_\rho \to L^2(G)$ satisfying
\[ T \rho(\xi) f = L_\xi T f \qquad (f \in \H_\rho,\ \xi \in K). \]
If $Z$ is the Zak transform for the pair $(G,K)$, then the bracket for $\rho$ is given by
\[ [f,g](\pi) = (ZTg)(\pi)^*(ZTf)(\pi) \qquad (f,g \in \H_\rho;\ \pi \in \hat{K}). \]
\end{theorem}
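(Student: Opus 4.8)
The plan is to reduce everything to the cyclic case, where Lemma \ref{lem:cycIsom} already does the essential work, and then assemble the cyclic pieces into a single group. First I would decompose $\H_\rho$ as an orthogonal direct sum of cyclic subspaces. Since $K$ is compact and $\H_\rho$ is separable, a standard argument (exhausting $\H_\rho$ by cyclic subspaces generated by vectors from a countable dense set and passing to orthogonal complements) yields an at most countable family $\{f_n\}_n$ with $\H_\rho = \bigoplus_n \langle f_n \rangle$. By Lemma \ref{lem:cycIsom}, each summand carries a linear isometry $T_{f_n} \colon \langle f_n \rangle \to L^2(K)$ intertwining $\rho$ with left translation. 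Because the summands are mutually orthogonal and each $T_{f_n}$ is isometric, the assembled map $T_0 \colon \H_\rho \to \bigoplus_n L^2(K)$, $T_0\big(\sum_n g_n\big) = (T_{f_n} g_n)_n$, is again a linear isometry, and it intertwines $\rho$ with the diagonal left-translation action $\bigoplus_n L_\xi$.

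Next I would realize the target $\bigoplus_n L^2(K)$ as $L^2(G)$ for a suitable $G \supset K$. The key observation is that if $\Gamma$ is a countable discrete group, then $G := K \times \Gamma$ is second countable and locally compact, $K \cong K \times \{e\}$ sits inside $G$ as a closed subgroup, and there is a canonical unitary $L^2(G) \cong L^2(K) \otimes \ell^2(\Gamma) \cong \bigoplus_{\gamma \in \Gamma} L^2(K)$. Choosing $\Gamma = \Z$ (which has enough elements to index the cyclic summands whether they are finitely or countably many), I would compose $T_0$ with this identification, sending the $n$-th cyclic piece into the $n$-th copy of $L^2(K)$, to obtain the desired isometry $T \colon \H_\rho \to L^2(G)$. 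The restriction of the left-regular representation of $G$ to the subgroup $K$ acts by $(L_{(\xi,e)} F)(\eta,\gamma) = F(\xi^{-1}\eta, \gamma)$, i.e.\ as $L_\xi$ on each copy of $L^2(K)$; hence the intertwining property of $T_0$ upgrades to $T\rho(\xi) = L_\xi T$ for all $\xi \in K$, as required.

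Finally, the bracket formula follows formally from Lemma \ref{lem:ZakBrack} together with the intertwining property. Since $T$ is isometric and $T\rho(\xi) = L_\xi T$, the matrix elements are preserved:
\[ (V_g f)(\xi) = \langle f, \rho(\xi) g \rangle = \langle Tf, L_\xi Tg \rangle \qquad (\xi \in K). \]
Taking Fourier transforms and applying Lemma \ref{lem:ZakBrack} to the pair $Tf, Tg \in L^2(G)$ gives
\[ [f,g](\pi) = (V_g f)\caret(\pi) = (ZTg)(\pi)^* (ZTf)(\pi) \qquad (\pi \in \hat{K}), \]
which is exactly the claimed expression.

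I expect the main obstacle to be the bookkeeping around the cyclic decomposition and the choice of $\Gamma$: one must ensure $G$ is genuinely second countable and locally compact with $K$ closed, and that $T$ need only be an isometry (not a unitary), so that padding with extra copies of $L^2(K)$ when there are finitely many cyclic summands causes no harm. Once $T$ is in hand, the bracket identity is immediate from Lemma \ref{lem:ZakBrack}, so essentially all the difficulty is concentrated in producing the embedding $T$.
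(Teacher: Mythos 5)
Your proposal is correct and follows essentially the same route as the paper: decompose $\H_\rho$ into cyclic pieces, apply Lemma \ref{lem:cycIsom} to each, and take $G = K \times \Gamma$ for a countable discrete group $\Gamma$ indexing the pieces (the paper makes the index set $I$ itself into a discrete abelian group rather than padding into $\Z$, a cosmetic difference). Your explicit derivation of the bracket formula from Lemma \ref{lem:ZakBrack} and the intertwining property is exactly the intended (and in the paper, unstated) final step.
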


\begin{proof}
There is a countable family $\{f_i\}_{i \in I} \subset \H_\rho$ for which
\[ \H_\rho = \bigoplus_{i \in I} \langle f_i \rangle. \]
For each $i \in I$, let $T_{f_i} \colon \langle f_i \rangle \to L^2(K)$ be the isometry from Lemma \ref{lem:cycIsom}. Give $I$ the structure of a discrete abelian group, and let $G = K \times I$. Given $g \in \H_\rho$, find the unique decomposition $g = \sum_{i \in I} g_i$ with $g_i \in \langle f_i \rangle$ for all $i$, and define
\[ (Tg)(\xi, i) = (T_{f_i} g_i)(\xi) \qquad (\xi \in K,\ i \in I). \]
Then $T\colon \H_\rho \to L^2(G)$ is the desired isometry.
\end{proof}

\begin{prop} \label{prop:brackProp2}
In addition to the properties listed in Proposition \ref{prop:brackProp1}, the bracket satisfies the following.
\begin{enumerate}[(i)]
\item For all $f,g \in \H_\rho$,
\begin{equation} \label{eq:innBrack}
\langle f, g \rangle = \sum_{\pi \in \hat{K}} d_\pi \tr( [f,g](\pi) ).
\end{equation}

\item For all $f, g \in \H_\rho$,
\begin{equation} \label{eq:brackProp2.1}
\Norm{ [f,g](\pi) }_{\HS}^2 \leq \Norm{ [f,f](\pi) }_{\HS} \Norm{ [g,g](\pi) }_{\HS} \qquad (\pi \in \hat{K}).
\end{equation}

\item If $f_n \to f$ in $\H_\rho$, then $[f_n,g] \to [f,g]$ for all $g \in \H_\rho$. In particular, 
\[ [f_n,g](\pi) \to [f,g](\pi) \]
for all $g \in \H_\rho$ and $\pi \in \hat{K}$.

\end{enumerate}
\end{prop}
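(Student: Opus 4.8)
The plan is to lean on the universal embedding of Theorem~\ref{thm:isom}, which does most of the work for (i) and (ii) simultaneously. Fix the isometry $T \colon \H_\rho \to L^2(G)$ and Zak transform $Z$ from that theorem, and for each $\pi \in \hat{K}$ write $A_f = (ZTf)(\pi)$ and $A_g = (ZTg)(\pi)$, viewed as operators in $B(\H_\pi, L^2(K\backslash G; \H_\pi))$. The key structural fact is the factorization
\[ [f,g](\pi) = A_g^* A_f, \qquad [f,f](\pi) = A_f^* A_f, \qquad [g,g](\pi) = A_g^* A_g, \]
valid for all pairs at once. Once this is in hand, both (i) and (ii) become short computations.

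For (i), I would simply use that $T$ is isometric and $Z$ is unitary, so that
\[ \langle f, g \rangle = \langle ZTf, ZTg \rangle = \sum_{\pi \in \hat{K}} d_\pi \tr\bigl( A_g^* A_f \bigr) = \sum_{\pi \in \hat{K}} d_\pi \tr\bigl( [f,g](\pi) \bigr), \]
where the second equality is the definition of the inner product on $\bigoplus_\pi B(\H_\pi, L^2(K\backslash G;\H_\pi))$. (As a self-contained alternative, one can note that $\langle f,g\rangle = (V_g f)(e)$ and apply Fourier inversion at the identity: this holds directly for a function of positive type $\phi = h * h^*$ since $\phi(e) = \Norm{h}^2 = \sum_\pi d_\pi \tr \hat\phi(\pi)$ by Plancherel, and then extends to all of $V_g f$ by polarization, using that both sides are sesquilinear and agree on the diagonal $f=g$ where $V_f f$ is of positive type.)

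For (ii), I would compute $\Norm{[f,g](\pi)}_{\HS}^2 = \tr(A_f^* A_g A_g^* A_f) = \tr\bigl( (A_f A_f^*)(A_g A_g^*) \bigr) = \langle A_f A_f^*, A_g A_g^*\rangle_{\HS}$ by cyclicity of the trace, and bound this by Cauchy--Schwarz for the Hilbert--Schmidt inner product. It then only remains to observe, again by cyclicity, that $\Norm{A_f^* A_f}_{\HS} = \Norm{A_f A_f^*}_{\HS}$ and likewise for $g$, so the right-hand side is exactly $\Norm{[f,f](\pi)}_{\HS}\Norm{[g,g](\pi)}_{\HS}$.

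For (iii), I would first reduce to showing $[h_n, g] \to 0$ whenever $h_n := f_n - f \to 0$, using linearity in the first slot (Proposition~\ref{prop:brackProp1}(i)). The cleanest route is to derive the Lipschitz estimate $\Norm{[h,g]} \le \Norm{h}\,\Norm{g}$. Starting from (ii) and the elementary fact that $\Norm{M}_{\HS} \le \tr(M)$ for positive semidefinite $M$ (since $\sum_i \lambda_i^2 \le (\sum_i \lambda_i)^2$), one gets
\[ \Norm{[h,g]}^2 = \sum_{\pi} d_\pi \Norm{[h,g](\pi)}_{\HS}^2 \le \sum_{\pi} d_\pi \tr\bigl([h,h](\pi)\bigr)\,\tr\bigl([g,g](\pi)\bigr) \le \Norm{g}^2 \sum_{\pi} d_\pi \tr\bigl([h,h](\pi)\bigr) = \Norm{h}^2 \Norm{g}^2, \]
where the last inequality uses the uniform bound $\tr([g,g](\pi)) \le \Norm{g}^2$ (because $d_\pi \tr([g,g](\pi)) \le \sum_\sigma d_\sigma \tr([g,g](\sigma)) = \Norm{g}^2$ by (i), and $d_\pi \ge 1$), and the final equality is (i) applied to $h$. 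Convergence in each coordinate is then immediate from $d_\pi \Norm{[h_n,g](\pi) - [f,g](\pi)}_{\HS}^2 \le \Norm{[h_n - f, g]}^2$. I expect the main obstacle to lie precisely in this last chain: the two inequalities $\Norm{M}_{\HS} \le \tr(M)$ for $M \ge 0$ and the uniform boundedness $\tr([g,g](\pi)) \le \Norm{g}^2$ are what convert the bilinear Cauchy--Schwarz bound of (ii) into a genuine continuity estimate, and getting the bookkeeping of the nonnegative sums right is the only delicate point. Everything upstream is a direct consequence of the embedding and the unitarity of $Z$.
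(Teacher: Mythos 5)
Your proof is correct, and for parts (i) and (ii) it is essentially identical to the paper's: both reduce to the embedding of Theorem \ref{thm:isom}, compute $\langle f,g\rangle = \langle ZTf, ZTg\rangle$ coordinatewise for (i), and use cyclicity of the trace plus Cauchy--Schwarz for the Hilbert--Schmidt inner product for (ii). The only genuine divergence is in (iii): the paper disposes of it in one line (``follows immediately from continuity of the Zak transform,'' i.e.\ $f \mapsto (Zg)(\pi)^*(Zf)(\pi)$ is continuous because $Z$ is unitary and left-multiplication by the fixed bounded operator $(Zg)(\pi)^*$ is bounded), whereas you derive the quantitative Lipschitz estimate $\Norm{[h,g]} \le \Norm{h}\,\Norm{g}$ intrinsically from (i) and (ii), via $\Norm{M}_{\HS} \le \tr(M)$ for $M \ge 0$ and the uniform bound $\tr([g,g](\pi)) \le \Norm{g}^2$. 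Your chain of inequalities checks out (note the harmless typo where $[h_n,g]$ should read $[f_n,g]$ in the last display). What your route buys is that (iii) becomes independent of the embedding altogether and comes with an explicit operator-norm bound on $h \mapsto [h,g]$; what the paper's route buys is brevity, since the embedding is already in place for (i) and (ii). Either is acceptable.
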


\begin{proof}
By applying Theorem \ref{thm:isom} if necessary, we may assume that $\H_\rho$ is a $K$-invariant subspace of $L^2(G)$ for some second countable locally compact group $G$ containing $K$ as a closed subgroup, that $\rho$ is given by left translation of $K$, and that
\[ [f,g](\pi) = (Zg)(\pi)^* (Zf)(\pi) \qquad (f,g \in \H_\rho;\ \pi \in \hat{K}), \]
where $Z$ is the Zak transform for the pair $(G,K)$. Now (iii) follows immediately from continuity of the Zak transform.

To prove (i), we simply compute
\[ \langle f, g \rangle = \langle Zf, Zg \rangle = \sum_{\pi \in \hat{K}} d_\pi \langle (Zf)(\pi), (Zg)(\pi) \rangle_{\HS} = \sum_{\pi \in \hat{K}} d_\pi \tr([f,g](\pi)) \qquad (f,g \in \H_\rho). \]

For (ii), we use Cauchy-Schwarz for the Hilbert-Schmidt inner product to estimate
\[ \Norm{ [f,g](\pi) }_{\HS}^2 = \tr( (Zf)(\pi)^* (Zg)(\pi) (Zg)(\pi)^* (Zf)(\pi) ) = \tr( (Zf)(\pi) (Zf)(\pi)^* (Zg)(\pi) (Zg)(\pi)^* ) \]
\[ = | \langle (Zg)(\pi) (Zg)(\pi)^*, (Zf)(\pi) (Zf)(\pi)^* \rangle_{\HS} | \leq \Norm{ (Zg)(\pi) (Zg)(\pi)^* }_{\HS} \Norm{ (Zf)(\pi) (Zf)(\pi)^* }_{\HS} \]
\[ = \Norm{ [g,g](\pi) }_{\HS} \Norm{ [f,f](\pi) }_{\HS}. \qedhere \]
\end{proof}

Equation \eqref{eq:innBrack} implies that vectors in $\H_\rho$ are uniquely determined by their bracket values. Specifically, if $f,g \in \H_\rho$ have $[f,h] = [g,h]$ for all $h \in \H_\rho$, then \eqref{eq:innBrack} shows that $\langle f,h \rangle = \langle g, h \rangle$, so that $f=g$. 
Propositions \ref{prop:brackProp1} and \ref{prop:brackProp2} together give the general feeling that the bracket behaves like a kind of operator-valued inner product on $\H_\rho$.\footnote{For representations of discrete groups, this idea was made more precise using the language of Hilbert modules and a slightly different notion of bracket in \cite{BHP3}.} However, the bracket can tell us about much more than the linear and geometric properties of $\H_\rho$. It can tell us about $\rho$ itself.

For each $\pi \in \hat{K}$, we will denote $\mathcal{M}_\pi$ for the isotypical component of $\pi$ in $\rho$. In other words, $\mathcal{M}_\pi$ is the closed linear span of all invariant subspaces of $\H_\rho$ on which $\rho$ is equivalent to $\pi$. We will write $P_\pi$ for the orthogonal projection of $\H_\rho$ onto $\mathcal{M}_\pi$. Finally, when $V \subset \H_\rho$ is an invariant subspace, we denote $\rho^V$ for the subrepresentation of $\rho$ on $V$. Then we have the following proposition.

\begin{prop} \label{prop:brackProp3} The bracket carries the following information about the isotypical components of $\rho$.
\begin{enumerate}[(i)]
\item For all $\pi \in \hat{K}$,
\[ \mathcal{M}_\pi = \{ f \in \H_\rho : [f,g](\sigma) = 0 \text{ for all }g\in \H_\rho \text{ and }\sigma \neq \overline{\pi}\} \]
\[ = \{ f \in \H_\rho : [f,f](\sigma) = 0 \text{ for }\sigma \neq \overline{\pi} \}. \]

\item For all $f, g \in \H_\rho$,
\[ [f,g](\overline{\pi}) = [ P_\pi f, g ](\overline{\pi}) \qquad (\pi \in \hat{K}). \]

\item For all $f \in \H_\rho$
\[ \rank [f,f](\pi)  = \mult(\overline{\pi}, \rho^{\langle f \rangle }) \qquad (\pi \in \hat{K}). \]
In particular,
\[ \dim \langle f \rangle = \sum_{\pi \in \hat{K}} d_\pi \cdot \rank [f,f](\pi). \]
\end{enumerate}
\end{prop}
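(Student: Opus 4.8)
The plan is to reduce everything to the translation setting and then invoke the range-function machinery of Section~\ref{sec:ranTran}. By Theorem~\ref{thm:isom}, I may assume $\H_\rho$ is a $K$-invariant subspace $V_J \subset L^2(G)$ on which $\rho$ acts by left translation, with $[f,g](\pi) = (Zg)(\pi)^*(Zf)(\pi)$. The key structural input is Proposition~\ref{prop:isoComp}, which identifies the isotypical component of $\pi$ in $\rho^{V_J}$ as $\{h : (Zh)(\sigma)=0 \text{ for } \sigma\neq\overline{\pi}\}$ and records $\mult(\pi,\rho_J)=\dim J(\overline{\pi})$. Note the contragredient: it appears because left translation realizes $\overline{\pi}$ rather than $\pi$ (cf.\ the proof of Theorem~\ref{thm:ranTran}), and keeping track of these bars is the main bookkeeping hazard throughout.

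For (i), the crucial observation is that, since $[f,f](\sigma)=(Zf)(\sigma)^*(Zf)(\sigma)$ is positive, $[f,f](\sigma)=0$ holds if and only if $(Zf)(\sigma)=0$. Hence the set $\{f : [f,f](\sigma)=0 \text{ for } \sigma\neq\overline{\pi}\}$ is exactly $\{f : (Zf)(\sigma)=0 \text{ for } \sigma\neq\overline{\pi}\}$, which is $\mathcal{M}_\pi$ by Proposition~\ref{prop:isoComp}. To close the loop with the middle description I would sandwich: if $(Zf)(\sigma)=0$ for $\sigma\neq\overline{\pi}$ then $[f,g](\sigma)=(Zg)(\sigma)^*(Zf)(\sigma)=0$ for all $g$, so $\mathcal{M}_\pi$ lies in the middle set; and the middle set lies in the last set by taking $g=f$. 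All three therefore coincide. For (ii), write $f = P_\pi f + f'$ with $f'\perp\mathcal{M}_\pi$, so by linearity (Proposition~\ref{prop:brackProp1}(i)) it suffices to show $[f',g](\overline{\pi})=0$. Decomposing $f' = \sum_{\sigma\neq\pi} P_\sigma f$ into isotypical pieces, part (i) gives $[P_\sigma f, g](\overline{\pi})=0$ for each $\sigma\neq\pi$ (since $\overline{\sigma}\neq\overline{\pi}$), and the infinite sum is handled by continuity of the bracket, Proposition~\ref{prop:brackProp2}(iii).

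For (iii), since $\rank(A^*A)=\rank A$, I have $\rank[f,f](\pi)=\rank(Zf)(\pi)$. Theorem~\ref{thm:ranGen} gives $\langle f\rangle = V_J$ with $J(\pi)=\ran(Zf)(\pi)$, which is closed because $\H_\pi$ is finite-dimensional, so $\dim J(\pi)=\rank[f,f](\pi)$. Then \eqref{eq:multRan}, read with $\overline{\pi}$ in place of $\pi$ and using $\overline{\overline{\pi}}=\pi$, yields $\mult(\overline{\pi},\rho^{\langle f\rangle})=\dim J(\pi)=\rank[f,f](\pi)$; the dimension formula follows from \eqref{eq:dimRan} after reindexing $\pi\mapsto\overline{\pi}$ and using $d_{\overline{\pi}}=d_\pi$. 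The only genuine obstacle is the contragredient bookkeeping—ensuring every bar lands in the right place—together with the limit argument in (ii); both become routine once the reduction to the Zak picture is in place.
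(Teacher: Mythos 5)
Your proposal is correct and follows essentially the same route as the paper: reduce to the translation picture via Theorem \ref{thm:isom}, use $[f,g](\pi)=(Zg)(\pi)^*(Zf)(\pi)$ together with Proposition \ref{prop:isoComp} for (i) and (ii), and combine Theorem \ref{thm:ranGen}, \eqref{eq:multRan}, and $\rank(A^*A)=\rank A$ for (iii). The only cosmetic differences are that for (i) you use the factorization of the bracket directly where the paper invokes the Cauchy--Schwarz inequality \eqref{eq:brackProp2.1}, and for (ii) you spell out the isotypical sum and continuity argument that the paper compresses into the single identity $(ZP_\pi f)(\overline{\pi})=(Zf)(\overline{\pi})$.
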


\begin{proof}
As in the proof of the last proposition, we may assume that $K$ is a closed subgroup of a second countable locally compact group $G$, that $\H_\rho$ is a $K$-invariant subspace of $L^2(G)$, and that $\rho$ is left translation by $K$. If $Z$ is the Zak transform for the pair $(G,K)$, then the bracket is given by
\[ [f,g](\pi) = (Zg)(\pi)^* (Zf)(\pi) \qquad (f,g \in \H_\rho;\ \pi \in \hat{K}). \]
For any $f \in \H_\rho$ and $\pi \in \hat{K}$, this implies in particular that $(Zf)(\pi) = 0$ if and only if $[f,f](\pi) = 0$. Moreover, the Cauchy-Schwarz type inequality \eqref{eq:brackProp2.1} shows that $[f,f](\pi)=0$ if and only if $[f,g](\pi) = 0$ for all $g \in \H_\rho$. Now (i) follows from Proposition \ref{prop:isoComp}.

For (ii), apply Proposition \ref{prop:isoComp} to see that $(ZP_\pi f)(\overline{\pi}) = (Zf)(\overline{\pi})$. 

Finally, (iii) follows from \eqref{eq:multRan}, Theorem \ref{thm:ranGen}, and the fact that
\[ \rank [f,f](\pi) = \rank((Zf)(\pi)^* (Zf)(\pi)) = \rank( (Zf)(\pi) ) \qquad (\pi \in \hat{K}). \qedhere \]
\end{proof}

In many cases, statement (iii) above can be used to test whether a particular vector in $\H_\rho$ is cyclic for $\rho$.

\begin{prop}\label{prop:cycBrack}
Suppose that $\mult(\pi,\rho) < \infty$ for each $\pi \in \hat{K}$. Then $f\in \H_\rho$ is a cyclic vector for $\rho$ if and only if
\[ \rank [f,f](\pi) = \mult(\overline{\pi},\rho) \quad \text{for every }\pi \in \hat{K}. \]
Moreover, when $\dim \H_\rho < \infty$, $f$ is a cyclic vector if and only if
\[ \sum_{\pi \in \hat{K}} d_\pi \cdot \rank [f,f](\pi) = \dim \H_\rho. \]
\end{prop}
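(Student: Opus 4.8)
The plan is to deduce both equivalences directly from Proposition \ref{prop:brackProp3}(iii), which already does the heavy lifting: it identifies $\rank[f,f](\pi)$ with $\mult(\overline{\pi}, \rho^{\langle f \rangle})$ and identifies $\sum_\pi d_\pi \rank[f,f](\pi)$ with $\dim \langle f \rangle$. Recall that $f$ is cyclic precisely when $\langle f \rangle = \H_\rho$, so the real task is to detect this equality of subspaces from bracket data.

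For the first equivalence, the forward implication is immediate: if $\langle f \rangle = \H_\rho$ then $\rho^{\langle f \rangle} = \rho$, so Proposition \ref{prop:brackProp3}(iii) gives $\rank[f,f](\pi) = \mult(\overline{\pi}, \rho)$ for every $\pi$. For the converse, I would first use that $\pi \mapsto \overline{\pi}$ is an involution of $\hat{K}$ to rewrite the hypothesis as $\mult(\sigma, \rho^{\langle f \rangle}) = \mult(\sigma, \rho)$ for every $\sigma \in \hat{K}$. The key step is then to promote this numerical equality to an actual containment of subspaces. By Lemma \ref{lem:isoCom}(ii), the isotypical component of $\sigma$ in $\rho^{\langle f \rangle}$ is $\langle f \rangle \cap \mathcal{M}_\sigma$, so applying \eqref{eq:multDimIso} on both sides,
\[ \dim(\langle f \rangle \cap \mathcal{M}_\sigma) = d_\sigma \cdot \mult(\sigma, \rho^{\langle f \rangle}) = d_\sigma \cdot \mult(\sigma, \rho) = \dim \mathcal{M}_\sigma. \]
Here the hypothesis $\mult(\sigma,\rho) < \infty$, together with $d_\sigma < \infty$, guarantees $\dim \mathcal{M}_\sigma < \infty$, so the nested inclusion $\langle f \rangle \cap \mathcal{M}_\sigma \subset \mathcal{M}_\sigma$ of equal finite dimension forces $\mathcal{M}_\sigma \subset \langle f \rangle$. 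Since $\H_\rho = \bigoplus_{\sigma \in \hat{K}} \mathcal{M}_\sigma$ by \eqref{eq:dirSumIso}, summing over $\sigma$ yields $\H_\rho \subset \langle f \rangle$, hence $\langle f \rangle = \H_\rho$ and $f$ is cyclic.

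For the second statement, I would note first that $\dim \H_\rho < \infty$ automatically forces $\mult(\pi, \rho) < \infty$ for every $\pi$, so the first part applies. Since $\langle f \rangle \subset \H_\rho$ with both spaces finite-dimensional, cyclicity $\langle f \rangle = \H_\rho$ is equivalent to the dimension equality $\dim \langle f \rangle = \dim \H_\rho$. Substituting the formula $\dim \langle f \rangle = \sum_{\pi \in \hat{K}} d_\pi \cdot \rank[f,f](\pi)$ from Proposition \ref{prop:brackProp3}(iii) yields exactly the stated criterion.

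I expect the only genuine obstacle to be the converse of the first equivalence: equality of all multiplicities only tells us that $\rho^{\langle f \rangle}$ is unitarily equivalent to $\rho$, which in general does \emph{not} imply $\langle f \rangle = \H_\rho$, since a proper invariant subspace can carry an equivalent subrepresentation when multiplicities are infinite. The intersection-with-isotypical-components argument above is what converts abstract equivalence into genuine subspace equality, and it is precisely here that the finiteness hypothesis $\mult(\pi,\rho) < \infty$ is indispensable.
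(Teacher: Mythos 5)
Your proof is correct and takes the route the paper intends: the proposition is stated there without proof, as an immediate consequence of Proposition \ref{prop:brackProp3}(iii), and your argument supplies exactly the missing details --- in particular the use of Lemma \ref{lem:isoCom}(ii) and \eqref{eq:multDimIso} to upgrade equality of finite multiplicities to the genuine subspace equality $\langle f \rangle = \H_\rho$. Your closing remark correctly identifies where the hypothesis $\mult(\pi,\rho)<\infty$ enters, and the reduction of the second equivalence to the dimension formula is exactly right.
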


We can now prove our main result.

\begin{proof}[Proof of Theorem \ref{thm:brackFrm}]
By Lemma \ref{lem:cycIsom}, we may assume that $f$ is a function of positive type in $L^2(K)$, and that $\rho$ is given by left translation. We are going to apply Theorem \ref{thm:tranFrm} with $G=K$ and $\A = \{ f \}$. As explained in Remark \ref{rem:ZakFour}, the Zak transform reduces to the Fourier transform in this case. In particular, Theorem \ref{thm:ranGen} gives $\langle f \rangle = S(\A) = V_J$, where
\[ J(\pi) = \ran \hat{f}(\pi) \qquad (\pi \in \hat{K}). \]

It remains to show that our condition (ii) is equivalent to condition (ii) in Theorem \ref{thm:tranFrm}. 
For fixed $\pi \in \hat{K}$, we have $\hat{f}(\pi) \geq 0$, since $f$ is a function of positive type. Choose an orthonormal basis $e_1^\pi,\dotsc,e_{d_\pi}^\pi$ for $\H_\pi$ consisting of eigenvectors for $\hat{f}(\pi)$, with corresponding eigenvalues $\lambda_1^\pi \geq \dotsc \geq \lambda_{d_\pi}^\pi \geq 0$. If $r_\pi = \rank \hat{f}(\pi)$, then the nonzero eigenvalues of $[f,f](\pi) = \hat{f}(\pi)^2$ are precisely $(\lambda_1^\pi)^2,\dotsc, (\lambda_{r_\pi}^\pi)^2$. Now $\{\hat{f}(\pi) e_i^\pi\}_{i=1}^{d_\pi} = \{ \lambda_i^\pi e_i^\pi\}_{i=1}^{d_\pi}$ is a discrete frame for $J(\pi) = \spn\{e_1^\pi,\dotsc,e_{r_\pi}^\pi\}$ with bounds $A,B$ if and only if $A \leq (\lambda_1^\pi)^2,\dotsc, (\lambda_{r_\pi}^\pi)^2 \leq B$.
\end{proof}

\begin{example} \label{ex:irredFrm}
When $\rho$ is irreducible, it is well known that any nonzero $f \in \H_\rho$ generates a continuous tight frame with bound $\Norm{f}^2/ (\dim \H_\rho)$. We can recover this fact as follows. First, Proposition \ref{prop:brackProp3}(iii) shows that
\[ \rank [f,f](\pi) = \begin{cases}
1, & \text{if }\pi = \overline{\rho} \\
0, & \text{if }\pi \neq \overline{\rho}
\end{cases} \qquad (\pi \in \hat{K}). \]
In particular, the operators $[f,f](\pi)$, $\pi \in \hat{K}$, have only one nonzero eigenvalue between them. Call that eigenvalue $\lambda$. By Theorem \ref{thm:brackFrm},  $\{\rho(\xi) f\}_{\xi \in K}$ is a continuous tight frame with bound $\lambda$. Now use Proposition \ref{prop:brackProp2}(i) to compute $\Norm{f}^2 = \lambda\cdot (\dim \H_\rho)$.
\end{example}

\begin{example} \label{ex:dihedFrm}

Let $D_3 = \langle a, b : a^3 = b^2 = 1, bab^{-1} = a^{-1} \rangle$ be the dihedral group of order six. It has three irreducible representations: the trivial representation $\pi_1$, the one-dimensional representation $\pi_2$ given by $\pi_2(a) = 1$ and $\pi_2(b) = -1$, and the two-dimensional representation $\pi_3$ given by
\[ \pi_3(a) = \begin{pmatrix}
\omega & 0 \\
0 & \omega^{-1}
\end{pmatrix}
\quad \text{and} \quad
\pi_3(b) = \begin{pmatrix}
0 & 1 \\
1 & 0
\end{pmatrix}. \]
Consider the four-dimensional representation $\rho$ given by
\[ \rho(a) = \frac{1}{4} \begin{pmatrix}
1 & i\sqrt{3} & -3 & i\sqrt{3} \\
i\sqrt{3} & 1 & i\sqrt{3} & -3 \\
-3 &  i\sqrt{3} & 1 &  i\sqrt{3} \\
 i\sqrt{3} & -3 &  i\sqrt{3} & 1
\end{pmatrix}
\quad \text{and} \quad
\rho(b) = \frac{1}{2} \begin{pmatrix}
1 & 1 & 1 & -1 \\
1 & -1 & -1 & -1 \\
1 & -1 & 1 & 1 \\
-1 & -1 & 1 & -1
\end{pmatrix}. \]
Let $f = (3,1,-1,1)$. One can compute $[f,f](\pi_1) = 4$, $[f,f](\pi_2) = 4$, and
\[ [f,f](\pi_3) = \begin{pmatrix}
0 & 0 \\
0 & 2
\end{pmatrix}. \]
By the dimension count in Proposition \ref{prop:cycBrack}, $\langle f \rangle = \H_\rho = \C^4$. Applying Theorem \ref{thm:brackFrm}, we see that the orbit of $f$ forms a continuous frame for $\C^4$ with optimal bounds $2$ and $4$. When viewed as a discrete frame, the optimal bounds are $12$ and $24$. (See Remark \ref{rem:frmBnds}.)

As this example demonstrates, bracket analysis can result in significant dimension reduction for the study of group frames. Suppose, for instance, that we want to know the optimal frame bounds for $\{\rho(\xi) f\}_{\xi \in K}$. A naive approach to this problem would be to compute the Gramian operator for the sequence $\{\rho(x) f \}_{x \in K}$ and find the range of its nonzero eigenvalues. In this example, that would mean computing the eigenvalues of a $6\times 6$ matrix, which could be intractably difficult. Using bracket analysis, on the other hand, the largest matrix we had to analyze was $2 \times 2$.
\end{example}

\medskip

%APPLICATIONS OF BRACKET ANALYSIS============================================================
\section{Applications of bracket analysis} \label{sec:brackApp}

We now explore several applications of the bracket analysis developed in Section \ref{sec:brack}.

%block diagonalization of the Gramian
\subsection{Block diagonalization of the Gramian}

As we have just seen, the orbit $\{\rho(\xi) f\}_{\xi \in K}$ of a vector $f \in \H_\rho$ forms a frame only under special circumstances. However, compactness of $K$ implies that it is always a \emph{Bessel} mapping. Indeed, the Cauchy-Schwarz inequality produces
\[ \int_K | \langle g, \rho(\xi) f \rangle |^2 \, d\xi \leq \int_K \Norm{g}^2 \cdot \Norm{\rho(\xi) f}^2\, d\xi = \Norm{f}^2\cdot \Norm{g}^2 \qquad (g \in \H). \]
In particular, the Gramian $\mathcal{G} \colon L^2(K) \to L^2(K)$ and the frame operator $S \colon \H_\rho \to \H_\rho$ are well-defined for any choice of $f \in \H_\rho$, whether or not $\{\rho(\xi) f\}_{\xi \in K}$ is a frame.

A direct computation shows the Gramian is given by
\begin{equation} \label{eq:framOpGrm1}
\mathcal{G}(\phi) = \phi * V_f f \qquad (\phi \in L^2(K)),
\end{equation}
and the frame operator satisfies
\[ V_h (Sg) = V_f g * V_h f \qquad (g,h \in \langle f \rangle). \]
Thus, $S$ is defined uniquely by the relation
\begin{equation} \label{eq:framOpGrm2}
[Sg,h](\pi) = [f,h](\pi)\cdot [g,f](\pi) \qquad (g,h \in \langle f \rangle;\ \pi \in \hat{K}).
\end{equation}
The Gramian and the frame operator are intimately connected through the linear isometry $T_f \colon \langle f \rangle \to L^2(K)$ from Lemma \ref{lem:cycIsom}. Indeed, given any $g \in \langle f \rangle$, we compute
\[ (T_f Sg)\caret(\pi) = ( [f,f](\pi)^{1/2})^\dagger\cdot [Sg,f](\pi) = ( [f,f](\pi)^{1/2})^\dagger\cdot [f,f](\pi)\cdot [g,f](\pi) \]
\[ = [f,f](\pi)\cdot ( [f,f](\pi)^{1/2} )^\dagger \cdot [g,f](\pi) = (\mathcal{G}T_f g)\caret(\pi) \qquad (\pi \in \hat{K}). \]
Therefore,
\begin{equation} \label{eq:framOpGrm3}
T_f S = \mathcal{G} T_f.
\end{equation}

In fact, when $\{\rho(\xi) f\}_{\xi \in K}$ is a frame for $\langle f \rangle$, $T_f$ is the analysis operator for the canonical tight frame. To see this, first observe that the range of $T_f$ is $\langle T_f f \rangle$, the left ideal generated by $T_f f$. Let $R$ be the operator on $\ran T_f$ given by
\[ R(\phi) = \phi * (T_f f) \qquad (\phi \in \ran T_f ). \]
For any $g \in \langle f \rangle$, we have
\[ \langle g, \rho(\xi) f \rangle = \langle T_f g, L_\xi(T_f f) \rangle = [(T_f g)*(T_f f)^*](\xi) = [(T_f g)* (T_f f)](\xi) = (RT_f g)(\xi) \qquad (\xi \in K). \]
In other words, the analysis operator $T\colon \langle f \rangle \to L^2(K)$ for the frame $\{ \rho(\xi) f \}_{\xi \in K}$ is given by 
\[ T = R T_f. \]
Moreover, the computation above shows that $V_f f = (T_f f)*(T_f f)$, so 
\begin{equation} \label{eq:convFrmOp}
R^2 T_f = \mathcal{G} T_f = T_f S.
\end{equation}
The operator $R$ is positive semidefinite; for any $\phi \in \ran T_f$, we have
\[ \langle \phi, R(\phi) \rangle = \langle \phi, \phi * (T_f f) \rangle = \langle \phi^* * \phi, T_f f \rangle = \int_K (\phi^* * \phi)(\xi)\cdot \overline{(T_f f)(\xi)}\, d\xi \geq 0,\]
since $\overline{T_f f}$ is also a function of positive type. Since $T_f$ is a linear isometry, it follows from \eqref{eq:convFrmOp} that $T_f S^{1/2} = R T_f = T$. Equivalently, $T_f = T S^{-1/2}$, as desired.

\medskip

One is often interested in the spectrum $\sigma(\G)$ of the Gramian, since the optimal frame bounds are precisely the infimum and supremum of $\sigma(\G) \setminus\{0\}$. For a general positive semidefinite operator, finding the spectrum means diagonalization, which may be extremely difficult. For group frames, however, the realization of $\G$ as a convolution operator in \eqref{eq:framOpGrm1} can take us a long way in this direction, as in the proposition below.

\begin{prop} \label{prop:blkDiagGrm}
Fix any $f \in \H_\rho$, and let $\mathcal{G} \colon L^2(K) \to L^2(K)$ be the Gramian for the Bessel mapping $\{\rho(\xi) f\}_{\xi \in K}$. For each $\pi \in \hat{K}$, choose an orthonormal basis for $B(\H_\pi)$ with respect to the inner product $\langle A, B \rangle = d_\pi \tr(B^* A)$. Let $M_{[f,f](\pi)} \in M_{d_\pi^2}(\C)$ be the matrix over this basis for the operator $M_{[f,f](\pi)} \colon B(\H_\pi) \to B(\H_\pi)$ given by
\[ M_{[f,f](\pi)}(A) = [f,f](\pi)\cdot A. \]
If $\hat{K} = \{ \pi_1, \pi_2,\dotsc\}$, then $\G$ is unitarily equivalent to the block diagonal matrix
\[ \tilde{\mathcal{G}} = \begin{pmatrix}
M_{[f,f](\pi_1)} & & & 0 \\
& M_{[f,f](\pi_2)} && \\
0 && \ddots
\end{pmatrix}, \]
and the Fourier transform $\F \colon L^2(K) \to \bigoplus_{\pi \in \hat{K}} B(\H_\pi)$ is a conjugating unitary. That is, $\tilde{\mathcal{G}} = \F \mathcal{G} \F^{-1}$.
\end{prop}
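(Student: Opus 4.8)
The plan is to exploit the fact, recorded in \eqref{eq:framOpGrm1}, that the Gramian is the convolution operator $\mathcal{G}(\phi) = \phi * V_f f$, together with the principle that the Fourier transform diagonalizes convolution by converting it into operator multiplication. Since the bracket $[f,f](\pi)$ is by definition exactly $(V_f f)\caret(\pi)$, one expects $\F$ to conjugate $\mathcal{G}$ into left multiplication by $[f,f](\pi)$ in each coordinate $\pi \in \hat{K}$.

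First I would fix $\phi \in L^2(K)$ and compute the Fourier transform of $\mathcal{G}\phi$ coordinatewise. Applying the convolution identity \eqref{eq:FourConv}, namely $(f*g)\caret(\pi) = \hat{g}(\pi)\hat{f}(\pi)$, with $g = V_f f$ and $f = \phi$, yields
\[ (\mathcal{G}\phi)\caret(\pi) = (\phi * V_f f)\caret(\pi) = (V_f f)\caret(\pi)\,\hat\phi(\pi) = [f,f](\pi)\cdot \hat\phi(\pi) \qquad (\pi \in \hat{K}). \]
In the notation of the proposition, the right-hand side is precisely $M_{[f,f](\pi)}(\hat\phi(\pi))$. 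Writing $\F\phi = (\hat\phi(\pi))_{\pi \in \hat{K}}$, this says $(\F\mathcal{G}\phi)(\pi) = M_{[f,f](\pi)}\bigl((\F\phi)(\pi)\bigr)$ for every $\pi$, i.e. $\F\mathcal{G} = \bigl(\bigoplus_{\pi} M_{[f,f](\pi)}\bigr)\F$ as operators on $\bigoplus_{\pi} B(\H_\pi)$.

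Since $\F$ is unitary by Plancherel's Theorem, I would then rearrange to obtain $\F\mathcal{G}\F^{-1} = \bigoplus_{\pi} M_{[f,f](\pi)}$, which is exactly the claimed block-diagonal operator $\tilde{\mathcal{G}}$: expressing each $M_{[f,f](\pi)}$ in the chosen orthonormal basis for $B(\H_\pi)$ produces the block $M_{[f,f](\pi)} \in M_{d_\pi^2}(\C)$, and these blocks are arranged along the diagonal indexed by $\hat{K} = \{\pi_1, \pi_2, \dotsc\}$. This completes the identification $\tilde{\mathcal{G}} = \F\mathcal{G}\F^{-1}$.

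There is no genuine obstacle here; the only point requiring care is the bookkeeping of the order of multiplication in \eqref{eq:FourConv}. Because the Fourier transform reverses the order of a convolution product, I must verify that $(\phi * V_f f)\caret(\pi)$ equals $[f,f](\pi)\,\hat\phi(\pi)$ rather than $\hat\phi(\pi)\,[f,f](\pi)$ — that is, that it is genuinely \emph{left} multiplication by $[f,f](\pi)$, matching the definition $M_{[f,f](\pi)}(A) = [f,f](\pi)\cdot A$. Tracking the convention in \eqref{eq:FourConv} confirms this, and the boundedness needed to treat $\mathcal{G}$ as a genuine operator was already secured by the Bessel estimate preceding the proposition.
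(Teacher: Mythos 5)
Your argument is correct and is exactly the paper's proof, just written out in detail: the paper also cites \eqref{eq:framOpGrm1} together with \eqref{eq:FourConv} and notes the result follows, and your careful check that the order reversal in $(\phi * V_f f)\caret(\pi) = (V_f f)\caret(\pi)\,\hat\phi(\pi)$ yields \emph{left} multiplication by $[f,f](\pi)$ is the one point worth spelling out.
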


\begin{proof}
This is obvious from the formulae \eqref{eq:framOpGrm1}, which gives the Gramian as a convolution operator, and \eqref{eq:FourConv}, which says the Fourier transform turns convolution operators into multiplication operators.
\end{proof}

Proposition \ref{prop:blkDiagGrm} leads to an alternative proof of Theorem \ref{thm:brackFrm}. Briefly: the spectrum of $\G$ is the union of the eigenvalues for $M_{[f,f](\pi)}$ as $\pi$ runs through $\hat{K}$, and the eigenvalues for $M_{[f,f](\pi)}$ are the same as those for $[f,f](\pi)$. Now use the fact that a Bessel mapping is a frame if and only if the nonzero elements of $\sigma(\G)$ are bounded away from zero, with the optimal frame bounds equal to the infimum and supremum of $\sigma(\G)\setminus\{0\}$, respectively.

%classification of $K$-frames
\subsection{Classification of $K$-frames}

Continuous frames of the form $\{\rho(\xi) f\}_{\xi \in K}$ are sometimes called $K$-frames. We will say that $\rho$ \emph{admits} a $K$-frame if $\H_\rho$ has a continuous frame of this form. In that case, the orbit of $f$ spans $\H_\rho$, so in particular $\rho$ is cyclic. Greenleaf and Moskowitz \cite[Theorem 1.10]{GM} have reduced the property of being cyclic to a count of multiplicities of irreducible representations. Explicitly, they have shown that $\rho$ is cyclic if and only if $\mult(\pi,\rho) \leq d_\pi$ for each $\pi \in \hat{K}$. The following theorem refines this result for $K$-frames.

\begin{theorem} \label{thm:KFrmDim}
The following are equivalent.
\begin{enumerate}[(i)]
\item $\rho$ admits a $K$-frame.
\item $\rho$ admits a Parseval $K$-frame.
\item $\rho$ is cyclic, and $\dim \H_\rho < \infty$.
\item For all $\pi \in \hat{K}$, $\mult(\pi, \rho) \leq d_\pi$. Moreover, $\mult(\pi,\rho) = 0$ for all but finitely many $\pi \in \hat{K}$.
\end{enumerate}
\end{theorem}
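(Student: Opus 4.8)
\textbf{Proof plan for Theorem \ref{thm:KFrmDim}.}

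The plan is to prove a cycle of implications (i) $\Rightarrow$ (iii) $\Rightarrow$ (iv) $\Rightarrow$ (ii) $\Rightarrow$ (i), using the bracket machinery of Section \ref{sec:brack} together with the counting results from Proposition \ref{prop:brackProp3} and Greenleaf--Moskowitz. The implication (ii) $\Rightarrow$ (i) is trivial, since a Parseval $K$-frame is in particular a $K$-frame. The conceptual heart of the argument is translating the frame and cyclicity conditions into arithmetic statements about multiplicities via $\rank [f,f](\pi) = \mult(\overline{\pi},\rho^{\langle f \rangle})$ (Proposition \ref{prop:brackProp3}(iii)).

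First I would establish (i) $\Rightarrow$ (iii). If $\rho$ admits a $K$-frame $\{\rho(\xi)f\}_{\xi \in K}$, then by definition this is a frame for $\H_\rho$, so its closed span is all of $\H_\rho$; hence $\langle f \rangle = \H_\rho$ and $\rho$ is cyclic. The finite-dimensionality is the substantive point. By Theorem \ref{thm:brackFrm}, the nonzero eigenvalues of each $[f,f](\pi)$ lie in $[A,B]$, and since $\{\rho(\xi)f\}$ spans $\H_\rho$ the vector $f$ is cyclic, so $\rank [f,f](\pi) = \mult(\overline{\pi},\rho)$ for every $\pi$. The frame inequality forces the smallest nonzero eigenvalue to be bounded below by $A > 0$ uniformly in $\pi$; combined with the trace identity \eqref{eq:innBrack}, namely $\Norm{f}^2 = \sum_{\pi} d_\pi \tr([f,f](\pi)) \geq A \sum_\pi d_\pi \cdot \rank[f,f](\pi) = A \sum_\pi d_\pi \cdot \mult(\overline{\pi},\rho)$, the finiteness of $\Norm{f}^2$ shows that only finitely many terms can be nonzero and that $\sum_\pi d_\pi \cdot \mult(\overline{\pi},\rho) < \infty$. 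By \eqref{eq:multDimIso} (applied summand by summand as in \eqref{eq:dirSumIso}) this sum equals $\dim \H_\rho$, giving $\dim \H_\rho < \infty$.

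Next, (iii) $\Rightarrow$ (iv): cyclicity gives $\mult(\pi,\rho) \leq d_\pi$ for all $\pi$ directly from Greenleaf--Moskowitz \cite[Theorem 1.10]{GM}, and $\dim \H_\rho = \sum_\pi d_\pi \cdot \mult(\pi,\rho) < \infty$ forces all but finitely many multiplicities to vanish. For (iv) $\Rightarrow$ (ii), I would construct an explicit Parseval $K$-frame generator. The finiteness in (iv) makes $\H_\rho$ finite-dimensional, so $\rho$ is cyclic by Greenleaf--Moskowitz again. The main obstacle — and the only place requiring real construction — is exhibiting a vector $f$ whose bracket is a projection in the sense of Theorem \ref{thm:brackFrm} with $A=B=1$. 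Using the isotypical decomposition \eqref{eq:dirSumIso} and the fact that $\rho^{\mathcal{M}_\pi}$ is a multiple $m_\pi \cdot \overline{\overline{\pi}}$ with $m_\pi = \mult(\pi,\rho) \leq d_{\overline{\pi}} = d_\pi$, I would choose $f = \sum_\pi f_\pi$ so that within each isotypical block $[f,f](\overline{\pi})$ is an orthogonal projection of rank exactly $m_\pi$; this is possible precisely because $m_\pi \leq d_\pi$ (one cannot fit a rank-$m_\pi$ projection into $B(\H_{\overline{\pi}})$ as a bracket otherwise). Concretely, embedding $\H_\rho$ into $L^2(G)$ via Theorem \ref{thm:isom} and invoking Proposition \ref{prop:brackProp3}, one builds $f$ coordinatewise in the Zak/Fourier picture so that each $(Zf)(\overline{\pi})$ is a partial isometry whose range is the assigned $m_\pi$-dimensional subspace. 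Then $\rank[f,f](\pi) = m_{\overline{\pi}} = \mult(\overline{\pi},\rho)$ for all $\pi$ certifies via Proposition \ref{prop:cycBrack} that $f$ is cyclic, so $\langle f \rangle = \H_\rho$, and the eigenvalue condition with all nonzero eigenvalues equal to $1$ yields a Parseval $K$-frame by Theorem \ref{thm:brackFrm}. I expect the bookkeeping of arranging the projection within each block — matching multiplicities $m_\pi$ against dimensions $d_\pi$ — to be the delicate step, though it is conceptually forced by the inequality $m_\pi \leq d_\pi$.
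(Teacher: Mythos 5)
Your proposal is correct and follows essentially the same route as the paper: the finiteness in (i) $\Rightarrow$ (iii)/(iv) comes from the trace bound $\Norm{f}^2 = \sum_\pi d_\pi \tr([f,f](\pi)) \geq A \sum_\pi d_\pi \operatorname{rank}[f,f](\pi)$ exactly as in the paper, (iii) $\Leftrightarrow$ (iv) is the same appeal to Greenleaf--Moskowitz plus the dimension formula, and your construction for (iv) $\Rightarrow$ (ii) (a generator whose Fourier/Zak coordinates are partial isometries onto $\operatorname{mult}(\overline{\pi},\rho)$-dimensional subspaces, so that each $[f,f](\pi)$ is an orthogonal projection) is the paper's construction, which realizes $\H_\rho$ as a translation-invariant subspace $V_J \subset L^2(K)$ and takes $\hat{f}(\pi) = P_{J(\pi)}$. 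The only cosmetic differences are the shape of the implication cycle and your detour through Theorem \ref{thm:isom} where the paper embeds directly into $L^2(K)$ using the multiplicity bound in (iv).
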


The result of Greenleaf and Moskowitz mentioned above says, in part, that every subrepresentation of the regular representation of $K$ on $L^2(K)$ admits a cyclic vector. Theorem \ref{thm:KFrmDim} shows that this result can not be improved using the language of frames. In particular, the regular representation admits a $K$-frame if and only if $K$ is finite.

\begin{proof}
The equivalence of (iii) and (iv) is obvious from \cite[Theorem 1.10]{GM} and the formula
\[ \dim \H_\rho = \sum_{\pi \in \hat{K}} d_\pi\cdot \mult(\pi,\rho). \]
It remains to prove that (i), (ii), and (iv) are equivalent.

(i) $\implies$ (iv). Let $f \in \H_\rho$ be such that $\{\rho(\xi) f\}_{\xi \in K}$ is a continuous frame for $\H_\rho$, with lower frame bound $A > 0$. Since $\rho$ is cyclic, \cite[Theorem 1.10]{GM} shows that $\mult(\pi,\rho) \leq d_\pi$ for all $\pi \in \hat{K}$. By Proposition \ref{prop:brackProp2}(i), Theorem \ref{thm:brackFrm}, and Proposition \ref{prop:brackProp3}(iii),
\[ \Norm{f}^2 = \sum_{\pi \in \hat{K}} d_\pi \tr( [f,f](\pi) ) \geq \sum_{\pi \in \hat{K}} d_\pi A \cdot \rank([f,f](\pi)) = A \sum_{\pi \in \hat{K}} d_\pi \mult(\overline{\pi},\rho). \]
Consequently, $\mult(\pi,\rho) = 0$ for all except finitely many $\pi \in \hat{K}$.

(iv) $\implies$ (ii). We are going to embed $\H_\rho$ as a translation-invariant subspace of $L^2(K)$. Recalling that the Zak transform for the pair $(K,K)$ is the usual Fourier transform on $L^2(K)$ (see Remark \ref{rem:ZakFour}), we can then use the results of Section \ref{sec:ranTran} to analyze $\H_\rho$. 

For each $\pi \in \hat{K}$, choose a subspace $J(\pi) \subset \H_\pi$ of dimension equal to $\mult(\overline{\pi},\rho)$. Let
\[ V_J = \{ f \in L^2(K) : \ran \hat{f}(\pi) \subset J(\pi) \text{ for each }\pi \in \hat{K} \} \]
be the translation invariant subspace of $L^2(K)$ corresponding to the range function $J$. Since representations of $K$ are determined up to unitary equivalence by multiplicities of irreducible representations, we may assume by \eqref{eq:multRan} that $\H_\rho = V_J$, and that $\rho$ is given by left translation. For each $\pi \in \hat{K}$, let $P_\pi \in B(\H_\pi)$ be orthogonal projection onto $J(\pi)$. Then
\[ \sum_{\pi \in \hat{K}} d_\pi \Norm{P_\pi}_{\HS}^2 = \sum_{\pi \in \hat{K}} d_\pi \dim J(\pi) = \sum_{\pi \in \hat{K}} d_\pi \mult(\overline{\pi},\rho) < \infty, \]
so there is a function $f \in L^2(K)$ with $\hat{f}(\pi) = P_\pi$ for all $\pi \in \hat{K}$, by Plancherel's Theorem. Moreover, $\langle f \rangle = V_J = \H_\rho$ by Theorem \ref{thm:ranGen}. Finally, Lemma \ref{lem:ZakBrack} shows that
\[ [f,f](\pi) = \hat{f}(\pi)^* \hat{f}(\pi) = P_\pi \qquad (\pi \in \hat{K}), \]
so $\{\rho(\xi) f\}_{\xi \in K}$ is a continuous Parseval frame for $\H_\rho$, by Theorem \ref{thm:brackFrm}.

(ii) $\implies$ (i). This is trivial.
\end{proof}

Two $K$-frames $\{\rho(\xi)f\}_{\xi \in K}$ and $\{\rho'(\xi) f'\}_{\xi \in K}$ are \emph{unitarily equivalent} if there is a unitary $U\colon \H_\rho \to \H_{\rho'}$ such that $U \rho(\xi)f = \rho'(\xi) f$ for all $\xi \in K$. Equivalently, $U$ is a unitary equivalence of $\rho$ and $\rho'$ satisfying $U f = f'$. We now classify $K$-frames up to unitary equivalence.

In the theorem below, we treat $L^2(K)$ as a Banach $*$-algebra under convolution. Thus, a \emph{projection} in $L^2(K)$ is a function $f$ with the property that $f = f * f = f^*$. Equivalently, it is a function $f$ such that $\hat{f}(\pi)$ is an orthogonal projection for each $\pi \in \hat{K}$. We also write
\[ \mathcal{E}(K) = \{f \in L^2(K) : \hat{f}(\pi) = 0 \text{ for all but finitely many }\pi \in \hat{K}\} \]
for the space of trigonometric polynomials on $K$. Every projection in $L^2(K)$ belongs to $\mathcal{E}(K)$.

\begin{theorem} \label{thm:KFrmClas}
Up to unitary equivalence, $K$-frames are indexed by functions of positive type in $\mathcal{E}(K)$. If $f$ is such a function, the associated frame is $\{L_\xi f \}_{\xi \in K}$. The same correspondence sets up a bijection between equivalence classes of Parseval $K$-frames and projections in $L^2(K)$. 
\end{theorem}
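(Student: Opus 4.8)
The plan is to show that the assignment $f \mapsto \{L_\xi f\}_{\xi \in K}$ is a well-defined bijection from functions of positive type in $\mathcal{E}(K)$ onto unitary-equivalence classes of $K$-frames, and then to check that it restricts to a bijection between projections and Parseval $K$-frames. First I would verify the map lands in $K$-frames. Given a function of positive type $f \in \mathcal{E}(K)$, equation \eqref{eq:posFour} gives $\hat f(\pi) \geq 0$, and since $V_f f = f * f^*$ by \eqref{eq:transPos}, the bracket for left translation is $[f,f](\pi) = \hat f(\pi)^* \hat f(\pi) = \hat f(\pi)^2$. Because $f \in \mathcal{E}(K)$, only finitely many $\hat f(\pi)$ are nonzero, so across all $\pi \in \hat K$ the operators $[f,f](\pi)$ contribute only finitely many nonzero eigenvalues, all strictly positive; these are automatically bounded away from $0$ and $\infty$, so Theorem \ref{thm:brackFrm} shows $\{L_\xi f\}_{\xi \in K}$ is a continuous frame for $\langle f\rangle$, i.e.\ a $K$-frame.

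For surjectivity I would run an arbitrary $K$-frame through Lemma \ref{lem:cycIsom}. If $\{\rho(\xi) g\}_{\xi \in K}$ is a $K$-frame for $\H_\rho$, then it spans $\H_\rho$, so $\langle g\rangle = \H_\rho$, and the isometry $T_g \colon \langle g\rangle \to L^2(K)$ intertwines $\rho$ with left translation and sends $g$ to the positive-type function $f := T_g g$, whose Fourier transform is $[g,g](\pi)^{1/2}$. Thus $T_g$ is a unitary equivalence of the $K$-frames $\{\rho(\xi) g\}$ and $\{L_\xi f\}$. Moreover $f \in \mathcal{E}(K)$: since $\{\rho(\xi) g\}$ is a $K$-frame, Theorem \ref{thm:KFrmDim} forces $\mult(\overline\pi,\rho) = 0$ for all but finitely many $\pi$, so Proposition \ref{prop:brackProp3}(iii) gives $[g,g](\pi) = 0$, hence $\hat f(\pi) = 0$, for all but finitely many $\pi$. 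Every $K$-frame is therefore equivalent to a translation frame of the required form.

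For injectivity I would recover $f$ from the frame via the unique positive square root. Suppose $f, f'$ are positive-type functions in $\mathcal{E}(K)$ with $\{L_\xi f\}$ and $\{L_\xi f'\}$ unitarily equivalent, say via a unitary $U$ commuting with left translation and sending $f \mapsto f'$. Then $\langle f', L_\xi f'\rangle = \langle Uf, U L_\xi f\rangle = \langle f, L_\xi f\rangle$ for all $\xi$, so by \eqref{eq:transPos} and the Fourier transform $\hat{f'}(\pi)^2 = \hat f(\pi)^2$ for every $\pi$; as both operators are positive semidefinite, uniqueness of the positive square root yields $\hat{f'}(\pi) = \hat f(\pi)$, whence $f = f'$ by Plancherel. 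This proves the first assertion. For the Parseval refinement, Theorem \ref{thm:brackFrm} shows $\{L_\xi f\}$ is Parseval exactly when every nonzero eigenvalue of $[f,f](\pi) = \hat f(\pi)^2$ equals $1$, i.e.\ when each $\hat f(\pi)$ is an orthogonal projection, which is precisely the condition that $f$ be a projection in $L^2(K)$; since every such projection already lies in $\mathcal{E}(K)$ (as noted before the theorem) and is of positive type, the bijection restricts as claimed. I expect the main obstacle to be the surjectivity step: the universality of the translation model rests on Lemma \ref{lem:cycIsom}, and one must correctly identify the invariant attached to a frame as $T_g g$ (equivalently the positive square root of $[g,g]$) rather than the more obvious positive-type function $V_g g = \langle g, \rho(\cdot) g\rangle$, whose Fourier transform is $[g,g]$ itself and so would fail to reproduce the translation frame.
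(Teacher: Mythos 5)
Your proposal is correct and follows essentially the same route as the paper: both reduce an arbitrary $K$-frame to the translation model via Lemma \ref{lem:cycIsom}, identify the invariant as the positive-type function with Fourier transform $[g,g](\pi)^{1/2}$, and then read off the frame and Parseval conditions from Theorem \ref{thm:brackFrm} together with the finiteness criterion of Theorem \ref{thm:KFrmDim}. The only difference is organizational: the paper packages well-definedness and injectivity into the statement that positive-type functions give a ``complete and irredundant set of cyclic structures,'' whereas you verify surjectivity and injectivity separately, recovering uniqueness from the positive square root of $\hat{f}(\pi)^2$ rather than from the uniqueness clause of Lemma \ref{lem:cycIsom}.
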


In the special case where $K$ is finite, some aspects of this theorem appear implicitly in Vale and Waldron \cite{VW}. See also Han \cite{Ha}. For Parseval $K$-frames, the fact that the generating function $f$ is a projection implies that $V_f f = f * f^* = f$. By \eqref{eq:framOpGrm1}, the Gramian of the associated frame is the convolution operator $g \mapsto g * f$, which is orthogonal projection onto $\langle f \rangle$. In this sense, the theorem above may be compared with a result of Han and Larson \cite[Corollary 2.7]{HL}, which says that the correspondence between a frame and its Gramian induces a bijection between equivalence classes of Parseval frames indexed by a set $I$, and orthogonal projections on $\ell^2(I)$. For continuous frames, a similar result appears in \cite[Proposition 2.1]{GH}. Lots of orthogonal projections on $L^2(K)$ correspond to continuous Parseval frames over $K$. The projections that correspond to $K$-frames are precisely those given by convolution.

\begin{proof}
We use the term \emph{cyclic structure} for a pair $(\rho,f)$ consisting of a cyclic representation $\rho$ and a cyclic vector $f\in \H_\rho$. Call two cyclic structures $(\rho,f)$ and $(\rho',f')$ \emph{equivalent} if there is a unitary equivalence between $\rho$ and $\rho'$ that maps $f$ to $f'$. This agrees with the notion of equivalence of $K$-frames. Given $f\in L^2(K)$, we will denote $\rho_f$ for the subrepresentation of the regular representation on
\[ \langle f \rangle = \{ g \in L^2(K) : \ran \hat{g}(\pi) \subset \ran \hat{f}(\pi) \text{ for all }\pi \in \hat{K}\}. \]
Lemma \ref{lem:cycIsom} shows that
\[ \{ (\rho_f, f) : f \in L^2(K)\text{ is a function of positive type}\} \]
is a complete and irredundant set of cyclic structures, up to equivalence. For a fixed function $f \in L^2(K)$ of positive type, it only remains to show
\begin{equation} \label{eq:KFrmClas1}
\{L_\xi f \}_{\xi \in K} \text{ is a frame for }\langle f \rangle \iff \hat{f}(\pi) = 0 \text{ for all but finitely many }\pi \in \hat{K}
\end{equation}
and
\begin{equation} \label{eq:KFrmClas2}
\{L_\xi f\}_{\xi \in K} \text{ is a Parseval frame for }\langle f \rangle \iff \hat{f}(\pi) \text{ is an orthogonal projection for all }\pi \in \hat{K}.
\end{equation}

The forward implication of \eqref{eq:KFrmClas1} follows from Theorem \ref{thm:KFrmDim}, since 
\[ \mult(\overline{\pi}, \rho_f) = \rank \hat{f}(\pi) \qquad (\pi \in \hat{K}), \]
by \eqref{eq:multRan}. For the reverse implication, suppose that $\hat{f}(\pi) = 0$ for all but finitely many $\pi \in \hat{K}$. Then the operators $[f,f](\pi) = \hat{f}(\pi)^2$, $\pi \in \hat{K}$, have only finitely many nonzero eigenvalues between them, so $\{L_\xi f\}_{\xi \in K}$ is a continuous frame, by Theorem \ref{thm:brackFrm}.

To prove \eqref{eq:KFrmClas2}, recall that $\hat{f}(\pi) \geq 0$ for all $\pi \in \hat{K}$, so the eigenvalues of $[f,f](\pi) = \hat{f}(\pi)^2$ are precisely the squares of the eigenvalues of $\hat{f}(\pi)$. By Theorem \ref{thm:brackFrm}, $\{L_\xi f\}_{\xi \in K}$ is a continuous Parseval frame for $\langle f \rangle$ if and only if 0 and 1 are the only eigenvalues of $\hat{f}(\pi)$, $\pi \in \hat{K}$. Since the operators $\hat{f}(\pi)$ are self-adjoint, that happens if and only if each $\hat{f}(\pi)$ is an orthogonal projection.
\end{proof}

\begin{rem}
A function $f \in L^2(K)$ is a projection if and only if $\hat{f}(\pi)$ is an orthogonal projection for each $\pi \in \hat{K}$. If we let $J(\pi) = \ran \hat{f}(\pi) \subset \H_\pi$, we see that Parseval $K$-frames can also be classified by range functions in $\{ \H_\pi \}_{\pi \in \hat{K}}$ with the property that $J(\pi) = 0$ for all but finitely many $\pi \in \hat{K}$.
\end{rem}

Given a projection $f \in L^2(K)$, $\{L_\xi f\}_{\xi \in K}$ is a frame only for its closed linear span in $L^2(K)$, not necessarily for the whole space. This is troublesome in practice, where one usually wants coordinates for a frame in its ``native domain''. The corollary below gives such coordinates for every Parseval $K$-frame. When a matrix space $M_{m,n}(\C)$ is treated as a Hilbert space below, its inner product is gotten from the natural identification with $\C^{mn}$.

\begin{cor} \label{cor:nonAbelHarm}
For each $\pi \in \hat{K}$, choose an integer $r_\pi \in \{0,\dotsc,d_\pi\}$, in such a way that only finitely many $r_\pi \neq 0$. Choose an orthonormal basis for $\H_\pi$, and let $\pi_{i,j} \in C(K)$ be the corresponding matrix elements. Given $\xi \in K$, define $M_\xi(\pi) \in M_{r_\pi,d_\pi}(\C)$ by 
\[ M_\xi(\pi) = (\sqrt{d_\pi} \pi_{i,j}(\xi) )_{1\leq i \leq r_\pi, 1 \leq j \leq d_\pi}. \]
Then $\{M_\xi \}_{\xi \in K}$ is a continuous Parseval frame for $\bigoplus_{\pi \in \hat{K}} M_{r_\pi,d_\pi}(\C)$, and it is a $K$-frame when indexed $\{ M_{\xi^{-1}}\}_{\xi \in K}$. Up to unitary equivalence, every Parseval $K$-frame is produced in this way. 
\end{cor}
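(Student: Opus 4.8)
The plan is to reduce to the concrete model supplied by Theorem \ref{thm:KFrmClas} and then read off explicit coordinates. By that theorem, every Parseval $K$-frame is unitarily equivalent to $\{L_\xi f\}_{\xi \in K}$ for some projection $f \in L^2(K)$, so it suffices to realize this family in the stated matrix form. Each $\hat{f}(\pi)$ is then an orthogonal projection, say of rank $r_\pi$, with only finitely many $r_\pi$ nonzero. First I would choose, for every $\pi \in \hat{K}$, an orthonormal basis $e_1^\pi, \dotsc, e_{d_\pi}^\pi$ of $\H_\pi$ whose first $r_\pi$ vectors span $\ran \hat{f}(\pi)$; relative to this basis $\hat{f}(\pi)$ is the projection $P_\pi$ onto $\spn\{e_1^\pi, \dotsc, e_{r_\pi}^\pi\}$, and the matrix elements $\pi_{i,j}$ appearing in the statement are those of this basis.

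Next I would build the unitary identification of $\langle f \rangle$ with the matrix direct sum. By Theorem \ref{thm:ranGen} (taken with $G = K$, so that the Zak transform is the Fourier transform, as in Remark \ref{rem:ZakFour}), we have $\langle f \rangle = V_J$ with $J(\pi) = \ran \hat{f}(\pi) = \spn\{e_1^\pi, \dotsc, e_{r_\pi}^\pi\}$; thus $g \in \langle f \rangle$ exactly when every matrix $\hat{g}(\pi)$ has vanishing rows beyond the $r_\pi$-th. I would then define $\Phi \colon \langle f \rangle \to \bigoplus_{\pi \in \hat{K}} M_{r_\pi,d_\pi}(\C)$ by $\Phi(g)(\pi) = (\sqrt{d_\pi}\,\hat{g}(\pi)_{i,j})_{1 \leq i \leq r_\pi,\ 1 \leq j \leq d_\pi}$. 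Plancherel's Theorem \eqref{eq:Planch}, together with the vanishing of the discarded rows, shows that $\Phi$ is isometric; surjectivity is clear, so $\Phi$ is a unitary, the factor $\sqrt{d_\pi}$ exactly absorbing the Plancherel weight so that each summand $M_{r_\pi,d_\pi}(\C)$ carries its standard inner product.

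The computation that drives everything is the translation rule \eqref{eq:FourTrans}: $(L_\xi f)\caret(\pi) = \hat{f}(\pi)\,\pi(\xi^{-1}) = P_\pi\,\pi(\xi^{-1})$, which retains precisely the first $r_\pi$ rows of the matrix $(\pi_{i,j}(\xi^{-1}))_{i,j}$. Hence $\Phi(L_\xi f) = M_{\xi^{-1}}$, and in particular $\Phi(f) = M_e$ (with $e$ the identity), which plays the role of the generator. Transporting left translation through $\Phi$ yields a unitary representation of $K$ on $\bigoplus_\pi M_{r_\pi,d_\pi}(\C)$ whose orbit of $\Phi(f)$ is exactly $\{M_{\xi^{-1}}\}_{\xi \in K}$; this is a $K$-frame, and it is Parseval because $\Phi$ is unitary and $\{L_\xi f\}_{\xi \in K}$ is a Parseval frame for $\langle f \rangle$. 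Since Haar measure on the compact group $K$ is invariant under inversion, the reindexing $\xi \mapsto \xi^{-1}$ leaves the frame operator unchanged, so $\{M_\xi\}_{\xi \in K}$ is a Parseval frame as well. The ``up to unitary equivalence'' clause is then immediate: running this construction with a basis adapted to an arbitrary projection $f$ recovers, via Theorem \ref{thm:KFrmClas}, every Parseval $K$-frame.

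The main obstacle here is bookkeeping rather than any deep difficulty. I must keep the normalization straight so that $\Phi$ genuinely lands in $\bigoplus_\pi M_{r_\pi,d_\pi}(\C)$ with its standard (unweighted) inner product, and I must track the inversion $\xi \mapsto \xi^{-1}$ carefully, since it is precisely what forces the $K$-frame to appear in the form $\{M_{\xi^{-1}}\}_{\xi \in K}$ while the unadorned family $\{M_\xi\}_{\xi \in K}$ is seen to be Parseval only after invoking inversion-invariance of Haar measure.
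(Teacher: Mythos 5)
Your proof is correct and takes essentially the same approach as the paper: both identify $\langle f \rangle$ for a projection $f \in L^2(K)$ with $\bigoplus_{\pi} M_{r_\pi,d_\pi}(\C)$ via the $\sqrt{d_\pi}$-normalized Fourier transform, use the translation identity \eqref{eq:FourTrans} to send $L_\xi f \mapsto M_{\xi^{-1}}$, and invoke Theorem \ref{thm:KFrmClas} for exhaustiveness. The only presentational differences are that the paper also spells out the forward step of producing $f$ with $\hat f(\pi) = P_\pi$ from the given $r_\pi$ and bases via Plancherel's Theorem (which your argument uses implicitly), while you add the explicit inversion-invariance observation explaining why $\{M_\xi\}_{\xi \in K}$ itself, and not just $\{M_{\xi^{-1}}\}_{\xi \in K}$, is a Parseval frame.
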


\begin{proof}
First we will show that $\{M_{\xi^{-1}}\}_{\xi \in K}$ is a Parseval $K$-frame. For each $\pi \in \hat{K}$, let $e_1^\pi,\dotsc,e_{d_\pi}^\pi$ be the orthonormal basis for $\H_\pi$ used in the construction of $\{M_\xi\}_{\xi \in K}$. Let $P_\pi \in B(\H_\pi)$ be orthogonal projection onto $\spn\{e_1^\pi,\dotsc,e_{r_\pi}^\pi\}$. By Plancherel's Theorem, there is a projection $f \in L^2(K)$ with $\hat{f}(\pi) = P_\pi$ for each $\pi \in \hat{K}$. We are going to map
\[ \langle f \rangle = \{ g \in L^2(K) : \ran \hat{g}(\pi) \subset \ran P_\pi \text{ for each }\pi \in \hat{K} \} \]
unitarily onto $\bigoplus_{\pi \in \hat{K}} M_{r_\pi,d_\pi}(\C)$ in a way that sends the Parseval $K$-frame $\{ L_\xi f \}_{\xi \in K}$ to $\{M_{\xi^{-1}}\}_{\xi \in K}$. 

For each $\pi \in \hat{K}$, assign $B(\H_\pi)$ the inner product $\langle A, B \rangle = d_\pi \langle A, B \rangle_{\HS}$, as in Plancherel's Theorem. There is a unitary $U_\pi \colon B(\H_\pi) \to M_{d_\pi}(\C)$ that replaces each operator with $\sqrt{d_\pi}$ times its matrix over the chosen basis. Let 
\[ U\colon L^2(K) \to \bigoplus_{\pi \in \hat{K}} M_{d_\pi}(\C) \]
be the unitary that follows the Fourier transform $\F \colon L^2(K) \to \bigoplus_{\pi \in \hat{K}} B(\H_\pi)$ by an application of $U_\pi$ in every coordinate $\pi \in \hat{K}$. Given $\xi \in K$, the translation identity \eqref{eq:FourTrans} shows that
\[ (L_\xi f)\caret(\pi) = P_\pi \pi(\xi^{-1}) \qquad (\pi \in \hat{K}), \]
so the $\pi$-th coordinate of $U(L_\xi f)$ is the $d_\pi \times d_\pi$ matrix with $M_{\xi^{-1}}(\pi)$ in the top $r_\pi$ rows and zeros in the bottom $d_\pi - r_\pi$ rows. Moreover,
\[ U \langle f \rangle = \{ (A_\pi)_{\pi \in \hat{K}} \in \bigoplus_{\pi \in \hat{K}} M_{d_\pi}(\C) : \text{for each $\pi \in \hat{K}$, $A_\pi$ has zeros in the bottom $d_\pi - r_\pi$ rows} \}. \]
Following $U$ with the natural identification
\[ U \langle f \rangle \cong \bigoplus_{\pi \in \hat{K}} M_{r_\pi,d_\pi}(\C) \]
gives the desired unitary of $\langle f \rangle$ onto $\bigoplus_{\pi \in \hat{K}} M_{r_\pi,d_\pi}(\C)$.

To see that \emph{every} Parseval $K$-frame is produced in this way, reverse the procedure above for an arbitrary projection $f \in L^2(K)$. For each $\pi \in \hat{K}$, let $P_\pi = \hat{f}(\pi)$, let $r_\pi = \rank P_\pi$, and choose an orthonormal basis $e_1^\pi,\dotsc,e_{d_\pi}^\pi$ for $\H_\pi$ in such a way that $\ran P_\pi = \spn\{ e_1^\pi, \dotsc, e_{r_\pi}^\pi \}$. The Parsevel $K$-frame $\{M_{\xi^{-1}}\}_{\xi \in K}$ produced with these parameters is unitarily equivalent to $\{L_\xi f\}_{\xi \in K}$ through the isometries constructed above.
\end{proof}

In the special case where $K$ is finite and \emph{abelian}, the frames described in Corollary \ref{cor:nonAbelHarm} are precisely the ``harmonic'' frames made by deleting rows from a discrete Fourier transform (DFT) matrix. (See \cite{VW2} for another proof that harmonic frames come from group actions.) While each finite abelian group can be used to make only finitely many Parseval frames in this way, a nonabelian group can make uncountably many inequivalent Parseval frames, since there are uncountably many projections in $L^2(K)$. (For finite groups, this was observed in \cite{VW}.) Moreover, it is often possible to make \emph{real} frames using nonabelian groups, as in the next example.

\begin{example}
Let $K=D_3$. Use notation as in Example \ref{ex:dihedFrm}. If we choose each $r_\pi$ to be as large as possible in Corollary \ref{cor:nonAbelHarm}, we obtain the following tight frame:
\[ \begin{pmatrix}
\begin{pmatrix}
1
\end{pmatrix}
&
\begin{pmatrix}
1
\end{pmatrix}
&
\begin{pmatrix}
1
\end{pmatrix}
&
\begin{pmatrix}
1
\end{pmatrix}
&
\begin{pmatrix}
1
\end{pmatrix}
&
\begin{pmatrix}
1
\end{pmatrix} \\[5 pt]

\begin{pmatrix}
\pmb{1}
\end{pmatrix}
&
\begin{pmatrix}
\pmb{1}
\end{pmatrix}
&
\begin{pmatrix}
\pmb{1}
\end{pmatrix}
&
\begin{pmatrix}
\pmb{-1}
\end{pmatrix}
&
\begin{pmatrix}
\pmb{-1}
\end{pmatrix}
&
\begin{pmatrix}
\pmb{-1}
\end{pmatrix} \\[5 pt]

\begin{pmatrix}
\pmb{\sqrt{2}} & \pmb{0} \\
0 & \sqrt{2}
\end{pmatrix}
&
\begin{pmatrix}
\pmb{ \omega \sqrt{2} } & \pmb{0} \\
0 & \omega^2 \sqrt{2}
\end{pmatrix}
&
\begin{pmatrix}
\pmb{ \omega^2 \sqrt{2} } & \pmb{0} \\
0 & \omega \sqrt{2}
\end{pmatrix}
&
\begin{pmatrix}
\pmb{0} & \pmb{ \sqrt{2} } \\
\sqrt{2} & 0
\end{pmatrix}
&
\begin{pmatrix}
\pmb{0} & \pmb{ \omega\sqrt{2} } \\
\omega^2 \sqrt{2} & 0
\end{pmatrix}
&
\begin{pmatrix}
\pmb{0} & \pmb{ \omega^2 \sqrt{2} } \\
\omega \sqrt{2} & 0 
\end{pmatrix}
\end{pmatrix}. \]
We can get another tight frame by deleting some of the rows:
\[ \begin{pmatrix}

\begin{pmatrix}
\pmb{1}
\end{pmatrix}
&
\begin{pmatrix}
\pmb{1}
\end{pmatrix}
&
\begin{pmatrix}
\pmb{1}
\end{pmatrix}
&
\begin{pmatrix}
\pmb{-1}
\end{pmatrix}
&
\begin{pmatrix}
\pmb{-1}
\end{pmatrix}
&
\begin{pmatrix}
\pmb{-1}
\end{pmatrix} \\[5 pt]

\begin{pmatrix}
\pmb{\sqrt{2}} & \pmb{0}
\end{pmatrix}
&
\begin{pmatrix}
\pmb{ \omega \sqrt{2} } & \pmb{0}
\end{pmatrix}
&
\begin{pmatrix}
\pmb{ \omega^2 \sqrt{2} } & \pmb{0}
\end{pmatrix}
&
\begin{pmatrix}
\pmb{0} & \pmb{ \sqrt{2} }
\end{pmatrix}
&
\begin{pmatrix}
\pmb{0} & \pmb{ \omega\sqrt{2} }
\end{pmatrix}
&
\begin{pmatrix}
\pmb{0} & \pmb{ \omega^2 \sqrt{2} }
\end{pmatrix}
\end{pmatrix}. \]
This corresponds to choosing $r_1=0$ and $r_2=r_3 = 1$.
 Collapsing the interior matrices gives a tight frame for $\C^3$:
\[ \begin{pmatrix}
1 & 1 & 1 & -1 & -1 & -1 \\
\sqrt{2} & \omega\sqrt{2} & \omega^2 \sqrt{2} & 0 & 0 & 0 \\
0 & 0 & 0 & \sqrt{2} & \omega \sqrt{2} & \omega^2 \sqrt{2} \\
\end{pmatrix}. \]
The frame bound is $\card(D_3) = 6$; see Remark \ref{rem:frmBnds}.

Representing the two-dimensional representation over a different basis gives a completely different frame. If we use
\[ \pi_3(a) = \frac{1}{2} \begin{pmatrix}
-1 & -\sqrt{3} \\
 \sqrt{3} & -1
\end{pmatrix} \quad \text{and} \quad
\pi_3(b) = \begin{pmatrix}
1 & 0 \\
0 & -1
\end{pmatrix} \]
and choose rows exactly as above, we obtain the tight frame
\[ \begin{pmatrix}
1 & 1 & 1 & -1 & -1 & -1 \\
\sqrt{2} & -1/\sqrt{2} & -1/\sqrt{2} & \sqrt{2} & -1/\sqrt{2} & -1/\sqrt{2} \\
0 & -\sqrt{3/2} & \sqrt{3/2} & 0 & \sqrt{3/2} & -\sqrt{3/2}
\end{pmatrix}. \]
This time we used real representations, so we got a tight frame for $\R^3$.
\end{example}

%disjoint properties
\subsection{Disjointness properties}

Let $\H$ and $\K$ be separable Hilbert spaces carrying frames $\Phi=\{f_i\}_{i\in I}$ and $\Psi=\{g_i\}_{i\in I}$, respectively. 
We say that $\Phi$ and $\Psi$ are \emph{disjoint} if $\{ (f_i,g_i) \}_{i \in I}$ is a frame for $\H \oplus \K$. Disjoint frames were introduced independently by Balan \cite{Ba} and by Han and Larson \cite{HL}. For a detailed study of disjoint \emph{continuous} frames, see \cite{GH}. The corollary 
below says that $K$-frames from distinct isotypical components of $\rho$ are always disjoint, and that every $K$-frame can be decomposed into disjoint frames in this way. This will be generalized for group frames with multiple generators in  Corollary \ref{cor:multCompFrm}. Recall that $\mathcal{M}_\pi \subset \H_\rho$ denotes the isotypical component for $\pi \in \hat{K}$, and that $P_\pi \in B(\H_\rho)$ is orthogonal projection of $\H_\rho$ onto $\mathcal{M}_\pi$.

\begin{cor} \label{cor:compFrm}
Fix a vector $f \in \H_\rho$ and constants $A$ and $B$ with $0 < A \leq B < \infty$. The following are equivalent.
\begin{enumerate}[(i)]
\item $\{\rho(\xi) f\}_{\xi \in K}$ is a continuous frame for $\H_\rho$ with bounds $A, B$.
\item For each $\pi \in \hat{K}$, $\{ \rho(\xi) P_\pi f\}_{\xi \in K}$ is a continuous frame for $\mathcal{M}_\pi$ with bounds $A,B$.
\end{enumerate}
\end{cor}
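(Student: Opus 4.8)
The plan is to reduce both conditions (i) and (ii) to statements about the single operator family $[f,f](\pi)$, $\pi \in \hat{K}$, via the bracket machinery of Section \ref{sec:brack}, and then observe that the condition attached to the global frame is literally the conjunction of the conditions attached to the isotypical pieces. The engine is Theorem \ref{thm:brackFrm} together with Proposition \ref{prop:brackProp3}.

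First I would record how the bracket interacts with the isotypical projections. Since $P_\pi f \in \mathcal{M}_\pi$, Proposition \ref{prop:brackProp3}(i) gives $[P_\pi f, P_\pi f](\sigma) = 0$ for every $\sigma \neq \overline{\pi}$, so the bracket of $P_\pi f$ is concentrated in the single coordinate $\overline{\pi}$. For that coordinate I would compute, using Proposition \ref{prop:brackProp3}(ii) in each argument and the self-adjointness coming from positivity (Proposition \ref{prop:brackProp1}(iii)),
\[ [f,f](\overline{\pi}) = [P_\pi f, f](\overline{\pi}) = [f, P_\pi f](\overline{\pi})^* = [P_\pi f, P_\pi f](\overline{\pi})^* = [P_\pi f, P_\pi f](\overline{\pi}). \]
Thus $[P_\pi f, P_\pi f]$ agrees with $[f,f]$ in the coordinate $\overline{\pi}$ and vanishes elsewhere; in this sense $[f,f]$ is the ``diagonal sum'' of the brackets of the pieces $P_\pi f$.

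Next I would characterize each frame condition. By Theorem \ref{thm:brackFrm}, $\{\rho(\xi) P_\pi f\}_{\xi \in K}$ is a frame for its own span $\langle P_\pi f \rangle$ with bounds $A,B$ if and only if the nonzero eigenvalues of $[P_\pi f, P_\pi f](\sigma)$ lie in $[A,B]$ for all $\sigma$, which by the previous step is exactly the requirement that the nonzero eigenvalues of $[f,f](\overline{\pi})$ lie in $[A,B]$. The subtle point is that condition (ii) demands a frame for the full isotypical component $\mathcal{M}_\pi$, not merely for $\langle P_\pi f \rangle$, so I must separately force $\langle P_\pi f \rangle = \mathcal{M}_\pi$. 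Here Proposition \ref{prop:brackProp3}(iii) is decisive: $\rank [P_\pi f, P_\pi f](\overline{\pi}) = \mult(\pi, \rho^{\langle P_\pi f\rangle})$, while $\mult(\pi, \rho^{\mathcal{M}_\pi}) = \mult(\pi,\rho)$, so $\langle P_\pi f \rangle = \mathcal{M}_\pi$ precisely when $\rank [f,f](\overline{\pi}) = \mult(\pi,\rho)$. In the same way, $\{\rho(\xi) f\}_{\xi\in K}$ is a frame for $\H_\rho$ with bounds $A,B$ if and only if $\langle f\rangle = \H_\rho$, equivalently $\rank [f,f](\sigma) = \mult(\overline{\pi},\rho)$ read off Proposition \ref{prop:brackProp3}(iii) for all $\sigma = \overline{\pi}$, together with the eigenvalue bound on $[f,f](\sigma)$ for all $\sigma$.

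The final step is pure bookkeeping under the involution $\sigma \leftrightarrow \overline{\pi}$: the global eigenvalue condition ``for every $\sigma$, the nonzero eigenvalues of $[f,f](\sigma)$ lie in $[A,B]$'' is exactly the conjunction over $\pi \in \hat{K}$ of the local eigenvalue conditions, and the global span condition $\rank[f,f](\sigma) = \mult(\overline{\pi},\rho)$ is exactly the conjunction of the local span conditions $\rank[f,f](\overline{\pi}) = \mult(\pi,\rho)$. Combining these yields (i) $\iff$ (ii). I expect the main obstacle to be not any single computation but keeping the contragredient indexing straight in Step 1, and, more substantively, the span upgrade in Step 2: ensuring that ``frame for $\mathcal{M}_\pi$'' rather than ``frame for $\langle P_\pi f\rangle$'' is captured is exactly where the rank-equals-multiplicity identity of Proposition \ref{prop:brackProp3}(iii) must be invoked.
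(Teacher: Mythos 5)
Your proposal is correct, and its first half coincides with the paper's proof: the identity $[P_\pi f,P_\pi f](\sigma)=\delta_{\sigma,\overline{\pi}}\,[f,f](\overline{\pi})$ is exactly the paper's equation \eqref{eq:compFrm1} (specialized to $g=f$), and both arguments then feed this into Theorem \ref{thm:brackFrm} to see that the eigenvalue/bounds condition for $f$ is the conjunction over $\pi$ of the conditions for the $P_\pi f$. Where you diverge is the spanning step. The paper proves $\langle f\rangle=\H_\rho \iff \langle P_\pi f\rangle=\mathcal{M}_\pi$ for all $\pi$ by the orthogonality criterion of Proposition \ref{prop:brackProp1}(vi): a nonzero $g\perp\langle f\rangle$ is detected by $[f,g]=0$, which localizes to $[P_\pi f,P_\pi g]=0$ for some $\pi$ with $P_\pi g\neq 0$, and conversely. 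You instead use the rank--multiplicity identity of Proposition \ref{prop:brackProp3}(iii) and compare dimensions. Both work; the paper's route needs \eqref{eq:compFrm1} for arbitrary $g$ (not just $g=f$) but avoids any dimension count, while yours is more quantitative and dovetails with Proposition \ref{prop:cycBrack}. The one step you should make explicit is the passage from ``$\mult(\pi,\rho^{\langle P_\pi f\rangle})=\mult(\pi,\rho)$'' to ``$\langle P_\pi f\rangle=\mathcal{M}_\pi$'': for nested invariant subspaces this requires finite dimension of $\mathcal{M}_\pi$. There is no actual gap, since $\mult(\pi,\rho^{\langle P_\pi f\rangle})=\rank[f,f](\overline{\pi})\le d_\pi<\infty$ forces $\mult(\pi,\rho)$ to be finite whenever the two multiplicities agree (and when $\mult(\pi,\rho)=\infty$ both sides of your equivalence fail), but this is precisely the finiteness hypothesis built into Proposition \ref{prop:cycBrack}, so it deserves a sentence.
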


\begin{proof}
For each $\pi \in \hat{K}$ and each $g \in \H_\rho$, Proposition \ref{prop:brackProp3} shows that
\begin{equation} \label{eq:compFrm1}
[P_\pi f, P_\pi g](\sigma) = [P_\pi f, g](\sigma) = \begin{cases}
[f,g](\overline{\pi}), & \text{if }\sigma = \overline{\pi} \\
0, & \text{if }\sigma \neq \overline{\pi}.
\end{cases}
\end{equation}
Taking $g = f$ above, we see that $f$ satisfies condition (ii) of Theorem \ref{thm:brackFrm} if and only if each $P_\pi f$ does the same. It remains to show that $\langle f \rangle = \H_\rho$ if and only if $\langle P_\pi f \rangle = \mathcal{M}_\pi$ for each $\pi \in \hat{K}$.

If $\langle f \rangle \neq \H_\rho$, then we can find a nonzero vector $g \in \H_\rho$ with $[f,g] =0$, by Proposition \ref{prop:brackProp1}(vi). Find $\pi \in \hat{K}$ for which $P_\pi g \neq 0$. Then \eqref{eq:compFrm1} shows that $[P_\pi f, P_\pi g] = 0$, so that $P_\pi g \perp \langle P_\pi f \rangle$. Thus, $\langle P_\pi f \rangle \neq \mathcal{M}_\pi$.

Conversely, if there is some $\pi \in \hat{K}$ for which $\langle P_\pi f \rangle \neq \mathcal{M}_\pi$, then there is a nonzero vector $g \in \mathcal{M}_\pi$ with $0 = [P_\pi f, g] = [f, P_\pi g] = [f,g]$. Hence, $g \perp \langle f \rangle$, and $\langle f \rangle \neq \H_\rho$.
\end{proof}

Recall that $\rho$ is \emph{multiplicity free} when all of its isotypical components are irreducible. Equivalently, this means that $\mult(\pi,\rho) \in \{0,1\}$ for all $\pi \in \hat{K}$. Corollary \ref{cor:compFrm} leads to an extension of Example \ref{ex:irredFrm} for multiplicity free representations.

\begin{cor} \label{cor:multFrFrm}
Suppose $\rho$ is multiplicity free. Let $E = \{ \pi \in \hat{K} : \mult(\pi,\rho) \neq 0\}$. For a nonzero vector $f \in \H_\rho$, the following are equivalent.
\begin{enumerate}[(i)]
\item $\{ \rho(\xi) f\}_{\xi \in K}$ is a tight frame for $\H_\rho$.
\item For any $\pi,\sigma \in E$, $\Norm{P_\pi f}^2/d_\pi = \Norm{P_\sigma f}^2/d_\sigma$.
\end{enumerate}
When this happens, the optimal frame bound is the common value of $\Norm{P_\pi f}^2/d_\pi$ for $\pi \in E$.
\end{cor}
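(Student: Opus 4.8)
The plan is to bootstrap the irreducible computation of Example \ref{ex:irredFrm} through the isotypical decomposition furnished by Corollary \ref{cor:compFrm}. Because $\rho$ is multiplicity free, every nonzero isotypical component $\mathcal{M}_\pi$ (i.e.\ every $\pi \in E$) is already irreducible and equivalent to $\pi$, so $\dim \mathcal{M}_\pi = d_\pi$. Applying Example \ref{ex:irredFrm} to the subrepresentation $\rho^{\mathcal{M}_\pi}$ then tells us that, whenever $P_\pi f \neq 0$, the orbit $\{\rho(\xi) P_\pi f\}_{\xi \in K}$ is a continuous tight frame for $\mathcal{M}_\pi$ with bound exactly $\Norm{P_\pi f}^2/d_\pi$. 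This single observation is the engine of the whole argument; everything else is a matter of matching bounds across components and handling the components on which $f$ vanishes.

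First I would handle the forward direction. Suppose $\{\rho(\xi)f\}_{\xi \in K}$ is a tight frame for $\H_\rho$, say with bound $A$. By Corollary \ref{cor:compFrm}, applied with the common value $A=B$, for each $\pi \in \hat{K}$ the orbit $\{\rho(\xi) P_\pi f\}_{\xi \in K}$ is a tight frame for $\mathcal{M}_\pi$ with the \emph{same} bound $A$. For $\pi \in E$ this forces $P_\pi f \neq 0$, and the irreducible computation above identifies that bound as $\Norm{P_\pi f}^2/d_\pi$. Hence $\Norm{P_\pi f}^2/d_\pi = A$ for every $\pi \in E$, which is precisely (ii).

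Then for the converse I would assume (ii) and call $c$ the common value of $\Norm{P_\pi f}^2/d_\pi$ over $\pi \in E$. Since $f \neq 0$ lies in $\bigoplus_{\pi \in E}\mathcal{M}_\pi$, at least one $P_\pi f$ is nonzero, so $c > 0$; this in turn forces $P_\pi f \neq 0$ for every $\pi \in E$. By Example \ref{ex:irredFrm} each orbit $\{\rho(\xi) P_\pi f\}_{\xi \in K}$ is then a tight frame for $\mathcal{M}_\pi$ with the common bound $c$, while for $\pi \notin E$ the component $\mathcal{M}_\pi$ is trivial and the frame condition is vacuous. Feeding these into Corollary \ref{cor:compFrm} with $A=B=c$ assembles them into a tight frame for $\H_\rho$ with bound $c$, giving (i). Tightness makes the optimal lower and upper bounds coincide with $c$, which is the asserted common value $\Norm{P_\pi f}^2/d_\pi$.

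The argument is short because the heavy lifting is already done by Corollary \ref{cor:compFrm} and Example \ref{ex:irredFrm}, so I do not expect a genuine obstacle. The only points needing care are bookkeeping rather than mathematics: verifying that the \emph{same} scalar bound is shared across all components (so that Corollary \ref{cor:compFrm} applies in the tight case), and checking that the common value $c$ is strictly positive so that no component vector can secretly vanish. Both follow at once from the observation that $f \neq 0$ guarantees a nonzero component.
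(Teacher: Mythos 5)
Your proof is correct and follows exactly the route the paper intends: the corollary is stated immediately after Corollary \ref{cor:compFrm} precisely as the combination of that reduction to isotypical components with the irreducible computation of Example \ref{ex:irredFrm}, and your handling of the trivial components and of the positivity of the common value $c$ is the right bookkeeping. Nothing is missing.
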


In the special case where $K$ is \emph{finite}, the equivalence of (i) and (ii) above can be deduced from \cite[Theorem 6.18]{VW2}. 

We mention just one of a myriad applications for Corollary \ref{cor:compFrm}. An action of a group $G$ on a set $X$ is called \emph{2-transitive} when the following holds: for every two pairs $(x,y), (w,z) \in X \times X$ with $x \neq y$ and $w \neq z$, there is a single group element $g \in G$ with $g\cdot x = w$ and $g\cdot y = z$.

\begin{cor} \label{cor:2tranFrm}
Let $G$ be a finite group acting on a finite set $X$ with an action that is $2$-transitive. Fix a nonzero vector $f =(f_x)_{x\in X} \in \ell^2(X)$. Then $\{ (f_{g\cdot x})_{x\in X} : g \in G\}$ is a tight frame for $\ell^2(X)$ if and only if
\begin{equation} \label{eq:2tranFrm}
\left| \sum_{x\in X} f_x \right|^2 = \sum_{x\in X} | f_x |^2.
\end{equation}
\end{cor}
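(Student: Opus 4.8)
The plan is to realize the permutation action of $G$ on $X$ as a unitary representation $\rho$ on $\ell^2(X)$, where $\rho(g)$ sends the standard basis vector $\delta_x$ to $\delta_{g\cdot x}$, so that $(\rho(g) f)_x = f_{g^{-1}\cdot x}$. After relabeling $g \mapsto g^{-1}$, the orbit $\{(f_{g\cdot x})_{x\in X} : g \in G\}$ is precisely the $G$-orbit $\{\rho(g) f\}_{g\in G}$ in the sense of Corollary \ref{cor:compFrm}. The strategy is to apply Corollary \ref{cor:multFrFrm}, so first I must check that $\rho$ is multiplicity free, and then identify the isotypical components explicitly enough to evaluate the quantities $\Norm{P_\pi f}^2/d_\pi$.

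**Using 2-transitivity to pin down the decomposition.** The key structural input is a classical fact: the permutation representation of a finite group on a $2$-transitive action decomposes as $\rho \cong \mathbf{1} \oplus \sigma$, where $\mathbf{1}$ is the trivial representation and $\sigma$ is an irreducible (the so-called \emph{standard} representation). This follows from the well-known computation that, for a transitive action, the multiplicity of the trivial representation in $\rho$ is $1$ and $\langle \chi_\rho, \chi_\rho\rangle$ equals the number of orbits of $G$ on $X\times X$; $2$-transitivity makes this inner product equal to $2$, forcing exactly two irreducible constituents, each with multiplicity one. In particular $\rho$ is multiplicity free, with $E$ consisting of (the classes of) $\mathbf{1}$ and $\sigma$, and $\H_\rho = \mathcal{M}_{\mathbf 1} \oplus \mathcal{M}_\sigma$, where $\mathcal{M}_{\mathbf 1}$ is the one-dimensional space of constant vectors $\C\cdot(1,\dotsc,1)$ and $\mathcal{M}_\sigma$ is its orthogonal complement, the mean-zero vectors. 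Thus $d_{\mathbf 1} = \dim\mathcal{M}_{\mathbf 1} = 1$ and $d_\sigma = \dim\mathcal{M}_\sigma = \card(X)-1$.

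**Computing the component norms.** With $P_{\mathbf 1}$ the orthogonal projection onto constants, I compute $P_{\mathbf 1} f = \frac{1}{\card(X)}\bigl(\sum_{x} f_x\bigr)(1,\dotsc,1)$, whence
\[ \Norm{P_{\mathbf 1} f}^2 = \frac{1}{\card(X)}\left| \sum_{x\in X} f_x \right|^2. \]
Since $P_\sigma f = f - P_{\mathbf 1} f$ and the decomposition is orthogonal, $\Norm{P_\sigma f}^2 = \Norm{f}^2 - \Norm{P_{\mathbf 1}f}^2 = \sum_x |f_x|^2 - \frac{1}{\card(X)}\bigl|\sum_x f_x\bigr|^2$. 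Now Corollary \ref{cor:multFrFrm} says the orbit is tight exactly when $\Norm{P_{\mathbf 1}f}^2/d_{\mathbf 1} = \Norm{P_\sigma f}^2/d_\sigma$, i.e.
\[ \frac{1}{\card(X)}\left|\sum_x f_x\right|^2 = \frac{1}{\card(X)-1}\left( \sum_x |f_x|^2 - \frac{1}{\card(X)}\left|\sum_x f_x\right|^2 \right). \]
A short rearrangement (clearing denominators and collecting the $\bigl|\sum f_x\bigr|^2$ terms) collapses this to exactly $\bigl|\sum_x f_x\bigr|^2 = \sum_x |f_x|^2$, which is \eqref{eq:2tranFrm}. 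One must also dispatch the edge cases: if $f$ lies entirely in one isotypical component then $E$ effectively shrinks and tightness is automatic, but these are consistent with \eqref{eq:2tranFrm} (e.g.\ a mean-zero $f$ gives $\sum_x f_x = 0$, and the identity then reads $0 = \Norm{P_\sigma f}^2/d_\sigma$ only in the degenerate $f=0$ case, which is excluded).

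**Main obstacle.** The only genuinely nonroutine step is establishing the $\mathbf 1 \oplus \sigma$ decomposition with $\sigma$ irreducible — that is, translating $2$-transitivity into multiplicity-freeness. Everything downstream is the linear algebra of orthogonal projections onto the constants and their complement, which is elementary. I would either cite the standard character-theoretic orbit-counting lemma or, to keep the argument self-contained, verify directly that $2$-transitivity implies the mean-zero subspace $\mathcal{M}_\sigma$ is $G$-irreducible and inequivalent to the trivial summand, so that $\rho$ is multiplicity free and Corollary \ref{cor:multFrFrm} applies.
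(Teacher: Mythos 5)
Your proposal is correct and follows essentially the same route as the paper: realize the action as the permutation representation, invoke $2$-transitivity to get the multiplicity-free decomposition into constants and mean-zero vectors (the paper simply cites \cite[Corollary~29.10]{JL} where you sketch the character-theoretic argument), compute $\Norm{P_1 f}^2$ and $\Norm{P_2 f}^2$, and apply Corollary~\ref{cor:multFrFrm}. The only omission is the degenerate case where $X$ is a singleton (so that $d_\sigma = \card(X)-1 = 0$), which the paper dispatches separately before assuming $\card(X) > 1$.
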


\begin{proof}
The statement is trivial when $X$ is a singleton, so we may assume that $X$ has more than one point. Let $\rho$ be the unitary representation of $G$ on $\ell^2(X)$ associated with the action of $G$. Namely, for $g \in G$ and $\psi = (\psi_x)_{x\in X} \in \ell^2(X)$, we define $\rho(g)\psi = (\psi_{g^{-1}\cdot x})_{x\in X}$. By \cite[Corollary 29.10]{JL}, $\rho$ is multiplicity free with two isotypical components,
\[ \mathcal{M}_1 = \left\{ (\psi_x)_{x\in X} \in \ell^2(X) : \psi_x = \psi_y \text{ for all }x,y \in X \right\} \]
and
\[ \mathcal{M}_2 = \left\{ (\psi_x)_{x\in X} \in \ell^2(X) : \sum_{x\in X} \psi_x = 0 \right\}. \]
Let $P_j$ be orthogonal projection of $\ell^2(X)$ onto $\mathcal{M}_j$, for $j=1,2$. If we denote 
\[ \overline{f} = \frac{1}{|X|} \sum_{x\in X} f_x, \]
then $P_1 f = (\overline{f})_{x\in X}$, and $P_2 f = (f_x - \overline{f})_{x\in X}$. In particular,
\[ \Norm{P_1 f}^2 = \frac{1}{|X|} \left| \sum_{x\in X} f_x \right|^2. \]
By Corollary \ref{cor:multFrFrm}, the orbit of $f$ under $\rho$ is a tight frame for $\ell^2(X)$ if and only if $\Norm{P_1 f}^2 = \Norm{P_2 f}^2 / (|X| - 1)$, if and only if $|X|\cdot \Norm{ P_1 f}^2 = \Norm{P_2 f}^2 + \Norm{P_1 f}^2 = \Norm{f}^2$, if and only if
\[ \left| \sum_{x \in X} f_x \right|^2 = \sum_{x\in X} | f_x |^2. \qedhere \]
\end{proof}

The proof indicates a simple and universal method for constructing the generating vector $f$. Let $\varphi \in \ell^2(X)$ be the all-ones vector. Fix any nonzero vector $\psi \in \ell^2(X)$ with $\sum_{x\in X} \psi_x = 0$, and scale it so that $\Norm{\psi}^2 = |X|^2 - |X|$. Then $f = \varphi + \psi$ generates a tight frame for $\ell^2(X)$, by Corollary \ref{cor:multFrFrm}. Up to scaling, every vector satisfying \eqref{eq:2tranFrm} is produced in this way.

\begin{example} \label{ex:PermFrm}
The action of the symmetric group $S_n$ on the set with $n$ elements is $2$-transitive. Thus, Corollary \ref{cor:2tranFrm} and the comment above explain how to make a unit norm tight frame of $n!$ vectors in $\C^n$ just by permuting the entries of a single vector.
\end{example}

\medskip

%GROUP FRAMES WITH MULTIPLE GENERATORS=====================================================
\section{Group frames with multiple generators} \label{sec:multGen}

The last two sections focused on frames generated by a single vector $f\in \H_\rho$. We now consider frames with \emph{multiple} generators. For a countable family $\A = \{f_j\}_{j\in I} \subset \H_\rho$, this means that we will determine precise (and simple) conditions under which the orbit $\{ \rho(\xi) f_j\}_{j\in I, \xi \in K}$ forms a continuous frame for $\H_\rho$. In the course of doing so, we will classify the invariant subspaces of $\H_\rho$ in terms of range functions. 

Despite significant interest in the problem, very little has been done in the area of group frames with multiple generators. The most fruitful area has been frames generated by translations, mostly with abelian groups \cite{BHP2,B,BR,CP,I,KR} but in at least one case with nonabelian \cite{CMO}. In the setting of discrete nonabelian groups, Hern{\'a}ndez and his collaborators \cite{BHP3} have recently developed an abstract machinery to handle frames with multiple generators for a special class of unitary representations. For finite groups and \emph{tight} frames, Vale and Waldron \cite{VW3} recently broke through the single generator barrier, with a neat condition in terms of norms and orthogonality of the generating vectors. These few papers provide the state of the art. 

Our main result is a duality theorem unifying the work of Vale and Waldron with classical duality of frames and Riesz sequences, simultaneously extending their results to non-tight frames and actions by compact groups. Here we pull ahead of the abelian setting. As far as the author knows, there is nothing of this kind in the literature for LCA groups. Once again, we hope that by illuminating the situation for nonabelian compact groups, we can set a path for further research on representations of general locally compact groups.

\medskip

Our notation and assumptions are as follows. Let $K$ and $\rho$ be as in the previous sections. Since $K$ is compact, it is always possible to decompose $\H_\rho$ as a direct sum of irreducible invariant subspaces. Our main assumption is that \emph{this has already been done}. For $\pi \in \hat{K}$, we let $m_\pi = \mult(\pi,\rho)$. We write $\pi^{\oplus m_\pi}$ for the direct sum of $m_\pi$ copies of $\pi$, which acts on $\H_\pi^{\oplus m_\pi}$. Without loss of generalty, we may assume that
\[ \rho = \bigoplus_{\pi \in \hat{K}} \pi^{\oplus m_\pi}, \]
and that
\[ \H_\rho = \bigoplus_{\pi \in \hat{K}} \H_\pi^{\oplus m_\pi}. \]
We warn that some of the multiplicities $m_\pi$ may be infinite, but since $\H_\rho$ is separable, they must all be countable.

Fix the following notation. Let $\A = \{f_j\}_{j\in I} \subset \H_\rho$ be a countable family of vectors. We write $f_j = (f_j^\pi)_{\pi \in \hat{K}} \in \H_\rho$, with $f_j^\pi = (f_{i,j}^\pi)_{i=1}^{m_\pi} \in \H_\pi^{\oplus m_\pi}$. We also denote
\[ E(\A) = \{ \rho(\xi) f_j\}_{j \in I, \xi \in K} \]
for the orbit of $\A$ under $\rho$. Formally, $E(\A)$ should be interpreted as a set with multiplicities, or more accurately, as a mapping $I \times K \to \H_\rho$. Finally, we let
\[ S(\A) = \overline{\spn}\{ \rho(\xi) f_j : j \in I, \xi \in K\} \]
be the invariant subspace generated by $\A$.

Our notation is meant to suggest that $\A$ is a kind of matrix. For each $\pi \in \hat{K}$, we define
\[ \A(\pi) = ( f_{i,j}^\pi )_{1 \leq i \leq m_\pi, j \in I}, \]
which is a (possibly infinite) matrix with entries in $\H_\pi$. The number of rows equals $m_\pi$, and the number of columns equals $\card(\A)$. For instance, if $\A$ were finite with $I = \{1,\dotsc, N\}$, we would have
\[ \A(\pi) = \begin{pmatrix}
| & | & \dots & | \\
f_1^\pi & f_2^\pi & \dotso & f_N^\pi \\
| & | & \dots & | 
\end{pmatrix}. \] 
If we now imagine the matrices $\A(\pi)$ stacked vertically, then the $j$-th column of the resulting ``matrix'' precisely describes the direct sum decomposition of $f_j \in \A$.

We remind the reader that a \emph{Riesz sequence} in a Hilbert space $\H$ is a sequence of vectors $\{ f_i \}_{i \in J} \subset \H$ for which there are constants $0 < A \leq B < \infty$ such that, whenever $(c_i) \in \ell^2(J)$ has finite support,
\[ A \sum_{i \in J} | c_i |^2 \leq \Norm{ \sum_{i \in J} c_i f_i }^2 \leq B \sum_{i \in J} |c_i|^2. \]
Once this inequality holds for those $(c_i) \in \ell^2(J)$ with finite support, it automatically holds for arbitrary $(c_i) \in \ell^2(J)$. Our main result, below, says that the frame properties of the orbit of the ``columns''  of $\A$ can be read from the Riesz properties of the \emph{rows}.

\begin{theorem} \label{thm:multFrm2}
The following are equivalent for constants $A$ and $B$ with $0 < A \leq B < \infty$.
\begin{enumerate}[(i)]
\item The orbit $E(\A) = \{\rho(\xi) f_j\}_{j \in I, \xi \in K}$ is a continuous frame for $\H_\rho$ with bounds $A,B$.
\item For every $\pi \in \hat{K}$, the rows of $\A(\pi)$ belong to $\H_\pi^{\oplus I}$, where they form a Riesz sequence with bounds $d_\pi A,d_\pi B$.
\end{enumerate}
\end{theorem}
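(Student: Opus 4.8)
The plan is to pass to the Fourier side via Plancherel, decouple the frame inequality across the isotypical components using Schur orthogonality, and then recognize the resulting per-component inequality as a Riesz condition on the rows.

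First I would record the basic reduction. For $g,f \in \H_\rho$, Plancherel's Theorem gives $\int_K |\langle g, \rho(\xi) f\rangle|^2\, d\xi = \Norm{V_f g}_{L^2(K)}^2$, so the frame expression in (i) becomes $\sum_{j} \Norm{V_{f_j} g}_{L^2(K)}^2$. Writing $g = (g^\pi)_\pi$ and $f_j = (f_j^\pi)_\pi$ with respect to the fixed decomposition $\H_\rho = \bigoplus_\pi \H_\pi^{\oplus m_\pi}$, the matrix element splits as $\langle g, \rho(\xi) f_j\rangle = \sum_\pi \sum_{i} \langle g_i^\pi, \pi(\xi) f_{i,j}^\pi\rangle$. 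Each inner summand is the conjugate of a matrix coefficient of $\pi$, so the $\pi$-block and the $\sigma$-block are orthogonal in $L^2(K)$ whenever $\pi \neq \sigma$, by the Schur orthogonality relations and Peter--Weyl. Consequently $\int_K |\langle g, \rho(\xi) f_j\rangle|^2\, d\xi$ splits as a sum over $\pi$ with no cross terms, and after summing over $j$ (all terms being nonnegative, so Tonelli applies) the whole frame expression decouples as $\sum_\pi \Phi_\pi(g^\pi)$, where $\Phi_\pi$ depends only on the $\pi$-block. Since $\Norm{g}^2 = \sum_\pi \Norm{g^\pi}^2$ decouples in the same way and $g^\pi$ may be chosen freely in $\H_\pi^{\oplus m_\pi}$, the two-sided bound in (i) holds for all $g$ if and only if, for each $\pi$, one has $A \Norm{h}^2 \le \Phi_\pi(h) \le B\Norm{h}^2$ for all $h \in \H_\pi^{\oplus m_\pi}$.

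Next I would compute $\Phi_\pi$ explicitly using the orthogonality relation $\int_K \langle u_1, \pi(\xi) v_1\rangle \overline{\langle u_2, \pi(\xi) v_2\rangle}\, d\xi = d_\pi^{-1}\langle u_1, u_2\rangle \langle v_2, v_1\rangle$. Expanding $|\sum_i \langle h_i, \pi(\xi) f_{i,j}^\pi\rangle|^2$, integrating term by term, and summing over $j$ produces $\Phi_\pi(h) = d_\pi^{-1} \sum_{i,i'} \langle h_i, h_{i'}\rangle \langle R_{i'}^\pi, R_i^\pi\rangle$, where $R_i^\pi = (f_{i,j}^\pi)_{j \in I}$ is the $i$-th row of $\A(\pi)$, viewed in $\H_\pi^{\oplus I}$. (The upper bound in (i), applied to an $h$ with a single nonzero entry, already forces $\Norm{R_i^\pi}^2 \le d_\pi B < \infty$, so the rows indeed lie in $\H_\pi^{\oplus I}$; this is the well-definedness asserted in (ii).) Finally, fixing an orthonormal basis $e_1^\pi, \dotsc, e_{d_\pi}^\pi$ of $\H_\pi$ and writing $h_i = \sum_a h_{i,a}\, e_a^\pi$, a short rearrangement gives $\Phi_\pi(h) = d_\pi^{-1} \sum_{a=1}^{d_\pi} \Norm{\sum_i \overline{h_{i,a}}\, R_i^\pi}^2$ while $\Norm{h}^2 = \sum_a \sum_i |h_{i,a}|^2$.

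With this identity in hand the equivalence is immediate: the per-$\pi$ inequality $A\Norm{h}^2 \le \Phi_\pi(h) \le B\Norm{h}^2$ for all $h$ is, by concentrating on one index $a$ (forward) and summing over $a$ (backward), equivalent to $d_\pi A \Norm{w}^2 \le \Norm{\sum_i w_i R_i^\pi}^2 \le d_\pi B \Norm{w}^2$ for every square-summable scalar sequence $w = (w_i)$ --- that is, to the statement that the rows $\{R_i^\pi\}_i$ form a Riesz sequence in $\H_\pi^{\oplus I}$ with bounds $d_\pi A, d_\pi B$. (The conjugation $h_{i,a} \mapsto \overline{h_{i,a}}$ is harmless, since $w \mapsto \overline{w}$ is a norm-preserving bijection of $\ell^2$, and the Bessel bound $B$ guarantees that $\sum_i w_i R_i^\pi$ converges even when there are infinitely many rows.) Assembling over all $\pi$ turns the decoupled form of (i) into (ii). I expect the main technical care to lie in the bookkeeping of the Schur orthogonality relations --- in particular tracking the inner-product slots and the complex conjugate that converts the operator-level frame condition into the scalar Riesz condition --- and in justifying the interchanges of summation when some multiplicity $m_\pi$ or the index set $I$ is infinite, both of which are handled by nonnegativity together with the a priori bound $B$.
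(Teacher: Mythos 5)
Your proposal is correct and follows essentially the same route as the paper: both decouple the frame sum over $\hat{K}$ using orthogonality of matrix coefficients and arrive at the key identity $\sum_{j}\int_K|\langle g,\rho(\xi)f_j\rangle|^2\,d\xi=\sum_{\pi}d_\pi^{-1}\sum_{a=1}^{d_\pi}\sum_{j}\Norm{\sum_i\langle e_a^\pi,g_i^\pi\rangle f_{i,j}^\pi}^2$, from which the Riesz condition on the rows is read off by testing on vectors concentrated in a single component and basis direction. The paper packages this as the more general Theorem \ref{thm:multFrm} for arbitrary range functions and handles membership of the rows in $\H_\pi^{\oplus I}$ separately in Corollary \ref{cor:rowSum}, but the underlying computation is the one you give.
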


This will actually be a corollary of a more general theorem. Theorem \ref{thm:multFrm} (infra) gives conditions for $E(\A)$ to form a continuous frame for a general invariant subspace of $\H_\rho$.

\begin{example}
Here are four special cases of Theorem \ref{thm:multFrm2}.

\smallskip

(1) When $K$ is the trivial group and $\rho$ is the trivial action of $K$ on $\C$, we recover the usual duality theorem for frames and Riesz sequences, which says that the columns of a matrix $M \in M_{m,n}(\C)$ form a frame for $\C^m$ if and only if the rows of $M$ form a Riesz sequence in $\C^n$. Moreover, the bounds of the frame and the Riesz sequence are the same.

\smallskip

(2) When $\A$ has a single vector $f$ and $\rho$ is irreducible, there is only one matrix $\A(\pi)$ to consider, namely $\A(\rho) = (f)$. Obviously its rows form a Riesz sequence with upper and lower bounds both equal to $\Norm{f}^2$, so the orbit $\{ \rho(\xi) f \}_{\xi \in K}$ is a tight frame for $\H_\rho$ with bound $\Norm{f}^2/(\dim \H_\rho)$. This is the conclusion of Example \ref{ex:irredFrm}.

\smallskip

(3) More generally, when $\rho$ is multiplicity free, we can easily recover Corollary \ref{cor:multFrFrm}.

\smallskip

(4) Taking $A=B$ in Theorem \ref{thm:multFrm2}, we see that $E(\A)$ is a tight frame for $\H_\rho$ with bound $A$ if and only if the rows of each matrix $\A(\pi)$ form an orthogonal sequence of vectors in $\H_\pi^{\oplus I}$, with each vector's norm equal to $\sqrt{d_\pi A}$. That is,
\[ \sum_{j \in I} \langle f_{i_1,j}^\pi, f_{i_2,j}^\pi \rangle = \delta_{i_1,i_2}\cdot d_\pi A. \]
In the case where $K$ and $\A$ are both \emph{finite}, this is a result of Vale and Waldron \cite[Theorem 2.8]{VW3}.
\end{example}

\begin{rem}
Neither the group $K$ nor the representation $\rho$ play a prominent role in condition (ii) of Theorem \ref{thm:multFrm2}, except to provide conditions on the direct sum decomposition $\H_\rho = \bigoplus_{\pi \in \hat{K}} \H_\pi^{\oplus m_\pi}$. Suppose, then, that $G$ is \emph{another} compact group acting on $\H_\rho$ with a representation $\eta$ that admits the same decomposition of $\H_\rho$ as a direct sum of irreducible invariant subspaces. Then the orbit of $\A$ under the action of $\rho$ is a frame for $\H_\rho$ if and only if the orbit under the action of $\eta$ is, too. Moreover, the frame bounds are the same in both cases.

While this may seem surprising at first, it is really an extension of a well-known phenomenon. After all, any nonzero vector $f\in \H_\rho$ generates a tight frame when $\rho$ acts irreducibly, and this mild condition $(f\neq 0)$ has nothing to do with $K$ or the particular irreducible representation $\rho$. As we have seen, this is a special case of Theorem \ref{thm:multFrm2}.
\end{rem}

\medskip

\subsection{Classification of invariant subspaces}

From a technical perspective, we can always find an encompassing group $G \supset K$ for which $\H_\rho$ embeds into $L^2(G)$ as a $K$-invariant subspace, with $\rho$ turning into left translation. (See Theorem \ref{thm:isom}.) In this sense, Theorem \ref{thm:tranFrm} on frames generated by translations already gives a complete characterization of group frames with multiple generators. In practice, however, it may be tedious to unravel this characterization through the embedding $\H_\rho \to L^2(G)$. Instead of following that route, we will now try to recreate the program of  Sections \ref{sec:Zak}--\ref{sec:tranFrm} from scratch. Namely, we will give a range function characterization of the invariant subspaces of $\H_\rho$, and then we will use that characterization to deduce Theorem \ref{thm:multFrm2}.

To begin our program, we need a substitute for the Zak transform. Fix $\pi \in \hat{K}$, and associate each sequence $\Phi = {(\phi_i)_{i=1}^{m_\pi}} \in {\H_\pi^{\oplus m_\pi}}$ with its analysis operator $T_\pi \Phi \colon \H_\pi \to \ell^2_{m_\pi}$, which is given by
\[ [ T_\pi \Phi ](\psi) = ( \langle \psi, \phi_i \rangle )_{i=1}^{m_\pi} \qquad (\psi \in \H_\pi). \]
Then $T_\pi \colon \H_\pi^{\oplus m_\pi} \to \HS( \H_\pi, \ell^2_{ m_\pi } )$ is a \emph{conjugate}-linear unitary. To see this, consider the composition of isomorphisms
\[ \H_\pi^{\oplus m_\pi} \cong \H_\pi \otimes \ell^2_{m_\pi} \cong \HS(\H_\pi, \ell^2_{m_\pi} ), \]
the last of which is conjugate linear (see \cite[Section 7.3]{F}). Letting $\pi$ run through $\hat{K}$, we obtain a conjugate-linear unitary
\[ T \colon \H_\rho \to \bigoplus_{\pi \in \hat{K}} \HS( \H_\pi, \ell^2_{m_\pi} ) \]
given by
\[ T (g_\pi)_{\pi \in \hat{K}} = ( T_\pi g_\pi )_{\pi \in \hat{K}} \qquad ( (g_\pi)_{\pi \in \hat{K}} \in \bigoplus_{\pi \in \hat{K}} \H_\pi^{\oplus m_\pi}  = \H_\rho ). \]
If we write $g_\pi = (g_i^\pi)_{i=1}^{m_\pi} \in \H_\pi^{\oplus m_\pi}$, then the simple formula $\langle \phi, \pi(\xi) g_i^\pi \rangle = \langle \pi(\xi^{-1}) \phi, g_i^\pi \rangle$ gives the key identity
\begin{equation}\label{eq:TRho}
(T \rho(\xi) g)(\pi) =  (Tg)(\pi)\cdot \pi(\xi^{-1}) \qquad (g \in \H_\rho,\ \xi \in K,\ \pi \in \hat{K}).
\end{equation}
This will serve as our substitute for the Zak transform's translation property \eqref{eq:ZakTrans}.

\medskip

A careful reading of Section \ref{sec:ranTran} shows that we used only two properties of the Zak transform: the translation property \eqref{eq:ZakTrans}, and the fact that $Z$ is unitary. In the current setting, we can therefore leverage the intertwining property \eqref{eq:TRho} to classify invariant subspaces of $\H_\rho$ in terms of range functions. Let $J$ be a range function in $\{ \ell^2_{m_\pi} \}_{\pi \in \hat{K}}$, and let 
\[ V_J = \{ (g_\pi)_{\pi \in \hat{K}} \in \H_\rho : \text{for each $\pi \in \hat{K}$, } \ran T_\pi g_\pi \subset J(\pi) \}. \]
Equivalently,
\[ T V_J = \bigoplus_{\pi \in \hat{K}} \HS(\H_\pi, J(\pi) ). \]
By \eqref{eq:TRho}, $V_J$ is an invariant subspace of $\H_\rho$. In fact, a trivial modification of the proof of Theorem \ref{thm:ranTran} shows that every invariant subspace of $\H_\rho$ takes this form. Explicitly, we have the following.

\begin{theorem}
The mapping $J \mapsto V_J$ is a bijection between range functions in $\{ \ell^2_{m_\pi} \}_{\pi \in \hat{K}}$ and invariant subspaces of $\H_\rho$.
\end{theorem}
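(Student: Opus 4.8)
The plan is to run the proof of Theorem \ref{thm:ranTran} essentially verbatim, substituting the conjugate-linear unitary $T$ for the Zak transform $Z$ and using the intertwining identity \eqref{eq:TRho} in place of the translation property \eqref{eq:ZakTrans}. The conjugate-linearity of $T$ is harmless: since each $V \cap M_\pi$ is a complex-linear subspace and $T_\pi(\lambda\Phi) = \overline{\lambda}\,T_\pi\Phi$, the image $T_\pi(V \cap M_\pi)$ is again a complex-linear closed subspace, so all of the linear algebra goes through unchanged. Injectivity is immediate: because $T$ is unitary with $TV_J = \bigoplus_{\pi\in\hat{K}} \HS(\H_\pi, J(\pi))$, and because $J(\pi)$ is recovered from $\HS(\H_\pi, J(\pi))$ as the closed span of the ranges of its elements (here $\H_\pi \neq 0$), distinct range functions yield distinct subspaces.

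For surjectivity, let $V \subset \H_\rho$ be an arbitrary invariant subspace. The isotypical decomposition is transparent in the present setting: since $\rho = \bigoplus_\pi \pi^{\oplus m_\pi}$ is already written in isotypical form, the isotypical component of $\pi$ is simply $M_\pi := \H_\pi^{\oplus m_\pi}$, characterized by $T$ as $\{g \in \H_\rho : (Tg)(\sigma) = 0 \text{ for } \sigma \neq \pi\}$. (There is no contragredient twist here, unlike in the proof of Theorem \ref{thm:ranTran}, because we work with the abstract representation directly rather than with left translation.) By Lemma \ref{lem:isoCom}(ii), $V = \bigoplus_\pi (V \cap M_\pi)$. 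For each $\pi$, view $W_\pi := T(V \cap M_\pi)$ as a closed subspace of the summand $\HS(\H_\pi, \ell^2_{m_\pi})$, and set
\[ J(\pi) = \overline{\spn}\{\ran A : A \in W_\pi\} \subset \ell^2_{m_\pi}. \]
Plainly $W_\pi \subset \HS(\H_\pi, J(\pi))$, and the proof reduces, exactly as before, to upgrading this inclusion to equality; once that is done, $TV = \bigoplus_\pi W_\pi = \bigoplus_\pi \HS(\H_\pi, J(\pi)) = TV_J$, so $V = V_J$.

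The one step with genuine content --- and the main obstacle, though it is essentially identical to its counterpart in Theorem \ref{thm:ranTran} --- is proving $\HS(\H_\pi, J(\pi)) \subset W_\pi$. The key observation is that \eqref{eq:TRho} forces $W_\pi$ to be invariant under right multiplication by $\pi(\xi^{-1})$ for every $\xi \in K$ (because $V \cap M_\pi$ is $\rho$-invariant), hence, by Lemma \ref{lem:repSpn}, invariant under right multiplication by all of $B(\H_\pi) = \spn\{\pi(\xi^{-1}) : \xi \in K\}$. Given $A \in \HS(\H_\pi, J(\pi))$, I would decompose $A$ as a sum of operators whose kernels have codimension one --- this uses only that the domain $\H_\pi$ is finite-dimensional --- and reduce to the case where $Av = 0$ for all $v$ orthogonal to some unit vector $u$. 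Approximating $Au$ within $J(\pi)$ by a finite sum $\sum_j B_j v_j$ with $B_j \in W_\pi$, and precomposing each $B_j$ with the rank-one map $v \mapsto \langle v, u\rangle v_j \in B(\H_\pi)$, produces $B \in W_\pi$ with $Bu = \sum_j B_j v_j$ and $B$ vanishing on $u^\perp$; then $\Norm{A - B}$ is controlled by $\Norm{Au - Bu}$ (up to the $d_\pi$ factor in the inner product), and closedness of $W_\pi$ gives $A \in W_\pi$. The only points needing a moment's care are the conjugate-linearity of $T$ and the replacement of $B(\H_\pi, \cdot)$ by $\HS(\H_\pi, \cdot)$; but since $\H_\pi$ is finite-dimensional these two spaces coincide, so nothing is lost.
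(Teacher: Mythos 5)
Your proposal is correct and follows exactly the route the paper intends: the paper itself proves this theorem only by remarking that it is "a trivial modification of the proof of Theorem \ref{thm:ranTran}," and you have carried out that modification faithfully, flagging the right points of care (conjugate-linearity of $T$, the absence of the contragredient twist in identifying the isotypical components, and the coincidence of $B(\H_\pi,\cdot)$ with $\HS(\H_\pi,\cdot)$ for finite-dimensional $\H_\pi$). The only cosmetic quibble is that the norm on $\HS(\H_\pi,\ell^2_{m_\pi})$ here is the unscaled Hilbert--Schmidt norm, so no $d_\pi$ factor actually appears in the final estimate; this does not affect the argument.
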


In further analogy with the range function analysis of Section \ref{sec:ranTran}, it is easy to see that the correspondence $J \mapsto V_J$ preserves direct sum decompositions. This leads to the following analogue of Theorem \ref{thm:ranDecomp}.

\begin{theorem} \label{thm:ranDecompGen}
Let $J$ be a range function in $\{ \ell^2_{m_\pi}\}_{\pi \in \hat{K}}$. Choose an orthonormal basis $\{e_i^\pi\}_{i\in I_\pi}$ for each $J(\pi)$, $\pi \in \hat{K}$. For each $\pi \in \hat{K}$ and $i \in I_\pi$, let $V_{\pi,i}$ be the space of $(g_\sigma)_{\sigma \in \hat{K}} \in \H_\rho$ such that $g_\sigma = 0$ for $\sigma \neq \pi$, and such that $g_\pi = (g_j^\pi)_{j=1}^{m_\pi}$ satisfies $(\langle \phi,g_j^\pi\rangle )_{j=1}^{m_\pi} = c_\phi e_i^\pi$ for every $\phi \in \H_\pi$, where $c_\phi$ is a scalar. Then $V_{\pi,i}$ is an irreducible invariant subspace of $\H_\rho$, and
\[ V_J = \bigoplus_{\pi \in \hat{K}} \bigoplus_{i\in I_\pi} V_{\pi,i}. \]
Moreover, every decomposition of $V_J$ as a direct sum of irreducible subspaces occurs in this way.
\end{theorem}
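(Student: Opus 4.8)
The plan is to transcribe the proof of Theorem~\ref{thm:ranDecomp} into the present setting, with the conjugate-linear unitary $T$ and its intertwining property \eqref{eq:TRho} playing the roles of the Zak transform $Z$ and \eqref{eq:ZakTrans}. First I would record the two structural facts that make that proof run: that $J\mapsto V_J$ carries orthogonal range functions to orthogonal subspaces and respects arbitrary direct sums (the analogues of \eqref{eq:ranPerp} and \eqref{eq:ranSum}), and that it obeys a multiplicity formula. Both follow from unitarity of $T$ together with $TV_J=\bigoplus_\pi \HS(\H_\pi,J(\pi))$, exactly as in Section~\ref{sec:ranTran}; the direct-sum statement is the one already asserted in the text. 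For the multiplicity formula, the piece of $V_J$ supported at a single $\pi\in\hat K$ is $V_J\cap\H_\pi^{\oplus m_\pi}$, which corresponds under $T$ to $\HS(\H_\pi,J(\pi))$, of dimension $d_\pi\dim J(\pi)$; since $\H_\pi^{\oplus m_\pi}$ is the $\pi$-isotypical component of $\rho$, Lemma~\ref{lem:isoCom}(ii) and \eqref{eq:multDimIso} give $\mult(\pi,\rho^{V_J})=\dim J(\pi)$, the analogue of \eqref{eq:multRan}.

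Second I would identify each $V_{\pi,i}$ with $V_{J_{\pi,i}}$, where $J_{\pi,i}$ equals $\spn\{e_i^\pi\}$ at $\pi$ and $\{0\}$ elsewhere. This is immediate from the definitions: since $[T_\pi g_\pi](\phi)=(\langle\phi,g_j^\pi\rangle)_{j=1}^{m_\pi}$, the condition defining $V_{\pi,i}$ is precisely $\ran T_\pi g_\pi\subset\spn\{e_i^\pi\}$. As $\{e_i^\pi\}_{i\in I_\pi}$ is an orthonormal basis of $J(\pi)$, we have $J=\bigoplus_{\pi}\bigoplus_{i\in I_\pi}J_{\pi,i}$, and the direct-sum property then yields $V_J=\bigoplus_{\pi}\bigoplus_{i\in I_\pi}V_{\pi,i}$. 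The multiplicity formula shows $\rho^{V_{\pi,i}}$ contains $\pi$ once and nothing else, so each $V_{\pi,i}$ is irreducible.

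Third, for the converse, given any decomposition $V_J=\bigoplus_{\alpha\in A}V_\alpha$ into irreducible invariant subspaces, I would write $V_\alpha=V_{J_\alpha}$ and apply the multiplicity formula: irreducibility of $\rho^{V_\alpha}$ forces $J_\alpha(\pi)$ to be one-dimensional for exactly one $\pi\in\hat K$ and trivial otherwise. Choosing a unit vector $G_\alpha$ in that one-dimensional space and invoking the direct-sum property once more gives $J=\bigoplus_\alpha J_\alpha$, whence $\{G_\alpha:J_\alpha(\pi)\neq\{0\}\}$ is an orthonormal basis of $J(\pi)$ for each $\pi$. Regrouping the sum by $\pi$ exhibits the given decomposition in the stated form, following Theorem~\ref{thm:ranDecomp} line for line.

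The one genuine subtlety, and where I would spend the most care, is the conjugate-linearity of $T$. It leaves orthogonality, unitarity, and the preservation of direct sums untouched, so the lattice bookkeeping above is unaffected. Its role is to make the indexing come out correctly: whereas the linear Zak transform of Theorem~\ref{thm:ranDecomp} matches the $\pi$-coordinate block $B(\H_\pi,\spn\{F_i^\pi\})$ with $\overline\pi$ (forcing the $\overline\pi$ in \eqref{eq:multRan}), the conjugate-linearity of $T$ cancels the contragredient, so that $\HS(\H_\pi,\spn\{e_i^\pi\})$ carries $\pi$ itself — as it must, since by our standing assumption this block sits inside the $\pi$-isotypical component $\H_\pi^{\oplus m_\pi}$. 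Concretely, $A_w:=\langle\cdot,w\rangle e_i^\pi$ satisfies $A_w\,\pi(\xi^{-1})=A_{\pi(\xi)w}$, so the identification $w\mapsto T^{-1}A_w$, being the composite of two conjugate-linear maps, is a $\C$-linear unitary intertwining $\pi$ with $\rho^{V_{\pi,i}}$. This is exactly why the multiplicity formula reads $\dim J(\pi)$ rather than $\dim J(\overline\pi)$, and hence why the theorem is indexed by $\pi$ and not $\overline\pi$.
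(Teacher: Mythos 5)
Your proposal is correct and follows exactly the route the paper intends: the paper gives no separate proof of Theorem~\ref{thm:ranDecompGen}, stating only that the correspondence $J \mapsto V_J$ preserves direct sums and that the result is the analogue of Theorem~\ref{thm:ranDecomp}, which is precisely the transcription you carry out. Your extra care with the conjugate-linearity of $T$ --- verifying via $A_w\,\pi(\xi^{-1}) = A_{\pi(\xi)w}$ that the multiplicity formula reads $\dim J(\pi)$ rather than $\dim J(\overline{\pi})$ --- is a correct and worthwhile point that the paper leaves implicit.
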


When $J$ is the range function with $J(\pi) = \ell^2_{m_\pi}$ for every $\pi \in \hat{K}$, the theorem above describes every possible decomposition of $\H_\rho$ as a direct sum of irreducibles. Remember that our operating assumption is that we can find \emph{one} such decomposition. Thus, knowing one decomposition is enough to describe them all (and very simply, at that).

\subsection{Duality for frames with multiple generators}

Now we can prove our main theorem on group frames with multiple generators. Remember our interpretation of $\A$ as a kind of matrix, with the vectors $f_j \in \A$ appearing as the ``columns''. It turns out that the frame properties of the orbit of the ``columns'' of $\A$ can be read from a Riesz-like property on the \emph{rows}.

\begin{theorem} \label{thm:multFrm}
Let $J$ be a range function in $\{ \ell^2_{m_\pi} \}_{\pi \in \hat{K}}$, and assume that $\A \subset V_J$. For constants $A$ and $B$ with $0 < A \leq B < \infty$, the following are equivalent.
\begin{enumerate}[(i)]
\item $E(\A)$ is a continuous frame for $V_J$ with bounds $A,B$. That is,
\[ A \Norm{g}^2 \leq \sum_{j\in I} \int_K | \langle g, \rho(\xi) f_j \rangle |^2\, d\xi \leq B \Norm{g}^2 \qquad (g \in V_J). \]
\item For every $\pi \in \hat{K}$ and every sequence $(c_i)_{i=1}^{m_\pi} \in J(\pi) \subset \ell^2_{m_\pi}$,
\[ d_\pi A \sum_{i=1}^{m_\pi} |c_i|^2 \leq \sum_{j\in I} \Norm{ \sum_{i=1}^{m_\pi} c_i f_{i,j}^\pi }^2 \leq d_\pi B \sum_{i=1}^{m_\pi} |c_i|^2. \]
\end{enumerate}
\end{theorem}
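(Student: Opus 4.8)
The plan is to mimic the proof of Theorem~\ref{thm:tranFrm}, using the conjugate-linear unitary $T$ and its intertwining identity \eqref{eq:TRho} in place of the Zak transform and \eqref{eq:ZakTrans}. Write $G_\pi = (Tg)(\pi) \in \HS(\H_\pi,\ell^2_{m_\pi})$ for $g \in \H_\rho$, fix an orthonormal basis $e_1^\pi,\dots,e_{d_\pi}^\pi$ of each $\H_\pi$, and note that $(G_\pi\psi)_i = \langle \psi, g_i^\pi\rangle$, that $\Norm{g}^2 = \sum_\pi \sum_k \Norm{G_\pi e_k^\pi}^2$ since $T$ is unitary, and that $g \in V_J$ precisely when $\ran G_\pi \subset J(\pi)$ for all $\pi$. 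For each $\pi$ let $Q_\pi$ be the (positive, possibly $+\infty$-valued) quadratic form on $\ell^2_{m_\pi}$ given by $Q_\pi(c) = \sum_{j \in I}\Norm{\sum_i c_i f_{i,j}^\pi}^2$; then condition (ii) is exactly the assertion that $d_\pi A\Norm{c}^2 \le Q_\pi(c) \le d_\pi B\Norm{c}^2$ for every $\pi \in \hat K$ and every $c \in J(\pi)$.

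The technical heart is the \emph{fundamental calculation}: for $g \in V_J$,
\[ \sum_{j \in I}\int_K |\langle g,\rho(\xi)f_j\rangle|^2\,d\xi = \sum_{\pi \in \hat K}\frac{1}{d_\pi}\sum_{k=1}^{d_\pi} Q_\pi(G_\pi e_k^\pi). \]
I would prove this by expanding $\langle g,\rho(\xi)f_j\rangle = \sum_\pi \sum_i \langle g_i^\pi,\pi(\xi)f_{i,j}^\pi\rangle$ and observing that the $\pi$-th block lies in the $\pi$-isotypic subspace of $L^2(K)$, so distinct $\pi$ contribute orthogonally by Peter--Weyl. Within a fixed $\pi$ the Schur orthogonality relations give $\int_K \langle u,\pi(\xi)v\rangle\langle \pi(\xi)v',u'\rangle\,d\xi = d_\pi^{-1}\langle v',v\rangle\langle u,u'\rangle$, so that $\int_K \langle g_i^\pi,\pi(\xi)f_{i,j}^\pi\rangle\langle \pi(\xi)f_{i',j}^\pi,g_{i'}^\pi\rangle\,d\xi = d_\pi^{-1}\langle f_{i',j}^\pi,f_{i,j}^\pi\rangle\langle g_i^\pi,g_{i'}^\pi\rangle$; summing over $i,i',j$ and contracting with $\sum_k \langle e_k^\pi,g_i^\pi\rangle\overline{\langle e_k^\pi,g_{i'}^\pi\rangle} = \langle g_{i'}^\pi,g_i^\pi\rangle$ produces the displayed identity.

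Granting this, the two implications are short. For (ii) $\Rightarrow$ (i): since $g \in V_J$, each $G_\pi e_k^\pi$ lies in $J(\pi)$, so (ii) bounds $Q_\pi(G_\pi e_k^\pi)$ between $d_\pi A\Norm{G_\pi e_k^\pi}^2$ and $d_\pi B\Norm{G_\pi e_k^\pi}^2$; dividing by $d_\pi$, summing over $k$ and $\pi$, and invoking $\Norm{g}^2 = \sum_\pi\sum_k\Norm{G_\pi e_k^\pi}^2$ yields exactly the frame inequality of (i). For (i) $\Rightarrow$ (ii): fix $\pi_0$, a vector $c \in J(\pi_0)$, and a unit vector $e \in \H_{\pi_0}$. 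Because $T$ is a conjugate-linear unitary \emph{onto} $\bigoplus_\pi \HS(\H_\pi,\ell^2_{m_\pi})$, there is a $g \in \H_\rho$ with $(Tg)(\pi_0)$ equal to the rank-one operator $\psi \mapsto \langle \psi,e\rangle c$ and $(Tg)(\pi) = 0$ otherwise; its range is $\spn\{c\} \subset J(\pi_0)$, so $g \in V_J$, with $\Norm{g}^2 = \Norm{c}^2$ and $\sum_j\int_K|\langle g,\rho(\xi)f_j\rangle|^2\,d\xi = d_{\pi_0}^{-1}Q_{\pi_0}(c)$ by the fundamental calculation. Substituting into the frame inequality of (i) gives $d_{\pi_0}A\Norm{c}^2 \le Q_{\pi_0}(c) \le d_{\pi_0}B\Norm{c}^2$, which is (ii).

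The main obstacle is the fundamental calculation itself: correctly matching the $d_\pi^{-1}$ normalization coming out of Schur orthogonality against the $d_\pi A, d_\pi B$ weights in (ii), and justifying the interchange of the (nonnegative) sums over $\pi,i,i',j$ with the integral when the multiplicities $m_\pi$ may be infinite. The first is pure bookkeeping, and the interchange is legitimized by Tonelli together with the orthogonality of isotypic components; the a priori finiteness of $Q_\pi(c)$ needed for the rigorous manipulations is supplied by the upper (Bessel) bound built into both (i) and (ii), exactly as the analogous convergence in Lemma~\ref{lem:ZakBrack} underpins Theorem~\ref{thm:tranFrm}.
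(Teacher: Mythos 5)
Your proposal is correct and follows essentially the same route as the paper: the displayed ``fundamental calculation'' is exactly the paper's identity \eqref{eq:multFrm3} (your Schur-orthogonality computation is the paper's Peter--Weyl/Parseval expansion of $V_{f_j}g$ in the basis $\{\sqrt{d_\pi}\,\pi_{k,l}\}$, organized slightly differently), and both directions, including the rank-one test operator $\psi \mapsto \langle\psi,e\rangle c$ for (i)$\Rightarrow$(ii), match the paper's argument. The convergence issues you flag are handled in the paper by the same operator-norm and Cauchy--Schwarz estimates you indicate.
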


\begin{proof}
Fix $g,h \in \H_\rho$. We will denote $g = (g_\pi)_{\pi \in \hat{K}}$, with $g_\pi \in \H_\pi^{\oplus m_\pi}$, and $g_\pi = (g_i^\pi)_{i=1}^{m_\pi}$, with $g_i^\pi \in \H_\pi$. We use a similar notation for $h$. For each $\pi \in \hat{K}$, fix an orthonormal basis $e_1^\pi,\dotsc,e_{d_\pi}^\pi$ for $\H_\pi$, and let $\pi_{i,j} \in C(K)$ be the corresponding matrix elements. We are going to decompose $V_h g \in L^2(K)$ in the orthonormal basis $\{ \sqrt{d_\pi} \pi_{i,j} : \pi \in \hat{K}, 1 \leq i,j \leq d_\pi\}$. 

For any $\xi \in K$, we can use \eqref{eq:TRho} and the fact that $T$ is a conjugate-linear unitary to write
\[ \langle g, \rho(\xi) h \rangle = \langle T \rho(\xi) h, T g \rangle = \sum_{\pi \in \hat{K}} \langle (T_\pi h_\pi) \pi(\xi^{-1}), T_\pi g_\pi \rangle_{\HS} = \sum_{\pi \in \hat{K}} \sum_{k=1}^{d_\pi} \langle (T_\pi h_\pi) \pi(\xi^{-1}) e_k^\pi, (T_\pi g_\pi) e_k^\pi \rangle \]
\[ = \sum_{\pi \in \hat{K}} \sum_{k=1}^{d_\pi} \sum_{i=1}^{m_\pi} \langle \pi(\xi^{-1}) e_k^\pi, h_i^\pi \rangle \langle g_i^\pi, e_k^\pi \rangle =  \sum_{\pi \in \hat{K}} \sum_{k=1}^{d_\pi} \sum_{i=1}^{m_\pi} \sum_{l=1}^{d_\pi} \langle e_l^\pi, h_i^\pi \rangle \langle \pi(\xi^{-1}) e_k^\pi, e_l^\pi \rangle \langle g_i^\pi, e_k^\pi \rangle \]
\[ =  \sum_{\pi \in \hat{K}} \sum_{k=1}^{d_\pi} \sum_{i=1}^{m_\pi} \sum_{l=1}^{d_\pi} \langle e_l^\pi, h_i^\pi \rangle \langle g_i^\pi, e_k^\pi \rangle \overline{\pi_{k,l}(\xi)}. \]
By using the inequalities $| \langle e_l^\pi, h_i^\pi \rangle |^2 \leq \Norm{h_i^\pi}^2$, $| \langle g_i^\pi, e_k^\pi \rangle |^2 \leq \Norm{g_i^\pi}^2$, and $|\pi_{k,l}(\xi)|\leq 1$, one can easily show that 
\[ \sum_{i=1}^{m_\pi} | \langle e_l^\pi, h_i^\pi \rangle \langle g_i^\pi, e_k^\pi \rangle \overline{\pi_{k,l}(\xi)} | \leq \Norm{h_\pi} \Norm{g_\pi} < \infty \qquad (\pi \in \hat{K};\ k,l=1,\dotsc,d_\pi). \]
Thus, we can reorder the sum above to write
\[ \langle g, \rho(\xi) h \rangle = \sum_{\pi \in \hat{K}} \sum_{k,l=1}^{d_\pi} \left( \frac{1}{\sqrt{d_\pi}} \sum_{i=1}^{m_\pi} \langle e_l^\pi, h_i^\pi \rangle \langle g_i^\pi, e_k^\pi \rangle \right) \sqrt{d_\pi}\, \overline{\pi}_{k,l}(\xi). \]

We want to apply the Peter-Weyl Theorem to conclude that
\begin{equation}\label{eq:multFrm1}
\int_K | \langle g, \rho(\xi) h \rangle |^2 d\xi = \sum_{\pi \in \hat{K}} \frac{1}{d_\pi} \sum_{k,l=1}^{d_\pi} \left| \sum_{i=1}^{m_\pi} \langle e_l^\pi, h_i^\pi \rangle \langle g_i^\pi, e_k^\pi \rangle \right|^2.
\end{equation}
To justify \eqref{eq:multFrm1}, it suffices to prove the sum on the right is finite. To see this is the case, first observe that for $\pi \in \hat{K}$,
\[ \sum_{i=1}^{m_\pi} \langle e_l^\pi, h_i^\pi \rangle \langle g_i^\pi, e_k^\pi \rangle = \langle (T_\pi h_\pi) e_l^\pi, (T_\pi g_\pi) e_k^\pi \rangle \qquad (k,l=1,\dotsc,d_\pi). \]
Denoting $\Norm{\cdot}_{\text{op}}$ for the operator norm, we have
\[ \frac{1}{d_\pi} \sum_{k,l=1}^{d_\pi} | \langle (T_\pi h_\pi) e_l^\pi, (T_\pi g_\pi) e_k^\pi \rangle |^2 = \frac{1}{d_\pi} \sum_{l=1}^{d_\pi} \Norm{ (T_\pi g_\pi)^* (T_\pi h_\pi) e_l^\pi }^2 \leq \Norm{ (T_\pi g_\pi)^* (T_\pi h_\pi) }_{\text{op}}^2 \]
\[  \leq \Norm{ T_\pi g_\pi }_{\text{op}}^2 \Norm{ T_\pi h_\pi }_{\text{op}}^2 \leq \Norm{ T_\pi g_\pi }_{\HS}^2 \Norm{ T_\pi h_\pi }_{\HS}^2. \]
Since $\Norm{g}^2 = \sum_{\pi \in \hat{K}} \Norm{ T_\pi g_\pi }_{\HS}^2$, there is some $M > 0$ such that $\Norm{ T_\pi g_\pi }_{\HS}^2 \leq M$ for all $\pi \in \hat{K}$. Hence,
\[ \sum_{\pi \in \hat{K}} \frac{1}{d_\pi} \sum_{k,l=1}^{d_\pi} \left| \sum_{i=1}^{m_\pi} \langle e_l^\pi, h_i^\pi \rangle \langle g_i^\pi, e_k^\pi \rangle \right|^2 \leq \sum_{\pi \in \hat{K}} \Norm{ T_\pi g_\pi }_{\HS}^2 \Norm{ T_\pi h_\pi }_{\HS}^2 \leq M \Norm{h}^2 < \infty. \]
This proves \eqref{eq:multFrm1}.

We continue by refining the expression on the right side of \eqref{eq:multFrm1} even further. For $\pi \in \hat{K}$ and $k \in \{1,\dotsc,d_\pi\}$, we claim that
\begin{equation} \label{eq:multFrm2}
\sum_{l=1}^{d_\pi} \left| \sum_{i=1}^{m_\pi} \langle e_l^\pi, h_i^\pi \rangle \langle g_i^\pi, e_k^\pi \rangle \right|^2 = \Norm{ \sum_{i=1}^{m_\pi} \langle e_k^\pi, g_i^\pi \rangle h_i^\pi }^2. 
\end{equation}
Indeed, we can write
\[ \sum_{l=1}^{d_\pi} \left| \sum_{i=1}^{m_\pi} \langle e_l^\pi, h_i^\pi \rangle \langle g_i^\pi, e_k^\pi \rangle \right|^2 = \sum_{l=1}^{d_\pi} \sum_{i=1}^{m_\pi} \sum_{j=1}^{m_\pi} \langle e_l^\pi, h_i^\pi \rangle \langle g_i^\pi, e_k^\pi \rangle \langle h_j^\pi, e_l^\pi \rangle \langle e_k^\pi, g_j^\pi \rangle \]
\[ = \sum_{i=1}^{m_\pi} \sum_{j=1}^{m_\pi} \langle g_i^\pi, e_k^\pi \rangle \langle e_k^\pi, g_j^\pi \rangle \sum_{l=1}^{d_\pi} \langle e_l^\pi, h_i^\pi \rangle \langle h_j^\pi, e_l^\pi \rangle = \sum_{i=1}^{m_\pi} \sum_{j=1}^{m_\pi} \langle g_i^\pi, e_k^\pi \rangle \langle e_k^\pi, g_j^\pi \rangle \langle h_j^\pi, h_i^\pi \rangle \]
\[ = \sum_{i=1}^{m_\pi} \sum_{j=1}^{m_\pi} \langle \langle e_k^\pi, g_j^\pi \rangle h_j^\pi, \langle e_k^\pi, g_i^\pi \rangle h_i^\pi \rangle. \]
Since $| \langle e_k^\pi, g_i^\pi \rangle |^2 \leq \Norm{g_i^\pi}^2$, one can show that $\sum_{i=1}^{m_\pi} \Norm{ \langle e_k^\pi, g_i^\pi \rangle h_i^\pi } \leq \Norm{ g_\pi} \Norm{ h_\pi } < \infty$. Hence the sum $\sum_{i=1}^{m_\pi} \langle e_k^\pi, g_i^\pi \rangle h_i^\pi$ converges in $\H_\pi$. That means we can move the sums inside the inner product above. This gives \eqref{eq:multFrm2}.

Combining \eqref{eq:multFrm1} with \eqref{eq:multFrm2}, and letting $h$ run through $\A$, we obtain the critical identity
\begin{equation} \label{eq:multFrm3}
\sum_{j\in I} \int_K | \langle g, \rho(\xi) f_j \rangle |^2 d\xi = \sum_{\pi \in \hat{K}} \sum_{k=1}^{d_\pi} \frac{1}{d_\pi} \sum_{j\in I} \Norm{ \sum_{i=1}^{m_\pi} \langle e_k^\pi, g_i^\pi \rangle f_{i,j}^\pi }^2 \qquad (g \in \H_\rho). 
\end{equation}
Meanwhile,
\begin{equation} \label{eq:multFrm4}
\Norm{g}^2 = \sum_{\pi \in \hat{K}} \sum_{k=1}^{d_\pi} \sum_{i=1}^{m_\pi} | \langle e_k^\pi, g_i^\pi \rangle |^2 \qquad (g \in \H_\rho).
\end{equation}

The rest of the proof comes easily. If $g \in V_J$, then $( \langle e_k^\pi, g_i^\pi \rangle )_{i=1}^{m_\pi} \in \ran T_\pi g_\pi \subset J(\pi)$ for every $\pi \in \hat{K}$ and every $k \in \{1,\dotsc,d_\pi\}$. Thus, (ii) implies (i). 

Now assume (i) holds. Fix $\pi \in \hat{K}$, and let $(c_i)_{i=1}^{m_\pi} \in J(\pi)$ be arbitrary. Define $g \in \H_\rho$ by
\[ g_i^\sigma = \begin{cases}
\overline{c_i} e_1^\pi, & \text{ if } \sigma = \pi \\
0, & \text{ if }\sigma \neq \pi
\end{cases}  \qquad (\sigma \in \hat{K},\ 1 \leq i \leq m_\sigma ). \]
Then 
\[ \Norm{g}^2 = \sum_{i=1}^{m_\pi} |c_i|^2, \]
while \eqref{eq:multFrm3} gives
\[ \sum_{j\in I} \int_K | \langle g, \rho(\xi) f_j \rangle |^2 d\xi = \frac{1}{d_\pi} \sum_{j\in I} \Norm{ \sum_{i=1}^{m_\pi} c_i f_{i,j}^\pi }^2. \]
Since $\ran (Tg)(\sigma) \subset J(\sigma)$ for each $\sigma \in \hat{K}$, (i) applies to tell us that
\[ A \sum_{i=1}^{m_\pi} | c_i |^2 \leq \frac{1}{d_\pi} \sum_{j\in I} \Norm{ \sum_{i=1}^{m_\pi} c_i f_{i,j}^\pi }^2 \leq B \sum_{i=1}^{m_\pi} |c_i|^2. \]
This is (ii).
\end{proof}

\medskip

\begin{cor} \label{cor:rowSum}
If $E(\A)$ is a continuous frame for $S(\A)$, then every row of $\A(\pi)$ belongs to $\H_\pi^{\oplus I}$, for every $\pi \in \hat{K}$.
\end{cor}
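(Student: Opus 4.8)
The plan is to isolate a single row of $\A(\pi)$ by substituting a carefully chosen test vector into the identity \eqref{eq:multFrm3}, which was established in the proof of Theorem \ref{thm:multFrm} for arbitrary $g \in \H_\rho$. The one extra ingredient is the upper (Bessel) bound. Although $E(\A)$ is only assumed to be a frame for $S(\A)$, every $\rho(\xi) f_j$ lies in $S(\A)$, so for any $g \in \H_\rho$ with orthogonal projection $Pg$ onto $S(\A)$ we have $\langle g, \rho(\xi) f_j \rangle = \langle Pg, \rho(\xi) f_j \rangle$. Hence the Bessel inequality holds on all of $\H_\rho$: there is a constant $B < \infty$ with $\sum_{j\in I} \int_K |\langle g, \rho(\xi) f_j \rangle|^2\, d\xi \leq B \Norm{Pg}^2 \leq B \Norm{g}^2$ for every $g \in \H_\rho$.

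Fix $\pi \in \hat{K}$ and an index $i_0$ with $1 \leq i_0 \leq m_\pi$; I want $\sum_{j \in I} \Norm{f_{i_0,j}^\pi}^2 < \infty$, which is exactly the statement that the $i_0$-th row of $\A(\pi)$ lies in $\H_\pi^{\oplus I}$. To pick out this row, define $g = (g_\sigma)_{\sigma \in \hat{K}} \in \H_\rho$ by setting $g_\sigma = 0$ for $\sigma \neq \pi$, and taking $g_\pi = (g_i^\pi)_{i=1}^{m_\pi}$ with $g_{i_0}^\pi = e_1^\pi$ and $g_i^\pi = 0$ for $i \neq i_0$. This is a genuine unit vector, $\Norm{g}^2 = 1$, and with respect to the basis $e_1^\pi,\dotsc,e_{d_\pi}^\pi$ one has $\langle e_k^\pi, g_i^\pi \rangle = \delta_{i,i_0}\delta_{k,1}$, so that $\sum_{i=1}^{m_\pi} \langle e_k^\pi, g_i^\pi \rangle f_{i,j}^\pi = \delta_{k,1}\, f_{i_0,j}^\pi$. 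Substituting into \eqref{eq:multFrm3} collapses the right-hand side: only the $\sigma = \pi$ block survives (because $g$ is supported there), and within it only $k=1$ contributes, leaving $\tfrac{1}{d_\pi} \sum_{j\in I} \Norm{f_{i_0,j}^\pi}^2$. Comparing with the Bessel bound applied to $g$ gives $\tfrac{1}{d_\pi} \sum_{j\in I} \Norm{f_{i_0,j}^\pi}^2 = \sum_{j\in I} \int_K |\langle g, \rho(\xi) f_j \rangle|^2\, d\xi \leq B$, so $\sum_{j\in I} \Norm{f_{i_0,j}^\pi}^2 \leq d_\pi B < \infty$. Since $\pi$ and $i_0$ were arbitrary, every row of every $\A(\pi)$ belongs to $\H_\pi^{\oplus I}$.

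The only point requiring care is that \eqref{eq:multFrm3} must be invoked \emph{before} the row sums are known to be finite. This is legitimate because both sides of \eqref{eq:multFrm3} are sums of nonnegative terms, so the identity is valid in $[0,+\infty]$ by Tonelli's theorem; it is precisely the finite Bessel bound that upgrades the right-hand side from a priori possibly infinite to finite. Everything else is bookkeeping of which terms survive for the chosen $g$, so I do not expect any serious obstacle beyond this interpretive subtlety.
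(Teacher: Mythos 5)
Your proof is correct, and it takes a genuinely different route from the paper's. The paper fixes $i_0$ and the rank-one range function $J$ with $J(\pi)=\spn\{\delta_{i_0}\}$, projects $E(\A)$ onto the irreducible invariant subspace $V_J$, invokes the dichotomy that the projection of $S(\A)$ onto $V_J$ has range either $\{0\}$ or all of $V_J$, and in the nontrivial case applies the \emph{statement} of Theorem \ref{thm:multFrm} (direction (i)$\Rightarrow$(ii), with $\delta_{i_0}$ as the test sequence) to the projected frame $\{\rho(\xi)Pf_j\}$. You instead bypass the projection and the irreducibility dichotomy entirely: you extend the Bessel bound from $S(\A)$ to all of $\H_\rho$ via $\langle g,\rho(\xi)f_j\rangle=\langle Pg,\rho(\xi)f_j\rangle$, and then feed the unit vector supported in the $(\pi,i_0)$ slot directly into the internal identity \eqref{eq:multFrm3}, which collapses to $\frac{1}{d_\pi}\sum_{j}\Norm{f^\pi_{i_0,j}}^2\leq B$. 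Your handling of the one delicate point is right: \eqref{eq:multFrm1} is established for each fixed $j$ with both $g$ and $h=f_j$ genuine elements of $\H_\rho$, so summing the resulting nonnegative equalities over $j$ yields \eqref{eq:multFrm3} as an identity in $[0,+\infty]$ with no prior finiteness assumption on the row sums (calling this ``Tonelli'' is mildly overkill, but the logic is sound). What each approach buys: the paper's argument uses Theorem \ref{thm:multFrm} only as a black box plus the standard fact that projecting a frame onto an invariant subspace yields a frame there, at the cost of a case split; yours is shorter and avoids the case split, at the cost of reaching inside the proof of Theorem \ref{thm:multFrm} for an identity that is not part of its statement.
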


\begin{proof}
Fix $\pi \in \hat{K}$. Let $i_0 \in \{1,\dotsc, m_\pi\}$ when $m_\pi < \infty$ and $i_0 \in \N$ when $m_\pi = \infty$. Denote $\delta_{i_0} \in \ell^2_{m_\pi}$ for the vector with a $1$ in the $i_0$-th coordinate and $0$ in all others, and let $J$ be the range function given by
\[ J(\sigma) = \begin{cases}
\spn\{\delta_{i_0}\}, & \text{if }\sigma = \pi \\
\{0\}, & \text{if } \sigma \neq \pi.
\end{cases} \]
Then $V_J$ is the $i_0$-th summand of $\H_\pi^{\oplus m_\pi} \subset \H_\rho$. Let $P\colon S(\A) \to V_J$ be the restriction to $S(\A)$ of the orthogonal projection $\H_\rho \to V_J$. Since $V_J$ is an invariant subspace of $\H_\rho$, $P$ commutes with $\rho(\xi)$ for every $\xi \in K$, and the range of $P$ is an invariant subspace of $V_J$.

Since $V_J$ is irreducible, one of two things must happen: either the range of $P$ is zero, or it is all of $V_J$. In the former case, we have $f_{i_0,j}^\pi = 0$ for all $j \in I$, so that the $i_0$-th row of $\A(\pi)$ equals $0 \in \H_\pi^{\oplus I}$. In the latter case, $\{P \rho(\xi) f_j \}_{j \in I, \xi \in K} = \{ \rho(\xi) P f_j \}_{j\in I, \xi \in K}$ is a continuous frame for $V_J$. Say the upper bound is $B > 0$. Applying Theorem \ref{thm:multFrm} with $\delta_{i_0}$ in place of $(c_i)_{i=1}^{m_\pi}$, we find that
\[ \sum_{j\in I} \Norm{ f_{i_0,j}^\pi }^2 \leq B d_\pi < \infty. \]
Thus, $(f_{i_0,j})_{j\in I} \in \H_\pi^{\oplus I}$.
\end{proof}

The work above assumes that $\A$ is countable and that our continuous frames $\{ \rho(\xi) f_j\}_{j \in I, \xi \in K}$ are taken over the measure space $I \times K$, where $I$ is equipped with counting measure. We have imposed this assumption only for the sake of clarity. Our arguments work just as well (with obvious modifications) if we replace $I$ with a $\sigma$-finite measure space $(X,\mu)$, and allow $\A = \{f_x\}_{x\in X}$ to be a possibly uncountable family of vectors. We have to assume, however, that the mapping $x \mapsto f_x$ is weakly measurable from $X$ to $\H_\rho$. We also have to replace the direct sum $\H_\pi^{\oplus I}$ with the direct \emph{integral} $\int_X^\oplus \H_\pi$. (See \cite[\S 7.4]{F} for a definition.) A standard measurability argument, which we omit, proves the mapping $X \times K \to \H_\rho$ given by $(x,\xi) \mapsto \rho(\xi) f_x$ is weakly measurable. We denote $E(\A)$ for this mapping. As in the countable case, we write $f_x = (f_x^\pi)_{\pi \in \hat{K}}$ with $f_x^\pi = (f_{i,x}^\pi)_{i=1}^{m_\pi} \in \H_\pi^{\oplus m_\pi}$ and $f_{i,x}^\pi \in \H_\pi$. Strictly speaking, 
\[ \A(\pi) := (f_{i,x}^\pi)_{1 \leq i \leq m_\pi, x \in X} \]
is no longer a matrix, but a sequence of mappings $X \to \H_\rho$, each given by $x \mapsto f_{i,x}^\pi$ for some $i$. For the sake of analogy, we will still call these mappings \emph{rows} of $\A(\pi)$. Then we have the following results.

\begin{theorem} \label{thm:multFrmInt}
Let $J$ be a range function in $\{ \ell^2_{m_\pi} \}_{\pi \in \hat{K}}$, and assume that $\A \subset V_J$. For constants $A$ and $B$ with $0 < A \leq B < \infty$, the following are equivalent.
\begin{enumerate}[(i)]
\item $E(\A)$ is a continuous frame for $V_J$ with bounds $A,B$. That is,
\[ A \Norm{g}^2 \leq \int_X \int_K | \langle g, \rho(\xi) f_x \rangle |^2\, d\xi\, d x \leq B \Norm{g}^2 \qquad (g \in V_J). \]
\item For every $\pi \in \hat{K}$ and every sequence $(c_i)_{i=1}^{m_\pi} \in J(\pi) \subset \ell^2_{m_\pi}$,
\[ d_\pi A \sum_{i=1}^{m_\pi} |c_i|^2 \leq \int_X \Norm{ \sum_{i=1}^{m_\pi} c_i f_{i,x}^\pi }^2 dx \leq d_\pi B \sum_{i=1}^{m_\pi} |c_i|^2. \]
\end{enumerate}
\end{theorem}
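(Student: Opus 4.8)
The plan is to run the proof of Theorem \ref{thm:multFrm} essentially verbatim, replacing the counting-measure sum $\sum_{j\in I}$ by the integral $\int_X\,dx$ and invoking Tonelli's theorem wherever the discrete proof interchanged a sum over $I$ with the sum over $\hat{K}$. The crucial observation is that the computation producing \eqref{eq:multFrm1} concerned only a single fixed pair $g,h\in\H_\rho$ and made no reference whatsoever to the indexing family $\A$; it rested solely on the intertwining identity \eqref{eq:TRho}, the fact that $T$ is a conjugate-linear unitary, and the Peter-Weyl Theorem. First I would reproduce that computation unchanged, taking $h=f_x$ for an arbitrary fixed $x\in X$, and then apply the purely algebraic identity \eqref{eq:multFrm2} to the inner double sum. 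This yields, for each fixed $x$, the pointwise identity
\[ \int_K |\langle g,\rho(\xi)f_x\rangle|^2\, d\xi = \sum_{\pi\in\hat{K}}\frac{1}{d_\pi}\sum_{k=1}^{d_\pi}\Norm{ \sum_{i=1}^{m_\pi}\langle e_k^\pi, g_i^\pi\rangle f_{i,x}^\pi }^2 \qquad (g\in\H_\rho). \]

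Next I would integrate this identity over $x\in X$. Because every term on the right is nonnegative, Tonelli's theorem permits the interchange of $\int_X$ with the countable sums over $\pi\in\hat{K}$ and over $k$, producing the integral form of the critical identity \eqref{eq:multFrm3},
\[ \int_X\int_K |\langle g,\rho(\xi)f_x\rangle|^2\, d\xi\, dx = \sum_{\pi\in\hat{K}}\frac{1}{d_\pi}\sum_{k=1}^{d_\pi}\int_X \Norm{ \sum_{i=1}^{m_\pi}\langle e_k^\pi, g_i^\pi\rangle f_{i,x}^\pi }^2 dx \qquad (g\in\H_\rho). \]
The norm formula \eqref{eq:multFrm4} is independent of $\A$ and so carries over without change. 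With these two identities the two implications follow exactly as before. For (ii) $\implies$ (i), if $g\in V_J$ then each sequence $(\langle e_k^\pi, g_i^\pi\rangle)_{i=1}^{m_\pi}$ lies in $\ran T_\pi g_\pi\subset J(\pi)$, so hypothesis (ii) bounds each inner integral between $d_\pi A$ and $d_\pi B$ times $\sum_i |\langle e_k^\pi, g_i^\pi\rangle|^2$; summing over $\pi,k$ and comparing with \eqref{eq:multFrm4} yields the frame bounds $A,B$. For (i) $\implies$ (ii), I fix $\pi$ and $(c_i)_{i=1}^{m_\pi}\in J(\pi)$ and build the same test vector $g$, with $g_i^\sigma=\overline{c_i}e_1^\pi$ when $\sigma=\pi$ and $g_i^\sigma=0$ otherwise; then $\Norm{g}^2=\sum_i |c_i|^2$, the integral identity collapses to $\tfrac{1}{d_\pi}\int_X\Norm{\sum_i c_i f_{i,x}^\pi}^2\,dx$, and the frame inequality (i) delivers (ii).

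The main obstacle is measurability, which is what legitimizes the passage from a sum to an integral and the use of Tonelli. I would lean on the weak measurability of $(x,\xi)\mapsto\rho(\xi)f_x$ guaranteed by the remark preceding the statement, from which the joint measurability of $(x,\xi)\mapsto|\langle g,\rho(\xi)f_x\rangle|^2$ follows, so that the left-hand iterated integral is well defined and $\sigma$-finiteness of $(X,\mu)$ makes Fubini-Tonelli applicable there. For the right-hand side I must check that each map $x\mapsto\Norm{\sum_i\langle e_k^\pi, g_i^\pi\rangle f_{i,x}^\pi}^2$ is measurable; this follows because $x\mapsto f_{i,x}^\pi$ is weakly measurable for each fixed $i,\pi$ and fixed $g$, so the finite combination is weakly measurable and its norm is measurable. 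Once measurability is secured, nonnegativity of every integrand makes the interchange automatic, and no integrability hypothesis beyond $\sigma$-finiteness is needed. Since all of this measurability is precisely the ``standard argument'' flagged (and omitted) in the remark, the remaining content of the proof is the identity chain above, which is identical in form to the countable case. \qed
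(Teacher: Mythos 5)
Your proposal is correct and matches the paper's intended argument exactly: the paper itself states that Theorem \ref{thm:multFrmInt} follows from the proof of Theorem \ref{thm:multFrm} ``with obvious modifications,'' and the modifications you supply---fixing $h=f_x$, integrating the pointwise identity over $X$, invoking Tonelli via nonnegativity, and checking measurability of the integrands from the assumed weak measurability of $x\mapsto f_x$---are precisely the right ones. No gaps.
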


\begin{cor} \label{cor:multFrmInt}
The following are equivalent for constants $A$ and $B$ with $0 < A \leq B < \infty$.
\begin{enumerate}[(i)]
\item $E(\A)$ is a continuous frame for $\H_\rho$ with bounds $A,B$.
\item For every $\pi \in \hat{K}$, the ``rows'' of $\A(\pi)$ belong to $\int_X^\oplus \H_\pi$, where they form a Riesz sequence with bounds $d_\pi A,d_\pi B$.
\end{enumerate}
\end{cor}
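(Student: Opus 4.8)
The plan is to obtain Corollary~\ref{cor:multFrmInt} as a direct specialization of Theorem~\ref{thm:multFrmInt} to the maximal range function, exactly as Theorem~\ref{thm:multFrm2} is a specialization of Theorem~\ref{thm:multFrm} in the countable case. First I would take $J$ to be the range function with $J(\pi) = \ell^2_{m_\pi}$ for every $\pi \in \hat{K}$, for which $V_J = \H_\rho$ and the hypothesis $\A \subset V_J$ is automatic. With this choice, Theorem~\ref{thm:multFrmInt} asserts that $E(\A)$ is a continuous frame for $\H_\rho$ with bounds $A,B$ if and only if, for every $\pi \in \hat{K}$ and every $(c_i)_{i=1}^{m_\pi} \in \ell^2_{m_\pi}$,
\[ d_\pi A \sum_{i=1}^{m_\pi} |c_i|^2 \leq \int_X \Norm{ \sum_{i=1}^{m_\pi} c_i f_{i,x}^\pi }^2\, dx \leq d_\pi B \sum_{i=1}^{m_\pi} |c_i|^2. \]
The whole task then reduces to recognizing this last condition as the statement that the rows of each $\A(\pi)$ form a Riesz sequence in $\int_X^\oplus \H_\pi$ with bounds $d_\pi A, d_\pi B$.

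To make that identification, I would unpack the inner product on $\int_X^\oplus \H_\pi$: a finite linear combination $\sum_i c_i \cdot (\text{row } i)$ of the rows $x \mapsto f_{i,x}^\pi$ has squared norm $\int_X \Norm{ \sum_i c_i f_{i,x}^\pi }^2\, dx$, so the displayed two-sided inequality is verbatim the Riesz-sequence inequality with the claimed bounds. Two small gaps remain to be closed. One is that the Riesz condition is stated only for finitely supported coefficient sequences, whereas Theorem~\ref{thm:multFrmInt} quantifies over all of $\ell^2_{m_\pi}$; these are equivalent by the remark following the definition of Riesz sequence, which records that the finite-support inequality extends automatically to all of $\ell^2_{m_\pi}$. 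The other is the membership requirement: to even speak of a Riesz sequence the rows must lie in $\int_X^\oplus \H_\pi$, i.e. $\int_X \Norm{f_{i,x}^\pi}^2\, dx < \infty$. In the direction (i)$\Rightarrow$(ii) this is supplied by specializing to $(c_i) = \delta_{i_0}$ in the upper bound above---the direct-integral version of the argument already carried out in Corollary~\ref{cor:rowSum}---while in the direction (ii)$\Rightarrow$(i) it is part of the hypothesis.

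I do not expect a genuine obstacle here: all the analytic content is carried by Theorem~\ref{thm:multFrmInt}, and what remains is the dictionary between its condition (ii) and the language of Riesz sequences. The one point deserving care is the membership and finite-support bookkeeping just described, since the two formulations of condition (ii) become literally identical only after invoking both the automatic extension of the Riesz inequality and the finiteness of the row norms. I would therefore state these two facts explicitly, so that the equivalence of the two conditions is a genuine equality of hypotheses rather than merely a morally evident one.
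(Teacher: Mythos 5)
Your proposal is correct and is exactly the route the paper intends: Corollary~\ref{cor:multFrmInt} is obtained from Theorem~\ref{thm:multFrmInt} by taking the maximal range function $J(\pi) = \ell^2_{m_\pi}$, so that $V_J = \H_\rho$, and then translating condition (ii) of the theorem into the Riesz-sequence language (with the finite-support extension noted after the definition of Riesz sequence, and row membership in $\int_X^\oplus \H_\pi$ coming from the upper bound at $(c_i)=\delta_{i_0}$, which is even simpler here than in Corollary~\ref{cor:rowSum} since no projection onto an irreducible summand is needed). The two bookkeeping points you flag are precisely the only content beyond the theorem, and you close them correctly.
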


We end with an application. Remember that $\mathcal{M}_\pi \subset \H_\rho$ denotes the isotypical component of $\pi\in \hat{K}$ in $\rho$. In terms of our decomposition of $\H_\rho$, $\mathcal{M}_\pi$ is the summand $\H_\pi^{\oplus m_\pi} \subset \H_\rho$. We write $P_\pi$ for orthogonal projection of $\H_\rho$ onto $\mathcal{M}_\pi$. The result below generalizes Corollary \ref{cor:compFrm} for frames with multiple generators. It is a trivial consequence of Corollary \ref{cor:multFrmInt}.

\begin{cor} \label{cor:multCompFrm}
Let $\A$ be as described in the paragraph above Theorem \ref{thm:multFrmInt}. The following are equivalent for constants $A$ and $B$ with $0 < A \leq B < \infty$.
\begin{enumerate}[(i)]
\item $E(\A)$ is a continuous frame for $\H_\rho$ with bounds $A,B$.
\item For each $\pi \in \hat{K}$, the mapping $X \times K \to \mathcal{M}_\pi$ given by $(x,\xi) \mapsto \rho(\xi) P_\pi f_x$ is a continuous frame for $\mathcal{M}_\pi$ with bounds $A,B$.
\end{enumerate}
\end{cor}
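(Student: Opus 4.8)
The plan is to deduce this directly from Corollary \ref{cor:multFrmInt}, applied once to the ambient representation $\rho$ on $\H_\rho$ and once to each restricted representation $\rho^{\mathcal{M}_\pi}$ on the isotypical component $\mathcal{M}_\pi$. The only real content is the bookkeeping observation that the ``rows'' appearing in Corollary \ref{cor:multFrmInt} are insensitive to whether we feed in the full family $\A$ or its projection onto a single isotypical component.

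First I would record the elementary fact that $P_\pi$ commutes with every $\rho(\xi)$, since $\mathcal{M}_\pi$ is an invariant subspace; hence $\rho(\xi) P_\pi f_x = P_\pi \rho(\xi) f_x$, and the mapping in statement (ii) is exactly $E(\{P_\pi f_x\}_{x\in X})$ for the subrepresentation $\rho^{\mathcal{M}_\pi}$. In the decomposition $\H_\rho = \bigoplus_{\sigma \in \hat{K}} \H_\sigma^{\oplus m_\sigma}$ we have $\mathcal{M}_\pi = \H_\pi^{\oplus m_\pi}$, so $P_\pi f_x$ has $\pi$-component $f_x^\pi = (f_{i,x}^\pi)_{i=1}^{m_\pi}$ and vanishing components in every other slot. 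Consequently the array attached to $\{P_\pi f_x\}$ for the subrepresentation $\rho^{\mathcal{M}_\pi}$ has empty row-set in every coordinate $\sigma \neq \pi$ (where the multiplicity is $0$), while in the coordinate $\pi$ its rows are precisely the rows of $\A(\pi)$, namely the mappings $x \mapsto f_{i,x}^\pi$ for $1 \leq i \leq m_\pi$.

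Next I would apply Corollary \ref{cor:multFrmInt} to $\rho^{\mathcal{M}_\pi}$. Since the multiplicity of $\sigma$ in $\rho^{\mathcal{M}_\pi}$ is $m_\pi$ for $\sigma = \pi$ and $0$ otherwise, condition (ii) of that corollary collapses to the single requirement that the rows of $\A(\pi)$ lie in $\int_X^\oplus \H_\pi$ and form a Riesz sequence with bounds $d_\pi A, d_\pi B$. Thus, for each fixed $\pi$, the mapping $(x,\xi) \mapsto \rho(\xi) P_\pi f_x$ is a continuous frame for $\mathcal{M}_\pi$ with bounds $A, B$ if and only if the rows of $\A(\pi)$ form such a Riesz sequence. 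On the other hand, applying Corollary \ref{cor:multFrmInt} to $\rho$ itself, $E(\A)$ is a continuous frame for $\H_\rho$ with bounds $A, B$ if and only if for every $\pi \in \hat{K}$ the rows of $\A(\pi)$ form a Riesz sequence with bounds $d_\pi A, d_\pi B$. Quantifying over $\pi$ and combining the two characterizations yields the desired equivalence (i) $\iff$ (ii).

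Since both directions are mediated by the same per-$\pi$ Riesz-sequence condition, there is no genuine obstacle here; the statement is, as advertised, a trivial consequence of Corollary \ref{cor:multFrmInt}. The one point requiring a moment's care is the claim that restricting to $\mathcal{M}_\pi$ leaves the $\pi$-row-set of the associated array unchanged, so that the direct-integral Riesz condition genuinely factors through the isotypical components. This is immediate from the block structure $\H_\rho = \bigoplus_{\sigma \in \hat{K}} \H_\sigma^{\oplus m_\sigma}$, but it is the hinge on which the whole argument turns.
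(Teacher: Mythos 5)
Your proposal is correct and follows the same route as the paper, which states only that the result ``is a trivial consequence of Corollary \ref{cor:multFrmInt}''; your write-up simply makes explicit the bookkeeping the paper leaves implicit, namely that applying Corollary \ref{cor:multFrmInt} to each subrepresentation $\rho^{\mathcal{M}_\pi}$ reproduces exactly the per-$\pi$ Riesz condition obtained by applying it to $\rho$ itself.
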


\medskip

%ACKNOWLEDGEMENTS========================================================================
\section{Acknowledgements}
The author thanks the following people for insightful comments and conversations: Marcin Bownik, Eusebio Gardella, Eugenio Hern{\'a}ndez, John Jasper, Peter Luthy, Azita Mayeli, Chris Phillips, Ken Ross, and Shayne Waldron. Extra thanks go to Marcin Bownik and Ken Ross, who both read the manuscript and gave helpful suggestions. This research was supported in part by NSF grant DMS-1265711.

%BIBLIOGRAPHY===============================================================================

\bibliographystyle{abbrv}
\bibliography{comGrp}

\vspace{10 pt}

\end{document}